\chardef\coloryes=1 
\chardef\isitdraft=0
   \def\eqref#1{({\ref{#1}})}                
\definecolor{refkey}{rgb}{.3,0.3,0.3}
  \definecolor{refkey}{rgb}{.8,0.8,0.1}
  \definecolor{labelkey}{rgb}{.9,0.6,0.1}
  \def\startnewsection#1#2{\section{#1}\label{#2}\setcounter{equation}{0}}   
\begin{document}
\def\Dg{{D'g}}
\def\ua{u^{\alpha}}
\def\intint{\int\!\!\!\!\int}
\def\intinttext{\int\!\!\!\int}
\def\intintint{\int\!\!\!\!\int\!\!\!\!\int}
\def\intintintint{\int\!\!\!\!\int\!\!\!\!\int\!\!\!\!\int}
\def\ques{{\cor \underline{??????}\cob}}
\def\nto#1{{\coC \footnote{\em \coC #1}}}
\def\fractext#1#2{{#1}/{#2}}
\def\fracsm#1#2{{\textstyle{\frac{#1}{#2}}}}   
\def\baru{U}
\def\nnonumber{}
\def\palpha{p_{\alpha}}
\def\valpha{v_{\alpha}}
\def\qalpha{q_{\alpha}}
\newcommand\Gampl{\Gamma_{\textup{pl}}}
\newcommand\wtil{\tilde{w}}
\newcommand\Wtil{\tilde{W}}
\newcommand\vtil{\tilde{v}}
\def\walpha{w_{\alpha}}
\def\falpha{f_{\alpha}}
\def\dalpha{d_{\alpha}}
\def\galpha{g_{\alpha}}
\def\halpha{h_{\alpha}}
\def\psialpha{\psi_{\alpha}}
\def\psibeta{\psi_{\beta}}
\def\betaalpha{\beta_{\alpha}}
\def\gammaalpha{\gamma_{\alpha}}
\def\Talpha{T}
\def\TTalpha{T_{\alpha}}
\def\TTalphak{T_{\alpha,k}}
\def\falphak{f^{k}_{\alpha}}
\def\R{\mathbb R}

\newcommand {\Dn}[1]{\frac{\partial #1  }{\partial N}}
\def\mm{m}

\def\cor{{}}
\def\cog{{}}
\def\cob{{}}
\def\coe{{}}
\def\coA{{}}
\def\coB{{}}
\def\coC{{}}
\def\coD{{}}
\def\coE{{}}
\def\coF{{}}

\ifnum\coloryes=1

  \definecolor{coloraaaa}{rgb}{0.1,0.2,0.8}
  \definecolor{colorbbbb}{rgb}{0.1,0.7,0.1}
  \definecolor{colorcccc}{rgb}{0.8,0.3,0.9}
  \definecolor{colordddd}{rgb}{0.0,.5,0.0}
  \definecolor{coloreeee}{rgb}{0.8,0.3,0.9}
  \definecolor{colorffff}{rgb}{0.8,0.3,0.9}
  \definecolor{colorgggg}{rgb}{0.5,0.0,0.4}
  \definecolor{colorhhhh}{rgb}{0.6,0.6,0.6}

 \def\cog{\color{colordddd}}
 \def\coy{\color{colorhhhh}}
 \def\cogray{\color{colorhhhh}}
 \def\cob{\color{black}}

 \def\coe{\color{blue}}
 \def\cor{\color{red}}

 \def\coA{\color{coloraaaa}}
 \def\coB{\color{colorbbbb}}
 \def\coC{\color{colorcccc}}
 \def\coD{\color{colordddd}}
 \def\coF{\color{colorffff}}
 \def\coG{\color{colorgggg}}

\fi
\ifnum\isitdraft=1
   \chardef\coloryes=1 
   \baselineskip=17pt
   \input macros.tex
   \def\blackdot{{\color{red}{\hskip-.0truecm\rule[-1mm]{4mm}{4mm}\hskip.2truecm}}\hskip-.3truecm}
   \def\bdot{{\coC {\hskip-.0truecm\rule[-1mm]{4mm}{4mm}\hskip.2truecm}}\hskip-.3truecm}
   \def\purpledot{{\coA{\rule[0mm]{4mm}{4mm}}\cob}}
   \def\pdot{\purpledot}
  \definecolor{labelkey}{rgb}{.5,0.1,0.1}
\else
   \baselineskip=15pt
   \def\blackdot{{\rule[-3mm]{8mm}{8mm}}}
   \def\purpledot{{\rule[-3mm]{8mm}{8mm}}}
   \def\pdot{}
\fi

\def\tdot{\fbox{\fbox{\bf\tiny\coe I'm here; \today \ \currenttime}}}
\def\nts#1{{\hbox{\bf ~#1~}}} 
\def\nts#1{{\cor\hbox{\bf ~#1~}}} 
\def\ntsf#1{\footnote{\hbox{\bf ~#1~}}} 
\def\ntsf#1{\footnote{\cor\hbox{\bf ~#1~}}} 
\def\bigline#1{~\\\hskip2truecm~~~~{#1}{#1}{#1}{#1}{#1}{#1}{#1}{#1}{#1}{#1}{#1}{#1}{#1}{#1}{#1}{#1}{#1}{#1}{#1}{#1}{#1}\\}
\def\biglineb{\bigline{$\downarrow\,$ $\downarrow\,$}}
\def\biglinem{\bigline{---}}
\def\biglinee{\bigline{$\uparrow\,$ $\uparrow\,$}}

\newcommand{\Dnu}{ \partial_{\nu}}
\renewcommand{\Gampl}{\Gamma_{pl}}
\def\mbar{{\overline M}}
\def\tilde{\widetilde}
\definecolor{darkgreen}{rgb}{0.,0.6,0.4}
\definecolor{darkcyan}{rgb}{0.,0.7,0.7}
\definecolor{darkpurple}{rgb}{0.4,0,0.4}
\def\ntAT#1{{\textcolor{blue}{\bf ~#1~}}} 
\def\ntBK#1{{\textcolor{darkgreen}{\bf ~#1~}}} 
\def\colAT#1{\textcolor{blue}{ ~#1~}} 
\def\colBK#1{\textcolor{darkgreen}{ ~#1~}} 
\def\chgd#1{\textcolor{darkpurple}{ ~#1~}} 
\def\rmv#1{\textcolor{green}{ ~#1~}} 
\def\damppl#1{#1} 
\def\Delpl{\Delta_{pl}}
\def\ff{\mathfrak{f}}
\def\gg{\mathfrak{g}}
\def\Ep{\mathcal{E}^p}
\def\Ew{\mathcal{E}^w}
\def\zeroT{0,T;}
\def\HminustwoGampl{H^2(\Gampl)^*}
\newtheorem{Theorem}{Theorem}[section]
\newtheorem{Corollary}[Theorem]{Corollary}
\newtheorem{Proposition}[Theorem]{Proposition}
\newtheorem{Lemma}[Theorem]{Lemma}
\newtheorem{Remark}[Theorem]{Remark}
\newtheorem{definition}{Definition}[section]
\def\theequation{\thesection.\arabic{equation}}
\def\endproof{\hfill$\Box$\\}
\def\square{\hfill$\Box$\\}
\def\comma{ {\rm ,\qquad{}} }            
\def\commaone{ {\rm ,\qquad{}} }         
\def\dist{\mathop{\rm dist}\nolimits}    
\def\sgn{\mathop{\rm sgn\,}\nolimits}    
\def\Tr{\mathop{\rm Tr}\nolimits}    
\def\div{\mathop{\rm div}\nolimits}    
\def\TT{R}
\def\supp{\mathop{\rm supp}\nolimits}    
\def\divtwo{\mathop{{\rm div}_2\,}\nolimits}    
\def\curl{\mathop{\rm curl}\nolimits}    
\def\dbar{\overline\partial}
\def\l{\langle}
\def\r{\rangle}
\def\plusdelta{+\delta}
\def\pd{+\delta}
\def\re{\mathop{\rm {\mathbb R}e}\nolimits}    
\def\indeq{\qquad{}\!\!\!\!}                     
\def\period{.}                           
\def\semicolon{\,;}                      
\newcommand{\cD}{\mathcal{D}}

\title{\mbox{Well-posedness of a Nonlinear Acoustics -- Structure Interaction Model}}

\author{ 
Barbara Kaltenbacher,
Amjad Tuffaha} 
\maketitle
\date{}
\bigskip

\bigskip
\indent Department of Mathematics\\
\indent University of Klagenfurt\\
\indent Klagenfurt, Austria\\
\indent e-mail: barbara.kaltenbacher@aau.at

\bigskip
\indent Department of Mathematics\\
\indent American University of Sharjah\\
\indent Sharjah, UAE\\
\indent e-mail: atufaha\char'100aus.edu

\bigskip
\begin{abstract}
We establish local-in-time and global in time well-posedness for small data, for a coupled system of nonlinear acoustic structure interactions. The model consists of the nonlinear Westervelt equation on a bounded domain with non homogeneous boundary conditions, coupled with a 4th order linear equation defined on a lower dimensional interface occupying part of the boundary of the domain, with transmission boundary conditions matching acoustic velocities and acoustic pressures. While the well-posedness of the Westervelt model has been well studied in the literature, there has been no works on the literature on the coupled structure acoustic interaction model involving the Westervelt equation. Another contribution of this work, is a novel variational weak formulation of the linearized system and a consideration of various boundary conditions.
\end{abstract}

\startnewsection{Introduction}{sec1}
In this paper, we study the interaction of a nonlinear acoustic fluid or gas with an elastic plate, a situation that is practically relevant, e.g., in the context of high intensity ultrasound; think, e.g., of a cavity enclosed by a thin wall \cite{BrownCox:2019} or a thin structure immersed in an ultrasound cleaning device \cite{Olson:1988}.

One of the classical models of nonlinear acoustics (and probably the most widely used one) is the Westervelt equation \cite{Westervelt63}, a quasilinear second order damped wave equation. For its analysis in the practically relevant case of a smooth bounded domain $\Omega$ 
we refer to, e.g., \cite{KL09Westervelt,KLV11_inhomDir,KL12_Neumann,MeyerWilke13,SimonettWilke2017}; the free space case $\Omega=\mathbb{R}^3$ is considered in, e.g., \cite{DekkersRozanova2019} and local in time results for vanishing damping can be found in, e.g., \cite{DoerflerGernerSchnaubelt:2016,DekkersRozanova2020,btozero}.
Advanced models such as 
Kuznetsov's equation \cite{Kuznetsov71},
the Jordan-Moore-Gibson-Thompson (JMGT) equation \cite{Jordan2014,MooreGibson1960,Thompson1972,RackeSaidHouari2020}
and the Blackstock-Crighton-Brunnhuber-Jordan (BCBJ) equation
\cite{Blackstock1963,Crighton1979,BrunnhuberJordan2016}
have been analyzed in, e.g., 
\cite{DekkersRozanova2019,DekkersRozanova2020,wellposednessBJ,KL12_Kuznetsov,KL12_Neumann,
KLM12_MooreGibson,KLP12_JordanMooreGibson,btozero,b2zeroJMGT,splittingModelsNonlinearAcoustics,
MeyerWilke13,MizohataUkai1993}.

For some recent work on the mathematical analysis  of acoustics-structure interaction problems we refer to, e.g., \cite{AvalosGeredeli2016,AvalosGeredeli2018,BecklinRammaha2021,Lasiecka2002,LasieckaRodrigues2021} and the references therein. Most of these models involve a linear or semi-linear wave equation coupled with linear and nonlinear plate models. Another  important aspect in the study of the mathematics of acoustics-structure interactions is control analysis, see for example \cite{BanksSmith1995, FahrooWang1999, Liu2022, YangYaoChen2018}.
However, to the best of the authors' knowledge, there are no results on well-posedness of nonlinear acoustics-structure interaction models involving the Westervelt equation.

The model we consider consists of the Westervelt equation defined on a bounded domain and coupled with a 4th order linear plate equation defined on a lower dimensional interface occupying a part of the boundary of the domain. The coupling is realized through the acoustic pressure which acts as a lifting force on the structural plate equation, in addition to a velocity matching condition  involving the normal pressure gradient. We also consider the possibility of mixed type non-homogeneous boundary conditions on the rest of the boundary such as Dirichlet, Neumann, and absorbing boundary conditions. Well-posedness of the Westervelt equations under these various mixed boundary conditions were considered in \cite{DekkersRozanovaTeplyaev} with vanishing boundary data.
Considering non-zero boundary data, we aim at laying the foundations for practically relevant control problems in future research.

Our main result in this paper, is firstly the existence of energy level weak solutions for the linearized model, and secondly the existence of local and global-in-time smooth solutions for the full nonlinear model. In particular, we provide a novel variational formulation of the coupled problem, that allows us to analyze the linearized problem (Proposition~\ref{prop:wellposed_lin_weak} and Corollary~\ref{cor:wellposed_lin}). Using this together with a fixed point argument, we establish local and global in time well-posedness for sufficiently small data (Theorem~\ref{theo:wellposed}).
Challenges in the analysis of the model arise due to the well-known potential degeneracy of the 
Westervelt equation (which it shares with basically all other models of nonlinear acoustics) in addition to the low regularity due to mixed boundary conditions. 
In particular, coupling to the plate provides less regularity than, e.g., prescribed  Dirichlet or Neumann boundary values would do. 
This requires a different approach as compared to the existing literature on the analysis of the Westervelt equation, e.g., \cite{KL09Westervelt,MeyerWilke2011,MizohataUkai1993}.
We note that in the mixed Dirichlet/Neumann boundary conditions, additional assumptions on the domain have to be imposed since classical elliptic regularity does not hold in general \cite{Plum1992, Savare1997}. This assumption allows us to control the $L^{\infty}$ norm of the pressure and hence avoid degeneracy without having to control the full Sobolev norm. 

The paper is organized as follows. Section~\ref{sec2} introduces the model and 
Section~\ref{sec:linmod} provides the analysis of its linearization.
In Section~\ref{sec:nl} we prove well-posedness of the nonlinear acoustics -- plate model and 
in Section~\ref{sec:outlook} we give a brief outlook on related further research questions.

\subsection{Notation}
We will use the function space 
$H^2_\Delta(\Omega):=\{u\in L^2(\Omega)\, : \, u\in L^2(\Omega), \, \Delta u\in L^2(\Omega)\}\supset H^2(\Omega)$

Moreover, we abbreviate by $C^\Omega_{\Gamma}$ the constant in the Poincar\'{e}-Friedrichs type inequality
\begin{equation}\label{PF}
\Vert v\Vert_{H^1(\Omega)}\leq C^\Omega_{\Gamma}
\Bigl(\Vert \nabla v \Vert_{L^2(\Omega)}^2+\Vert v \Vert_{L^2(\Gamma)}^2\Bigr)^{1/2} \text{ for all }v\in H^1(\Omega)
\end{equation}
for some regular boundary part $\Gamma\subseteq\partial\Omega$ with nonvanishing measure.

The norm of some continuous embedding $X(\Omega)\to Y(\Omega)$ for some Sobolev or Lebesgue spaces $X(\Omega)$, $X(\Omega)$ will be denoted by $C^\Omega_{X,Y}$.

We will sometimes drop the subscript in $\Vert\cdot\Vert_{L^2(\Omega)}$ (but not in the boundary norms, since this could cause confusion) and skip the trace operator whenever it is clear that we are taking some boundary trace of a function defined in $\Omega$.

The topological dual of a normed space $X$ will be denoted by  $X^{\star}$.
%

\startnewsection{Model}{sec2}

We consider a system of partial differential equations modeling the interaction between nonlinear acoustic waves and an isotropic, homogeneous plate. 
We take a domain $\Omega \subset \mathbb{R}^{3}$ 
with a $C^1$ boundary
$\partial \Omega$ consisting of four different parts $\Gampl$, $\Gamma_{a}$, $\Gamma_{D}$ and $\Gamma_{N}$, the plate coupling, absorbing, and excitation boundary parts, respectively.
Each of these boundary parts is assumed to be regular in the sense that it is either empty or has positive measure and the interfaces between them are assumed to be Lipschitz curves. An exemplary geometric setup is depicted in figure~\ref{fig:setup}.

\begin{figure}
\begin{center}
\includegraphics[width=0.6 \textwidth]{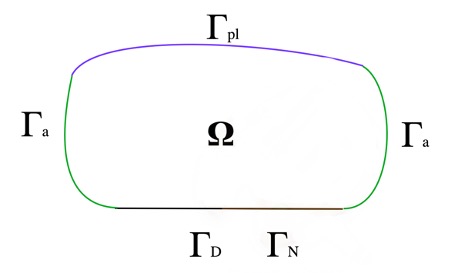}
\end{center}
\caption{Domain setup (example)\label{fig:setup}}
\end{figure}

The model consists of the Westervelt equation defined in the variable $p$ representing acoustic pressure 
 \begin{align}
\label{p}
(1 -  2k p) p_{tt}  - c^{2}\Delta p - b \Delta p_{t}  = 2k (p_{t})^{2} ~~\mbox{in}~~ \Omega \times [0,T]
\end{align}
and a $4$th order plate equation defined on $\Gampl$ in the mid surface displacement variable $w$, forced by the trace of the pressure variable $p$
\begin{align}
\label{pl}
\rho w_{tt} + \delta \Delpl^{2} w + \damppl{\beta(- \Delpl)^{\gamma} w_{t}} = \kappa p +h  ~~&\mbox{on}~~ \Gampl \times [0,T],
\end{align}
where $\Delpl$ is the Laplace-Beltrami operator on $\Gampl$, equipped with homogeneous Dirichlet boundary conditions, and $h$ is a given forcing function. \footnote{See, e.g., \cite{LasieckaRodrigues2021} for this type of coupling in a velocity potential formulation (rather than the pressure formulation we are using here).}  
Here, the constants $\rho>0$, $c>0$, $\delta>0$ represent the mass density, the speed of sound and the ratio of plate stiffness to plate thickness respectively, while $b>0$, $\beta>0$, and \damppl{
{$\gamma\geq0$}
} are damping parameters. The parameter $\kappa>0$ is a coupling coefficient and $k$ is the nonlinearity parameter of the acoustic medium.
\damppl{The operator $(- \Delpl)^{\gamma}$ is the spectrally defined power of the symmetric positive definite operator $(- \Delpl)$; for its definition, see, e.g., \cite[Section 2.1.1]{LischkeMeerschaert2020}.}

Accordingly, the boundary conditions we assume on the plate are of hinged type:
\begin{align}
\label{BCw}
 w = \Delpl w = 0 ~~ \mbox{on} ~~ \partial \Gampl \times [0,T].  
\end{align}

We also impose another boundary condition connecting the plate equation and the Westervelt equation which ensures continuity of inertial forces across the interface
\begin{align}
\label{BCp}
\Dnu{p} = - \rho w_{tt}  ~~&\mbox{on}~~ \Gampl \times [0,T].
\end{align}
Note that the pressure $p$ is related to the particle velocity $v$ through the (linearized) momentum balance
$ \rho_{fl} v_{t} =-  \nabla p$ where $\rho_{fl}$ is the mass density of the fluid. 

On the boundary component $\Gamma_{a}$, we impose the absorbing boundary conditions
\begin{align}
\label{BCa}
p_{t} +c\Dnu{p}=a  
~~&\mbox{on}~~ \Gamma_{a} \times [0,T]
\end{align}
where the typical value of the boundary function is $a\equiv0$ in order to avoid spurious reflections on the  boundary of the computational domain $\Omega$, see, e.g., the review
articles~\cite{GivoliBookChapter08,Hagstrom1999} and the references therein. 

Finally, we allow for mixed Dirichlet and/or Neumann boundary condition on the boundary components 
$\Gamma_{D}$ and $\Gamma_{N}$,
\begin{equation}
\label{BCe}
p=g_D  ~~\mbox{on}~~ \Gamma_{D} \times [0,T]
\end{equation} 
\begin{equation}
\label{BCeNeumann}
\Dnu{ p }=g_N  ~~\mbox{on}~~ \Gamma_{N} \times [0,T],
\end{equation} 
where $g_D$ and $g_N$ are given functions. 
Subsets with $g_D=0$ and $g_N=0$ correspond to sound-soft and sound-hard boundary parts, respectively; nonvanishing values of $g_D$ and $g_N$ represent an excitation control action; in case of Neumann conditions, e.g., by some piezoelectric transducer array in ultrasonics.

\begin{Remark}[damping]
While the damping term in the plate equation is not needed for part of the well-posedness analysis,
it is useful to achieve independence of estimates on the final time $T$ and to extract additional regularity of $p$ via the interface identity \eqref{BCptil}, see 
{Remark~\ref{rem:regularity}} below.

On the contrary, strong damping in the Westervelt equation is essential even for only local in time well-posedness, since due to the lack of spatial regularity resulting from the mixed boundary conditions, the techniques from, e.g., \cite{DoerflerGernerSchnaubelt:2016,btozero} would not be applicable here.
\end{Remark}

\section{Linearized Model}\label{sec:linmod}

\subsection{Reformulation, Linearization, and some Auxiliary Results}
We will reformulate the model in terms of $\wtil$ defined as 
\begin{align}
\wtil= w_{tt}
\end{align}
so that the boundary conditions on $\Gampl$ are written as 
\begin{align}
\label{wtil}
\rho\wtil_{tt} + \delta\Delpl^{2} \wtil +  \damppl{\beta(-\Delpl)^{\gamma} \wtil_{t}} = \kappa p_{tt} +\tilde{h}  ~~&\mbox{on}~~ \Gampl \times [0,T]
\end{align}
with $\tilde{h}=h_{tt}$ and
\begin{align}
\label{BCptil}
\Dnu{p} = - \rho \wtil  ~~&\mbox{in}~~ \Gampl \times [0,T].
\end{align}

We also start our analysis on a linear variable coefficient version of~\eqref{p}
 \begin{align}
\label{p_lin}
\alpha\, p_{tt}  - c^{2}\Delta p - b \Delta p_{t}  = f ~~\mbox{in}~~ \Omega \times [0,T]
\end{align}
(having in mind $\alpha = \alpha(t,x) =(1 -  2k p)$, $f=f(t,x)=2k (p_{t})^{2}$), thus altogether
\begin{equation}\label{pwlin}
\begin{aligned}
\alpha\, p_{tt}  - c^{2}\Delta p - b \Delta p_{t}  &= f  ~~\mbox{in}~~ \Omega \times [0,T] \\
 c\Dnu{ p}+p_{t} &= a  ~~\mbox{in}~~ \Gamma_{a} \times [0,T]  \\
 p&= g_D  ~~\mbox{on}~~ \Gamma_{D} \times [0,T] \\
\Dnu{ p}&=g_N ~~\mbox{on}~~ \Gamma_{N} \times [0,T] \\
\Dnu{ p} &= - \rho \wtil  ~~\mbox{on}~~ \Gampl \times [0,T] \\
\rho\wtil_{tt} + \delta\Delpl^{2} \wtil +  \damppl{\beta(-\Delpl)^{\gamma} \wtil_{t}} &= \kappa p_{tt} +\tilde{h}  ~~\mbox{on}~~ \Gampl \times [0,T] \\
\tilde{p}(0)&= p_{0}, ~~\tilde{p}_{t}(0)= p_{1}.
\end{aligned}
\end{equation}

We homogenize the Dirichlet and Neumann conditions on $\Gamma_{D}$, $\Gamma_{N}$ as well as the absorbing condition on $\Gamma_{a}$ in order to be able to consider $g_D=0$, $g_N=0$, $a=0$ in the following.
To this end, we decompose the solution $p$ into $\bar{p}$ and $\tilde{p}$ where $\bar{p}$ serves as an extension of the inhomogeneous boundary data $g_D$, $g_N$, $a$ to the interior by satisfying the initial boundary value problem IBVP
\begin{equation}\label{pbar}
\begin{aligned}
\alpha\, \bar{p}_{tt}  - c^{2}\Delta \bar{p} - b \Delta \bar{p}_{t}  &= 0 ~~\mbox{in}~~ \Omega \times [0,T] \\
c\Dnu{ \bar{p}} +\bar{p}_t&= a ~~\mbox{in}~~ \Gamma_{a} \times [0,T]  \\
 \bar{p}&= g_D  ~~\mbox{on}~~ \Gamma_{D} \times [0,T] \\
\Dnu{ \bar{p}}&=g_N ~~\mbox{on}~~ \Gamma_{N} \times [0,T] \\
\Dnu{ \bar{p}} &= 0  ~~\mbox{on}~~ \Gampl \times [0,T] \\
\bar{p}(0)&=0, ~~\bar{p}_{t}(0)=0.
\end{aligned}
\end{equation}
\begin{Lemma}\label{lem:extension}  
Under the assumptions 
$\alpha,\frac{1}{\alpha}\in L^\infty(\zeroT L^\infty(\Omega))$, 
$\alpha_t\in L^1(\zeroT L^\infty(\Omega))$ (with small enough norm), 
$\alpha_{tt}\in L^2(\zeroT L^3(\Omega))$, 
$\nabla\alpha\in L^2(\zeroT L^3(\Omega))$, 
$a_{ttt}\in L^2(0,t;L^2(\Gamma_a))$, 
$g_{D\,ttt}\in L^\infty(\zeroT H^{1/2}(\Gamma_D))$,
$g_{D\,tttt}\in L^1(\zeroT H^{1/2}(\Gamma_D))$, and
$g_{N\,ttt}\in W^{1,1}(\zeroT H^{-1/2}(\Gamma_N))$, 
there exists  a unique solution 
$\bar{p}\in W^{3,\infty}(\zeroT  H^1(\Omega)) \cap H^{3}(\zeroT  H^{2}_\Delta(\Omega))$ 
with $\textup{tr}_{\Gamma_a}\bar{p}_{tttt}\in L^{2}(\zeroT  L^{2}(\Gamma_a))$
to the IBVP \eqref{pbar}. 

In addition, for any $t\in(0,T)$, $\bar{p}$ satisfies the energy estimate
\begin{equation}\label{enest_eqp_pbar}
\Ep_1[\bar{p}](t)+\int_0^T\Ep_1[\bar{p}](s)\, ds \leq C\Bigl(\Ep_1[\bar{p}](0)
+ G[a,g_D,g_N] + G[a_{t},g_{D\,t},g_{N\,t}]
\Bigr),
\end{equation}
where $\Ep_1[\bar{p}]=\Ep[\bar{p}](t)+\Ep[\bar{p}_t](t)$, cf. \eqref{energy}, \eqref{E1}, and
\begin{equation}\label{G}
\begin{aligned}
&G[a,g_D,g_N]:= \Vert a_{t} \Vert_{L^2(0,t;L^2(\Gamma_a))}^2
+\Vert g_{D\,tt}\Vert_{L^2(\zeroT H^{1/2}(\Gamma_D))}^2
+\Vert g_{N\,t}\Vert_{W^{1,1}(0,t;H^{-1/2}(\Gamma_N))}^2
\end{aligned}
\end{equation}

The constant $C$ depends on the coefficients $b$, $c$, $\rho$, $\delta$, $\kappa$, 
$\Vert\frac{1}{\alpha}\Vert_{L^\infty(\zeroT L^\infty(\Omega))}$,  
$\Vert\alpha_t\Vert_{L^1(\zeroT L^\infty(\Omega))}$, but not directly on $T$.

\end{Lemma}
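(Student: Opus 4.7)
The plan is to reduce to homogeneous Dirichlet data on $\Gamma_D$ by a lifting, solve the resulting problem by a Galerkin scheme, and obtain the claimed regularity together with the energy estimate by a chain of four a priori estimates at the levels $\bar{p},\bar{p}_t,\bar{p}_{tt},\bar{p}_{ttt}$. First I would pick $\psi(t,\cdot)$ with $\psi|_{\Gamma_D}=g_D$ and $\psi|_{\partial\Omega\setminus\Gamma_D}=0$, of regularity dictated by the assumptions on $g_{D\,ttt}$ and $g_{D\,tttt}$, and set $q:=\bar{p}-\psi$. The problem for $q$ has homogeneous Dirichlet condition on $\Gamma_D$, modified source $F:=-\alpha\psi_{tt}+c^2\Delta\psi+b\Delta\psi_t$ in $\Omega$, and modified Neumann/absorbing data on $\Gamma_N$, $\Gampl$, $\Gamma_a$. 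It then suffices to solve this reduced problem in $V:=\{v\in H^1(\Omega):v|_{\Gamma_D}=0\}$.

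I would approximate by Galerkin using a basis of $V$ drawn from eigenfunctions of a self-adjoint elliptic operator with Dirichlet condition on $\Gamma_D$ and Neumann elsewhere; the finite-dimensional equations form a linear second-order ODE whose coefficient matrix is invertible thanks to $1/\alpha\in L^\infty$. To reach the stated regularity, I would differentiate the Galerkin equation $j=0,1,2,3$ times in $t$ and test with $\partial_t^{j+1}q^n$. The first level ($j=0$) yields the identity
\begin{equation*}
\tfrac12\tfrac{d}{dt}\bigl(\|\sqrt{\alpha}\,q_t^n\|_{L^2(\Omega)}^2+c^2\|\nabla q^n\|_{L^2(\Omega)}^2\bigr)+b\|\nabla q_t^n\|_{L^2(\Omega)}^2+c\|q_t^n\|_{L^2(\Gamma_a)}^2=\tfrac12\int_\Omega\alpha_t(q_t^n)^2+\text{r.h.s.},
\end{equation*}
the absorbing-boundary term arising when $c^2\Delta\bar{p}$ is integrated by parts and the relation $c\,\partial_\nu p+p_t=a$ is inserted. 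The drift $\tfrac12\int_\Omega\alpha_t(q_t^n)^2$ is absorbable by Gronwall thanks to the smallness of $\|\alpha_t\|_{L^1(\zeroT L^\infty(\Omega))}$, which is precisely what produces the $T$-independent constant. Higher-level estimates introduce commutators of the form $\int_0^t\int_\Omega\partial_t^{j_1}\alpha\cdot\partial_t^{j_2}q\cdot\partial_t^{j_3}q$ with $j_1+j_2+j_3\leq 5$, controlled by $\alpha_{tt}\in L^2(\zeroT L^3(\Omega))$, $\nabla\alpha\in L^2(\zeroT L^3(\Omega))$, the embedding $H^1(\Omega)\hookrightarrow L^6(\Omega)$, and the previously established lower-level bounds.

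The $H^2_\Delta$-regularity is then extracted directly from the equation: reading $c^2\Delta\bar{p}+b\Delta\bar{p}_t=\alpha\bar{p}_{tt}$ as a first-order ODE in $t$ for $\Delta\bar{p}$ with $L^2(\Omega)$-valued forcing yields $\Delta\bar{p}\in L^\infty(\zeroT L^2(\Omega))$, and successive time differentiations give $\Delta\partial_t^j\bar{p}\in L^2(\zeroT L^2(\Omega))$ for $j=1,2,3$. Passage to the limit $n\to\infty$ is standard by weak/weak-$\ast$ compactness and lower semicontinuity of norms; uniqueness is immediate from the same energy estimate applied to the difference of two solutions, the equation being linear.

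I expect the main obstacle to be closing the four-level estimate with a constant independent of $T$: since $\alpha$ enters every level through its space and time derivatives, the smallness of $\|\alpha_t\|_{L^1(\zeroT L^\infty(\Omega))}$ must be combined carefully with the damping $b\|\nabla\partial_t^jq\|_{L^2(\Omega)}^2$ and the absorbing-boundary dissipation $c\|\partial_t^jq\|_{L^2(\Gamma_a)}^2$ (via the Poincar\'{e}--Friedrichs inequality \eqref{PF}) to keep drift terms on the right-hand side absorbable. A secondary technical point is computing the initial values $\partial_t^jq^n(0)$ for $j=2,3$ from the equation and the initial/boundary data, which is where the hypotheses on $g_{D\,ttt}$, $g_{D\,tttt}$, $g_{N\,ttt}$, and $a_{ttt}$ become necessary.
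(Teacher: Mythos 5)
Your overall skeleton (Galerkin approximation plus a hierarchy of time-differentiated energy estimates, then recovery of the $\Delta$-regularity) is reasonable, but the specific hierarchy you propose cannot be closed under the hypotheses of the lemma, and this is a genuine gap rather than a technicality. With the standard multiplier $\partial_t^{j+1}q^n$ you gain only $H^1$-in-space information per level, so to reach $\nabla\bar{p}_{ttt}\in L^\infty(\zeroT L^2(\Omega))$ (needed for $W^{3,\infty}(\zeroT H^1(\Omega))$) and $\bar{p}_{tttt}\in L^2(\zeroT L^2(\Omega))$ (which your ODE recovery of $\Delta\bar{p}_{ttt}$ also requires) you must differentiate the equation three times in $t$, as you indeed propose ($j=3$). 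At that level the data assumptions are exceeded on every boundary part and in the coefficient: the $b$-damping boundary term on $\Gamma_a$ becomes $\tfrac{b}{c}\int_{\Gamma_a}(a_{tttt}-q_{ttttt})\,q_{tttt}\,dS$, so $a_{tttt}$ is needed while only $a_{ttt}\in L^2(\zeroT L^2(\Gamma_a))$ is assumed (integrating by parts in time instead produces the uncontrolled boundary trace of $q_{ttttt}$); on $\Gamma_N$ the $b$-part gives $b\int_{\Gamma_N}g_{N\,tttt}\,q_{tttt}\,dS$, which cannot be paired, since $g_{N\,tttt}$ is only in $L^1(\zeroT H^{-1/2}(\Gamma_N))$ while the trace of $q_{tttt}$ is controlled only in $L^2$, not $L^\infty$, in time; the term $\partial_t^3(\alpha q_{tt})$ contains $\alpha_{ttt}q_{tt}$, although no assumption on $\alpha_{ttt}$ is made; and your Dirichlet lifting pushes $\alpha\psi_{ttttt}$ --- i.e.\ five time derivatives of $g_D$ --- into the source $\partial_t^3F$, whereas only $g_{D\,tttt}\in L^1(\zeroT H^{1/2}(\Gamma_D))$ is available. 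So even if carried out, your argument yields an estimate in terms of strictly stronger data norms than the functionals $G$ in \eqref{G}, i.e.\ a weaker statement than claimed.

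The paper closes the estimate with only \emph{two} time differentiations by using the multipliers $-\frac{1}{\alpha}\Delta\bar{p}_{ttt}$ and $\frac{1}{\alpha}\bar{p}_{ttt}$ on $\partial_t^2$ of \eqref{pbar}, together with the analogous lower-order tests ($-\frac{1}{\alpha}\Delta\bar{p}_t$, $-\frac{1}{\alpha}\Delta\bar{p}_{tt}$) and equipartition tests ($-\frac{1}{\alpha}\Delta\bar{p}$, $-\frac{1}{\alpha}\Delta\bar{p}_t$) for the $\int_0^T\Ep_1$ part of \eqref{enest_eqp_pbar}. The multiplier $-\frac{1}{\alpha}\Delta\bar{p}_{ttt}$ simultaneously yields $\nabla\bar{p}_{ttt}\in L^\infty(\zeroT L^2(\Omega))$, $\Delta\bar{p}_{tt}\in L^\infty(\zeroT L^2(\Omega))$, $\sqrt{b}\,\Delta\bar{p}_{ttt}\in L^2(\zeroT L^2(\Omega))$ and the $\Gamma_a$-trace of $\bar{p}_{tttt}$, at the cost of exactly $a_{ttt}$, $g_{D\,tttt}$, $g_{N\,ttt}\in W^{1,1}$ and $\alpha_{tt}$; moreover the inhomogeneous Dirichlet data are not lifted but handled by $H^{1/2}$--$H^{-1/2}$ duality in the boundary term $\int_{\Gamma_D}g_{D\,tttt}\,\Dnu{\bar{p}_{ttt}}\,dS$, with $\Dnu{\bar{p}_{ttt}}$ controlled through the energy via \eqref{PF} and the trace estimate of Lemma~\ref{lem:div}; this is precisely what makes merely $L^1$-in-time $H^{1/2}$ Dirichlet data suffice. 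To salvage your route you would either have to strengthen the hypotheses by one time derivative of $a$, $g_N$, $g_D$ and $\alpha$, or switch to these $-\frac{1}{\alpha}\Delta(\cdot)$-type multipliers; your parabolic ODE recovery of $\Delta\bar{p},\Delta\bar{p}_t,\Delta\bar{p}_{tt}$ and your mechanism for $T$-independence (smallness of $\Vert\alpha_t/\alpha\Vert_{L^1(\zeroT L^\infty(\Omega))}$ combined with the $b$-damping, the $\Gamma_a$-dissipation and \eqref{PF}) are sound as far as they go.
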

The proof of the above lemma goes analogously to the proof of e.g., 
\cite[Theorem 1]{BK-Peichl}; 
its most crucial steps are given in the appendix.

The remaining part $\tilde{p}$ together with $\tilde{w}$ then satisfies the equation
\begin{equation}\label{pwtil}
\begin{aligned}
\alpha\, \tilde{p}_{tt}  - c^{2}\Delta \tilde{p} - b \Delta \tilde{p}_{t}  &= f  ~~\mbox{in}~~ \Omega \times [0,T] \\
 c\Dnu{ \tilde{p}}+\tilde{p}_{t} &= 0  ~~\mbox{in}~~ \Gamma_{a} \times [0,T]  \\
 \tilde{p}&= 0  ~~\mbox{on}~~ \Gamma_{D} \times [0,T] \\
\Dnu{ \tilde{p}}&=0 ~~\mbox{on}~~ \Gamma_{N} \times [0,T] \\
\Dnu{ \tilde{p}} &= - \rho \wtil  ~~\mbox{on}~~ \Gampl \times [0,T] \\
\rho\wtil_{tt} + \delta\Delpl^{2} \wtil +  \damppl{\beta(-\Delpl)^{\gamma} \wtil_{t}} &= \kappa\tilde{p}_{tt}+ \kappa \bar{p}_{tt} +\tilde{h}  ~~\mbox{on}~~ \Gampl \times [0,T] \\
\tilde{p}(0)&= p_{0}, ~~\tilde{p}_{t}(0)= p_{1}.
\end{aligned}
\end{equation}

We thus continue by considering \eqref{pwtil} with 
$\tilde{h} + \kappa \bar{p}_{tt}$ replaced by $\tilde{h} \in W^{1,1}(\zeroT L^{2}(\Gampl)) $
while supressing the tilde notation for the $p$ variable, that is, we study \eqref{pwlin} with $g_D=0$, $g_N=0$, $a=0$. 
In fact, the high temporal regularity $\bar{p}\in W^{3,\infty}(\zeroT L^2(\Omega))$ in Lemma~\ref{lem:extension} is motivated by the need for $\tilde{h} + \kappa \bar{p}_{tt}\in W^{1,1}(\zeroT L^{2}(\Gampl)) $.

\medskip 
 
In the energy estimates below, we will see that the plate equation sometimes does not yield sufficient regularity of the trace $p\vert_{\Gampl}$ and we thus need to bound the values of $p$ on $\Gampl$ by relying on its interior regularity in $\Omega$ and a trace estimate.
While our energy estimates will allow us to bound $\Vert\Delta p\Vert_{L^2(\Omega)}+\Vert\nabla p\Vert_{L^2(\Omega)}$ we cannot expect full $H^2(\Omega)$ regularity from elliptic estimates -- again due to the fact that boundary conditions are too weak.
Thus, the following trace estimate (with $\vec{\varphi}=\nabla p$ in mind) will be useful.
\begin{Lemma}\label{lem:div}
For a function $\vec{\varphi} \in L^{2}(\Omega)^d$ with $\div{\vec{\varphi}} \in L^{2}(\Omega)$, the trace
$\vec{\varphi} \cdot \nu$ is a well defined element in $H^{-1/2}(\partial \Omega)$, the following inequality holds
\begin{align}\label{div}
\Vert \vec{\varphi}\cdot \nu  \Vert_{H^{-1/2}(\partial \Omega)} 
\leq C^{tr}_{-1/2} \Bigl(\Vert \vec{\varphi} \Vert_{L^{2}(\Omega)^d} + \Vert \div \vec{\varphi} \Vert_{L^{2}(\Omega)} \Bigr)
\end{align}
where $C^{tr}_{-1/2}$ is the norm of the inverse trace operator $H^{1/2}(\partial\Omega)\to H^1(\Omega)$.
\end{Lemma}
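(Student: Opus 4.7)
The plan is to define the normal trace $\vec{\varphi}\cdot\nu$ as a distribution on $\partial\Omega$ via a duality pairing coming from Green's formula, and then to read off the estimate directly from Cauchy--Schwarz. This is the classical construction for the space $H(\div;\Omega)=\{\vec{\varphi}\in L^2(\Omega)^d:\div\vec{\varphi}\in L^2(\Omega)\}$ as in Temam or Girault--Raviart, but given the bound the paper needs, the key is to trace through the constants carefully.

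First I would fix $\psi\in H^{1/2}(\partial\Omega)$ and use the inverse trace operator $\mathcal{E}:H^{1/2}(\partial\Omega)\to H^1(\Omega)$ to pick a lifting $v=\mathcal{E}\psi\in H^1(\Omega)$ with $v|_{\partial\Omega}=\psi$ and $\Vert v\Vert_{H^1(\Omega)}\leq C^{tr}_{-1/2}\Vert\psi\Vert_{H^{1/2}(\partial\Omega)}$, where $C^{tr}_{-1/2}$ is the operator norm named in the statement. For smooth $\vec\varphi$ Green's identity gives
\begin{equation*}
\int_{\partial\Omega}(\vec{\varphi}\cdot\nu)\,\psi\,dS=\int_\Omega \vec{\varphi}\cdot\nabla v\,dx+\int_\Omega v\,\div\vec{\varphi}\,dx,
\end{equation*}
and Cauchy--Schwarz bounds the right-hand side by
\begin{equation*}
\Vert\vec{\varphi}\Vert_{L^2(\Omega)^d}\Vert\nabla v\Vert_{L^2(\Omega)}+\Vert\div\vec{\varphi}\Vert_{L^2(\Omega)}\Vert v\Vert_{L^2(\Omega)}\leq\bigl(\Vert\vec{\varphi}\Vert_{L^2(\Omega)^d}+\Vert\div\vec{\varphi}\Vert_{L^2(\Omega)}\bigr)\Vert v\Vert_{H^1(\Omega)}.
\end{equation*}
Taking the supremum over $\psi$ of unit $H^{1/2}$-norm yields the advertised inequality, provided $\vec{\varphi}\cdot\nu$ is already defined as an element of $H^{-1/2}(\partial\Omega)$.

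Second, to make the definition rigorous for general $\vec{\varphi}\in L^2(\Omega)^d$ with $\div\vec{\varphi}\in L^2(\Omega)$, I would use density of $C^\infty(\overline{\Omega})^d$ in $H(\div;\Omega)$ (which holds under the $C^1$-boundary assumption on $\Omega$): pick a sequence $\vec{\varphi}_n\to\vec{\varphi}$ in $H(\div;\Omega)$, note that by the estimate already proved the sequence $\{\vec{\varphi}_n\cdot\nu\}$ is Cauchy in $H^{-1/2}(\partial\Omega)$, and define $\vec{\varphi}\cdot\nu$ as its limit. The bound passes to the limit, giving \eqref{div} on all of $H(\div;\Omega)$. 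One checks the definition is independent of the approximating sequence via the bound applied to differences.

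The only real subtlety is the density of smooth vector fields in $H(\div;\Omega)$, which is the one non-trivial ingredient here; everything else is Green's identity, lifting, and Cauchy--Schwarz. Since the paper assumes a $C^1$ boundary, the standard convolution/partition-of-unity argument applies, and no further regularity is needed. I would then simply close by noting that the operator $\vec{\varphi}\mapsto\vec{\varphi}\cdot\nu$ constructed this way is bounded from $H(\div;\Omega)$ into $H^{-1/2}(\partial\Omega)$ with operator norm at most $C^{tr}_{-1/2}$, which is exactly \eqref{div}.
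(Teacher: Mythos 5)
Your proof is correct and follows essentially the same route as the paper's: the duality identity $\int_{\partial\Omega}(\vec{\varphi}\cdot\nu)\,v\,dS=\int_\Omega\div(\vec{\varphi}\bar{v})\,dx$ with an $H^1$-lifting of the boundary datum, Cauchy--Schwarz, and a density argument in $H(\div;\Omega)$. The paper's proof is just a condensed version of the same argument, so no differences worth noting.
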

\begin{proof}
The estimate follows by a duality argument, using the identity 
$\int_{\partial\Omega} \vec{\varphi} \cdot \nu \, v\, dS =\int_\Omega \div(\vec{\varphi}\bar{v})\, dx$ 
for sufficiently smooth $\vec{\varphi}$, where $\bar{v}\in H^1(\Omega)$ is an extension of $v\in H^{1/2}(\partial\Omega)$, such that $\text{tr}_{\partial\Omega}\bar{v}=v$, and employing density.  
\end{proof}

For the same reason of $p(t)$ only being contained in the space $H^2_\Delta(\Omega)$ but not necessarily in $H^2(\Omega)$, when taking limits in the Galerkin approximation below, we will rely on the following auxiliary result.
\begin{Lemma}\label{lem:HDelta}
    The operator $\Delta:H^2_\Delta(\Omega)\to L^2(\Omega)$ is weakly closed. 
\end{Lemma}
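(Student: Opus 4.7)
The plan is to verify weak closedness directly from the definition, exploiting the fact that the Laplacian is a bounded linear operator in the distributional sense and that duality with $C_c^\infty(\Omega)$ interacts nicely with $L^2$ weak convergence. Recall that $H^2_\Delta(\Omega)$ equipped with the graph norm $\Vert u \Vert^2_{H^2_\Delta(\Omega)} = \Vert u \Vert^2_{L^2(\Omega)} + \Vert \Delta u \Vert^2_{L^2(\Omega)}$ is a Hilbert space, so weak closedness of $\Delta$ amounts to the following statement: whenever $u_n \rightharpoonup u$ in $L^2(\Omega)$ and $\Delta u_n \rightharpoonup v$ in $L^2(\Omega)$, one has $u \in H^2_\Delta(\Omega)$ with $\Delta u = v$.

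First, I would take an arbitrary test function $\phi \in C_c^\infty(\Omega)$. Since $\phi$ has compact support inside $\Omega$, integration by parts produces no boundary contributions, so
\begin{equation*}
\int_\Omega (\Delta u_n) \phi \, dx = \int_\Omega u_n \, (\Delta \phi) \, dx.
\end{equation*}
The left-hand side converges to $\int_\Omega v \phi \, dx$ by the weak convergence $\Delta u_n \rightharpoonup v$ in $L^2(\Omega)$, while the right-hand side converges to $\int_\Omega u \, (\Delta \phi) \, dx$ by the weak convergence $u_n \rightharpoonup u$ in $L^2(\Omega)$, using that $\Delta \phi \in L^2(\Omega)$ in both cases.

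Consequently, $\int_\Omega u \, (\Delta\phi)\, dx = \int_\Omega v \phi\, dx$ for every $\phi \in C_c^\infty(\Omega)$, which is exactly the statement that the distributional Laplacian of $u$ equals $v$. Since $v \in L^2(\Omega)$, this shows $u \in H^2_\Delta(\Omega)$ and $\Delta u = v$, completing the proof.

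There is no real obstacle here: the only subtlety worth flagging is that one must use test functions with compact support in $\Omega$, since functions in $H^2_\Delta(\Omega)$ carry no a priori boundary information (they need not even admit a well-defined trace in the usual sense), so any attempt to integrate by parts against non-compactly supported test functions would produce ill-defined boundary terms. Restricting duality to $C_c^\infty(\Omega)$ sidesteps this issue entirely and reduces the statement to the definition of distributional derivatives together with the standard duality between weak $L^2$ convergence and $L^2$ test functions.
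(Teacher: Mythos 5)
Your argument is correct and is essentially identical to the paper's proof: both pass to the limit in the duality pairing against $\phi\in C_c^\infty(\Omega)$, using $\int_\Omega \Delta u_n\,\phi\,dx=\int_\Omega u_n\,\Delta\phi\,dx$ and the two weak convergences to identify the distributional Laplacian of $u$ with $v\in L^2(\Omega)$. The remark about avoiding boundary terms by restricting to compactly supported test functions is a sensible clarification but does not change the substance.
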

\begin{proof}
    For any sequence $(p^n)_{n\in\mathbb{N}}$ such that (a) $p_m\rightharpoonup p$, (b) $\Delta p_m\rightharpoonup P$ in $L^2(\Omega)$, we have, for any $\phi\in C_0^\infty(\Omega)$
    \[
    \begin{aligned}
    &\int_\Omega\Delta p^n \, \phi \, dx =\int_\Omega p^n \Delta\phi\, dx \to \int_\Omega p \Delta\phi\, dx \text{ as }n\to\infty\ \text{ due to (a)}\\
    &\int_\Omega\Delta p^n \, \phi\, dx \to \int_\Omega P \phi\, dx \text{ as }n\to\infty\ \text{ due to (b)}
    \end{aligned}
    \]
    and therefore 
    \[
    \text{for all }\phi\in C_0^\infty(\Omega) \, : \ 
    \int_\Omega p \Delta\phi\, dx = \int_\Omega P \phi\, dx.
    \]
    Thus $\Delta p$ exists as a weak derivative and by uniqueness coincides with $P$ a.e., which also implies that  $\Delta p\in L^2(\Omega)$.
\end{proof}

\subsection{Weak Formulation of the Linearized System}

With the test and Ansatz space
\begin{equation}\label{eqn:V}
V=\{(\phi,\psi)\in H^2(\Omega)\times H^2(\Gampl)\, : \, \Dnu{\phi} =-\rho\psi\text{ on }\Gampl,\ 
\Dnu{\phi}=0\mbox{ on }\Gamma_{N}\}
\end{equation}
a variational formulation of \eqref{pwlin} can be given by
\begin{equation}\label{eqn:var}
\begin{aligned}
&(p(t),\wtil(t))\in V \text{ for a.e. }t\in (0,T) \text{ and }
\text{for all }(\phi,\psi)\in V\, : \\
&\int_\Omega \Bigl(\nabla p_{tt}\cdot\nabla \phi +\tfrac{c^2}{\alpha}\Delta p\,\Delta\phi+\tfrac{b}{\alpha}
\Delta p_t\,\Delta\phi\Bigr)\, dx
+\tfrac{\rho}{\kappa}\int_{\Gampl} \Bigl(\rho \wtil_{tt}\psi+\delta\Delpl\wtil\, \Delpl\psi  + \damppl{\beta(-\Delpl)^{\gamma} \wtil_{t} \psi}   \Bigr) \,dS
\\
&+\int_{\Gamma_{a}} c\partial_\nu p_t\, \partial_\nu\phi\, dS
\ = \int_\Omega \tfrac{f}{\alpha}\,(-\Delta\phi)\,dx + \tfrac{\rho}{\kappa}\int_{\Gampl} \tilde{h}\, \psi\, dS \\
&(p,p_t)(0)=(p_0,p_1), \quad (\wtil,\wtil_t)(0)=(\wtil_0,\wtil_1)
\end{aligned}
\end{equation}

Indeed, we have the following equivalence
  \begin{Lemma}\label{lem:equivalence}
If $(p,\wtil)\in \bigl(H^2(\zeroT H^1(\Omega))\cap H^1(\zeroT H^2_\Delta(\Omega))\bigr)$ $\times
\bigl(H^2(\zeroT \HminustwoGampl)\cap  H^1(\zeroT L^2(\Gampl))\cap  L^{\infty}(\zeroT H^2(\Gampl))\bigr)$
\footnote{Recall the notation
$H^2_\Delta(\Omega):=\{u\in L^2(\Omega)\, : \, u, \,\Delta u\in L^2(\Omega)\}\supset H^2(\Omega)$
}
solves \eqref{eqn:var} and the following compatibility conditions are satisfied:
\begin{equation}\label{eqn:compat}
p_1+c\partial_\nu p_0 =0 \text{ on }\Gamma_{a}, \quad
p_0=0
\text{ and }p_1=0
\text{ on }\Gamma_{D}
\end{equation}
Then $(p,\wtil)$ solves 
\eqref{pwlin} with $g_D=0$, $g_N=0$, $a=0$ (that is, \eqref{pwtil})
in an $L^2(\zeroT L^2(\Omega))\times L^2(\zeroT \HminustwoGampl)$ sense.

If additionally the following compatibility conditions hold on $\Gampl$
\begin{equation}\label{eqn:compat_wtil}
\begin{aligned}
&\wtil_0 = \kappa p_0-\delta\Delpl^2 w_0  \damppl{   - \beta(-\Delpl)^{\gamma} w_{1}}+\tilde{h}(0), \quad 
\wtil_1= \kappa p_1-\delta\Delpl^2 w_1   \damppl{- \beta(-\Delpl)^{\gamma} \wtil_{0}}+\tilde{h}_t(0), \\
&\partial_\nu p_0 = -\rho\wtil_0, \quad 
\partial_\nu p_1 = -\rho\wtil_1
\end{aligned}
\end{equation}
then with $w(t):=w_0+w_1t+\int_0^t\int_0^s \wtil(r)\, dr\, ds$, the pair 
$(p,w)$ solves \eqref{p_lin} and \eqref{pl} in an $L^2(\zeroT L^2(\Omega))\times L^2(\zeroT L^2(\Gampl))$ sense with boundary conditions \eqref{BCp}, \eqref{BCa}, and 
\eqref{BCe}.

\end{Lemma}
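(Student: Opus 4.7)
The plan is to reverse the integration-by-parts manipulations that produced \eqref{eqn:var} from \eqref{pwtil}: multiply the Westervelt equation by $-\Delta\phi/\alpha$ and the plate equation by $(\rho/\kappa)\psi$, integrate by parts, and absorb the resulting boundary terms using the interface condition $\partial_\nu p=-\rho\wtil$ on $\Gampl$ (built into $(p,\wtil)\in V$), $\partial_\nu p=0$ on $\Gamma_N$, the time-differentiated absorbing condition $p_{tt}=-c\partial_\nu p_t$ on $\Gamma_a$, and $p_{tt}=0$ on $\Gamma_D$. Running each of these identifications backwards extracts the corresponding piece of \eqref{pwtil}; the assumed regularity makes every pairing well-defined, in particular $\partial_\nu p,\partial_\nu p_t\in H^{-1/2}(\partial\Omega)$ via Lemma~\ref{lem:div}.

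Restricting first to $(\phi,0)\in V$, integrating $\int_\Omega\nabla p_{tt}\cdot\nabla\phi\,dx$ by parts and using $\partial_\nu\phi=0$ on $\Gampl\cup\Gamma_N$ rewrites \eqref{eqn:var} as
\begin{equation*}
-\int_\Omega \tfrac{q}{\alpha}\,\Delta\phi\,dx + \int_{\Gamma_D} p_{tt}\,\partial_\nu\phi\,dS + \int_{\Gamma_a}(p_{tt}+c\partial_\nu p_t)\,\partial_\nu\phi\,dS = 0,
\end{equation*}
where $q:=\alpha p_{tt}-c^2\Delta p-b\Delta p_t-f\in L^2(\zeroT L^2(\Omega))$. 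Testing first with $\phi\in C_c^\infty(\Omega)$ yields $\Delta(q/\alpha)=0$ in $\mathcal{D}'(\Omega)$. Independently lifting $\phi|_{\partial\Omega}$ and $\partial_\nu\phi|_{\Gamma_D\cup\Gamma_a}$ and applying the weak Green identity for the harmonic $L^2$-function $q/\alpha$ (whose traces $q/\alpha\in H^{-1/2}(\partial\Omega)$ and $\partial_\nu(q/\alpha)\in H^{-3/2}(\partial\Omega)$ are defined by duality), one extracts the natural boundary identities $q/\alpha=p_{tt}$ on $\Gamma_D$, $q/\alpha=p_{tt}+c\partial_\nu p_t$ on $\Gamma_a$, and $\partial_\nu(q/\alpha)=0$ on all of $\partial\Omega$. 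Harmonicity together with the vanishing Neumann trace forces $q/\alpha$ to be spatially constant in $\Omega$; the compatibility conditions \eqref{eqn:compat} (time-integrated against $p_{tt}|_{\Gamma_D}=q/\alpha$ twice, and against $p_{tt}+c\partial_\nu p_t|_{\Gamma_a}=q/\alpha$ once) then pin that constant to zero. One concludes $\alpha p_{tt}-c^2\Delta p-b\Delta p_t=f$ a.e.\ in $\Omega$, $p=0$ on $\Gamma_D$, and $p_t+c\partial_\nu p=0$ on $\Gamma_a$.

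With those identities in hand, the first three terms of the displayed equation vanish for every $(\phi,\psi)\in V$, and substituting $\partial_\nu\phi=-\rho\psi$ on $\Gampl$ in the remainder of \eqref{eqn:var} leaves
\begin{equation*}
\bigl\langle \rho\wtil_{tt}+\delta\Delpl^2\wtil+\beta(-\Delpl)^\gamma\wtil_t-\kappa p_{tt}-\tilde{h},\,\psi\bigr\rangle_{H^2(\Gampl)^*,H^2(\Gampl)}=0\quad\text{for all }\psi\in H^2(\Gampl),
\end{equation*}
i.e.\ the plate equation in the $L^2(\zeroT H^2(\Gampl)^*)$ sense; the interface and Neumann conditions on $\Gampl$ and $\Gamma_N$ are already encoded in $(p,\wtil)\in V$. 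For the second assertion, set $w(t):=w_0+w_1 t+\int_0^t\!\int_0^s\wtil(r)\,dr\,ds$ so that $w_{tt}=\wtil$, $w(0)=w_0$, and $w_t(0)=w_1$; antidifferentiating the $\wtil$-equation twice in time and matching the constants of integration at $t=0$ through \eqref{eqn:compat_wtil} reproduces \eqref{pl} for $w$ and the interface condition \eqref{BCp}. The delicate step is the deduction $q\equiv 0$ from merely $\Delta(q/\alpha)=0$: because $p$ enters \eqref{eqn:var} only through $\nabla p_{tt}$, $\Delta$-type terms, and boundary traces of $\partial_\nu\phi$, adding any function of $t$ alone to $p$ leaves \eqref{eqn:var} invariant, so the interior equation is a~priori fixed only modulo such a function, and removing this freedom requires the simultaneous use of all three natural trace identities on $\Gamma_D$ and $\Gamma_a$ together with the $t=0$ vanishing encoded in \eqref{eqn:compat}.
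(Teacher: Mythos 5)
Your overall route is the same as the paper's: undo the integration by parts in \eqref{eqn:var}, isolate the interior residual using test functions with $\psi=0$, recover the boundary conditions on $\Gamma_D$ and $\Gamma_a$ with harmonic test functions, read off the plate equation from the remaining $\Gampl$ term, and finally integrate twice in time using \eqref{eqn:compat_wtil}. The difference lies in how the interior identity $q:=\alpha p_{tt}-c^2\Delta p-b\Delta p_t-f=0$ is obtained, and this is where your argument has a genuine gap. From interior testing you only get $\Delta(q/\alpha)=0$, and together with the vanishing weak Neumann trace this gives $q/\alpha=c_0(t)$, a spatial constant for a.e.\ $t$ (this part is fine). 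But the claimed removal of $c_0$ via \eqref{eqn:compat} does not work: the trace identities give $p_{tt}\vert_{\Gamma_D}=c_0(t)$ and $(p_{tt}+c\partial_\nu p_t)\vert_{\Gamma_a}=c_0(t)$, and integrating in time with \eqref{eqn:compat} only yields $p\vert_{\Gamma_D}(t)=\int_0^t\!\int_0^s c_0(r)\,dr\,ds$ and $(p_t+c\partial_\nu p)\vert_{\Gamma_a}(t)=\int_0^t c_0(s)\,ds$, identities that are satisfied for \emph{every} $c_0$; nothing forces $c_0\equiv0$. In fact no argument of this type can succeed: as you yourself observe, $p\mapsto p+c(t)$ with $c(0)=c'(0)=0$ leaves \eqref{eqn:var}, the constraint $(p(t),\wtil(t))\in V$ and the conditions \eqref{eqn:compat} invariant while shifting $c_0$ by $c''(t)$, so $c_0$ is not determined by the hypotheses you use in that step. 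Your closing sentence asserts precisely the deduction that your own invariance remark shows to be out of reach.

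The paper short-circuits this issue differently: it tests with $\phi$ such that $-\Delta\phi$ is an \emph{arbitrary} $L^2(\Omega)$ function and $\partial_\nu\phi=0$ on $\partial\Omega$, concluding $q=0$ pointwise first and deriving the $\Gamma_D$, $\Gamma_a$ conditions afterwards; your observation that such $\phi$ exists only for mean-zero right-hand sides (since $\int_\Omega\Delta\phi\,dx=\int_{\partial\Omega}\partial_\nu\phi\,dS=0$) is a legitimate criticism of that step, so you have correctly located the delicate point, but your proposed fix does not close it. To complete your argument you need additional input beyond \eqref{eqn:var} and \eqref{eqn:compat} that breaks the $p\mapsto p+c(t)$ invariance, for instance knowing $p(t)=0$ on $\Gamma_D$ for a.e.\ $t$ (which the Galerkin ansatz of Proposition~\ref{prop:wellposed_lin_weak} builds in through eigenfunctions vanishing on $\Gamma_D$), or an enlarged class of test functions realizing all of $L^2(\Omega)$ as $-\Delta\phi$. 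As written, the step ``the compatibility conditions pin that constant to zero'' is a gap.
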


\begin{proof}
Integrating by parts and then using $\partial_\nu\phi =-\rho\psi$ on $\Gampl$ we obtain from \eqref{eqn:var}
\[
\begin{aligned}
0=&\int_\Omega \Bigl(p_{tt} -\tfrac{c^2}{\alpha}\Delta p - \tfrac{b}{\alpha}
\Delta p_t-\tfrac{f}{\alpha}\Bigr)(-\Delta\phi)\, dx +\int_{\partial\Omega}p_{tt}\partial_\nu \phi \,dS\\
&+\tfrac{\rho}{\kappa}\int_{\Gampl} \Bigl(\rho \wtil_{tt}+\delta\Delpl^2 \wtil 
\damppl{ + \beta(-\Delpl)^{\gamma} \wtil_{t}} 
-\tilde{h}\Bigr)\psi \,dS
+\int_{\Gamma_{a}} c\partial_\nu p_t\, \partial_\nu\phi\, dS 
\\
=&\int_\Omega \Bigl(p_{tt} -\tfrac{c^2}{\alpha}\Delta p -\tfrac{b}{\alpha}
\Delta p_t-\tfrac{f}{\alpha}\Bigr)(-\Delta\phi)\, dx 
+\int_{\Gamma_{a}} (c\partial_\nu p_t+p_{tt})\, \partial_\nu\phi\, dS 
+\int_{\Gamma_{D}} p_{tt}\partial_\nu\phi\, dS
\\
&+\tfrac{\rho}{\kappa}\int_{\Gampl} \Bigl(\rho \wtil_{tt}+\delta\Delpl^2 \wtil 
\damppl{ + \beta(-\Delpl)^{\gamma} \wtil_{t}} 
-\kappa p_{tt}-\tilde{h}\Bigr)\psi \,dS
\end{aligned}
\]
Now, if we choose test functions $(\phi,\psi) \in V$ such that $\psi=0$ and $\Dnu{\phi}=0$ on $\partial \Omega$, we obtain
\[
\begin{aligned}
0=&\int_\Omega \Bigl(p_{tt} -\tfrac{c^2}{\alpha}\Delta p - \tfrac{b}{\alpha}
\Delta p_t-\tfrac{f}{\alpha}\Bigr)(-\Delta\phi)\, dx.
\end{aligned}
\]
Note that given any function $q \in L^{2}(\Omega)$, one can find a function $\phi \in H^{2}(\Omega)$ such that $-\Delta\phi=q$ in $\Omega$ and $\Dnu{\phi}=0$ on the boundary. Hence, we may conclude that
\[
\begin{aligned}
0=&\int_\Omega \Bigl(p_{tt} -\tfrac{c^2}{\alpha}\Delta p - \tfrac{b}{\alpha}
\Delta p_t-\tfrac{f}{\alpha}\Bigr)\,q \,dx
\end{aligned}
\]
holds for all $q\in L^{2}(\Omega)$, and thus \eqref{p_lin} is satisfied in the $L^{2}$ sense.
On the other hand, if we choose the test functions $(\phi, \psi) \in V$ such that $\Delta \phi=0$ in $\Omega$, with $\Dnu{\phi}=\psi=0$ on $\Gampl$, and 
$\Dnu{\phi}=0$ on $\Gamma_{a}$, $\Dnu{\phi}= u$ on $\Gamma_{D}$ 
or vice versa $\Dnu{\phi}=0$ on $\Gamma_{D}$, $\Dnu{\phi}= u$ on $\Gamma_{a}$ respectively
for some arbitrary function $u \in H^{1/2}(\Gamma_{D})$ or $u \in H^{1/2}(\Gamma_{a})$, we obtain
\[
\begin{aligned}
\int_{\Gamma_{D}} p_{tt}\, u\, dS =0 \mbox{ for all }u \in  H^{1/2}(\Gamma_{D})
\end{aligned}
\]
and
\[
\begin{aligned}
\int_{\Gamma_{a}} (c\partial_\nu p_t+p_{tt})\,u\, dS=0  \mbox{ for all }u \in  H^{1/2}(\Gamma_{a}).
\end{aligned}
\]
Therefore, from the compatibility conditions \eqref{eqn:compat}, we conclude that \eqref{BCe} and \eqref{BCa} hold in $H^{-1/2}$ on $\Gamma_{D}$ and $\Gamma_{a}$ with $g_D=0$, $a=0$, respectively, .

To recover the plate equation \eqref{wtil}, we select test functions $(\phi,\psi) \in V$ with 
$\Delta \phi=0$ and $\Dnu{\phi}=0$ on $\Gamma_{a}$ and $\Gamma_{D}$, so that we obtain 
\[
\begin{aligned}
\int_{\Gampl} \Bigl(\rho \wtil_{tt}+\delta\Delpl^2 \wtil
\damppl{ + \beta(-\Delpl)^{\gamma} \wtil_{t}}
-\kappa p_{tt}   -\tilde{h}\Bigr)\psi \,dS =0
\end{aligned}
\]
for all $\psi \in H^{2}(\Gampl)$, and hence the pair $(\wtil, \wtil_{t})$ satisfies equation 
\eqref{wtil} in $\HminustwoGampl$. 

If we integrate \eqref{wtil} and \eqref{BCptil} twice in time, and utilize the additional compatibility conditions \eqref{eqn:compat_wtil}, we obtain that $(w,w_{t})$ satisfy \eqref{pl} and \eqref{BCp}. 

\end{proof}
\subsection{Well-Posedness of the Linearized System}

To prove well-posedness, as an intermediate result we will show existence of a solution to the following weaker variational formulation 
\begin{equation}\label{eqn:var_weak}
\begin{aligned}
&(p,\wtil)\in U\text{ and }
\text{for all }(q,\vtil)\in C^\infty([0,T],V), \, (q(T),\vtil(T))=(0,0)\, : \\
&\int_0^T\Bigl\{\int_\Omega \Bigl(-\nabla p_{t}\cdot\nabla q_{t} +\tfrac{c^2}{\alpha}\Delta p\,\Delta q +\tfrac{b}{\alpha}
\Delta p_t\,\Delta q\Bigr)\, dx\\
&\qquad +\tfrac{\rho}{\kappa}\int_{\Gampl}  \Bigl(-\rho\wtil_{t}\vtil_{t}+\delta\Delpl\wtil\, \Delpl\vtil \damppl{+ \beta(-\Delpl)^{\gamma} \wtil_{t} \vtil} \Bigr) \,dS
+\int_{\Gamma_{a}} c\partial_\nu p_t\, \partial_\nu q\, dS\Bigr\}\,dt
\\
&= \int_0^T\Bigl\{\int_\Omega \tfrac{f}{\alpha}\,(-\Delta q)\,dx 
+\tfrac{\rho}{\kappa}\int_{\Gampl} \tilde{h}\, \vtil\, dS 
+\int_\Omega \nabla p_1\cdot\nabla q(0)\, dx
+\tfrac{\rho}{\kappa}\int_{\Gampl} \rho\wtil_1 \vtil(0)\, dS \Bigr\}
\\
&p(0)=p_0, \quad \wtil(0)=\wtil_0
\end{aligned}
\end{equation}
in the solution space
\begin{equation}\label{U}
U=(W^{1,\infty}(\zeroT H^1(\Omega))\cap H^1(\zeroT H^2_\Delta(\Omega)))\times (W^{1,\infty}(\zeroT L^2(\Gampl))\cap L^\infty(\zeroT H^2(\Gampl)))
\end{equation}
induced by the energy
\begin{equation}\label{energy}
\begin{aligned}
\mathcal{E}[p,\wtil](t):=&
\Ep[p](t)+\Ew[\wtil](t) \text{ where }\\
\Ep[p](t):=&
\Vert p_{t}(t) \Vert_{H^1(\Omega)}^2
+\Vert \Delta p(t) \Vert_{L^2(\Omega)}^{2}
+ b \Vert \Delta p_{t} \Vert_{L^2(0,t;L^2(\Omega))}^{2}
+ \Vert \Dnu p_{t} \Vert_{L^2(0,t;L^{2}(\Gamma_{a}))}^{2}\\
\Ew[\wtil](t):=& 
\Vert \wtil_{t}(t) \Vert_{L^{2}(\Gampl)}^{2}
+ \Vert \Delpl \wtil(t) \Vert_{L^{2}(\Gampl)}^{2}
+ \beta \Vert (-\Delpl)^{\gamma/2} \wtil_{t} \Vert_{L^{2}(0,t;L^{2}(\Gampl))}^{2}
\end{aligned}
\end{equation}
where we track dependency on the damping parameters $b$, \damppl{$\beta$}.
Later on, also the higher energy
\begin{equation}\label{E1}
\mathcal{E}_1[p,\wtil]:=\mathcal{E}[p,\wtil]+\mathcal{E}[p_t,\wtil_t]
\end{equation}
will be used.

Accordingly, we define the energy of the initial data
\begin{align}
\mathcal{E}[p,\wtil](0) := 
{\Vert p_1 \Vert_{H^1(\Omega)}^2} 
+ \Vert \Delta p_0 \Vert_{L^{2}(\Omega)}^2 
+ \Vert \wtil_1 \Vert_{L^{2}(\Gampl)}^{2} + \Vert \Delpl \wtil_0 \Vert_{L^{2}(\Gampl)}^{2}
\end{align}
and the norms of the boundary/interior sources
\begin{align}\label{F}
F[f,\tilde{h}] :=  \Vert f \Vert_{L^1(\zeroT L^2(\Omega))}^{2} 
+\Vert\tilde{h}\Vert_{L^1(\zeroT L^2(\Gampl))}^2
\end{align}

\begin{Proposition}\label{prop:wellposed_lin_weak}
\begin{enumerate}
\item[(i)]
Suppose for some arbitrary $T\in(0,\infty]$, that  $\alpha$, $\frac{1}{\alpha}$ $\in L^\infty(\zeroT L^\infty(\Omega))$,  
$\alpha_t ~\in L^1(\zeroT L^\infty(\Omega))$ with $\bar{\alpha}:=\Vert\frac{\alpha_t}{\alpha}\Vert_{L^1(\zeroT L^\infty(\Omega))}<1/4$,  
$f\in L^1(\zeroT L^2(\Omega))$, 
$\tilde{h}\in L^1(\zeroT L^2(\Gampl))$,
$\nabla p_1 , \Delta p_0\in L^2(\Omega)$, and  $\tilde{w}_1, \Delpl\tilde{w}_0 \in L^2(\Gampl)$.
If $\Gamma_{D}=\emptyset$ we additionally assume 
{$\nabla\alpha\in L^\infty(\zeroT L^3(\Omega))$ with 
$\tilde{\alpha}_2=\Vert \alpha^{-3/2}\nabla\alpha \Vert_{L^2(0,t;L^3(\Omega))}$ and 
$\tilde{\alpha}_\infty=\Vert \alpha^{-3/2}\nabla\alpha \Vert_{L^\infty(0,t;L^3(\Omega))}$
small enough.}

Then there exists a solution $(p,\wtil)$ to \eqref{eqn:var_weak} in $U$ defined in \eqref{U}, which for any $t\in(0,T)$ satisfies the energy estimate
\begin{equation}\label{enest}
\mathcal{E}[p,\wtil](t)\leq 
 C \Bigl( \mathcal{E}[p,\wtil](0) + F[f,\tilde{h}]\Bigr)
\end{equation}
\item[(ii)]
Moreover,
\begin{equation}\label{enest_eqp_E}
\begin{aligned}
&\mathcal{E}[p,\wtil](t)+\int_0^t \mathcal{E}[p,\wtil](s)\, ds\\  
&\leq 
C \Bigl(\mathcal{E}[p,\wtil](0)
+ b \Vert \Delta p_0 \Vert_{L^2(\Omega)}^{2}
+\Vert\Dnu{p_0}\Vert_{L^2(\Gamma_a)}^2
\damppl{+\beta \Vert (-\Delpl)^{\gamma/2} \wtil_0 \Vert_{L^{2}(\Gampl)}^{2}}
+ F[f,\tilde{h}]  
\Bigr)
\end{aligned}
\end{equation}
\item[(iii)]
If, in addition to (i), 
$f_t\in L^1(\zeroT L^2(\Omega))$, 
$\tilde{h}_t\in L^1(\zeroT L^2(\Gampl))$,
$\nabla p_2$, $\Delta p_1$ $\in L^2(\Omega)$, and $\tilde{w}_2, \Delpl\tilde{w}_1$ $\in L^2(\Gampl)$, 
$\Dnu{p_2}=-\rho\wtil_2$ on $\Gampl$,
where
$p_2 := \tfrac{1}{\alpha(0)}\Bigl(c^2\Delta p_{0}+b\Delta p_1+f(0)\Bigr)$,
$\tilde{w}_2\:=\frac{1}{\rho}\Bigl(-\delta\Delpl^2\tilde{w}_0\damppl{-\beta(-\Delpl)^\gamma\tilde{w}_1}+\kappa p_2+\tilde{h}(0)\Bigr)$ then $(p,\wtil)$ satisfies the energy estimate
\begin{equation}\label{enest_prime}
\mathcal{E}_1[p,\wtil](t)\leq 
 C \Bigl( \mathcal{E}_1[p,\wtil](0) + F[f,\tilde{h}]  + F[f_t,\tilde{h}_t]\Bigr)
\end{equation}
{for $\mathcal{E}_1[p,\wtil](t):=\mathcal{E}[p,\wtil](t)+\mathcal{E}[p_t,\wtil_t](t)$.
}
The solution $(p,\wtil)$ to \eqref{pwtil} exists and is unique in 
\begin{equation}\label{Uprime}
U':=\{(p,\wtil)\in U\, : \, (p_t,\wtil_t)\in U\} \text{ with $U$ defined in \eqref{U}.}
\end{equation}
\item[(iv)] 
Moreover, under the conditions (i), (ii), (iii), 
\begin{equation}\label{enest_eqp_E1}
\begin{aligned}
&\mathcal{E}_1[p,\wtil](t)+\int_0^t \mathcal{E}_1[p,\wtil](s)\, ds\\  
&\leq 
C \Bigl(\mathcal{E}_1[p,\wtil](0)
+ b \Vert \Delta p_0 \Vert_{L^2(\Omega)}^{2}
+ b \Vert \Delta p_1 \Vert_{L^2(\Omega)}^{2}
+\Vert\Dnu{p_0}\Vert_{L^2(\Gamma_a)}^2
+\Vert\Dnu{p_1}\Vert_{L^2(\Gamma_a)}^2
\\
&\qquad\qquad\damppl{+\beta \Vert (-\Delpl)^{\gamma/2} \wtil_0 \Vert_{L^{2}(\Gampl)}^{2}}
\damppl{+\beta \Vert (-\Delpl)^{\gamma/2} \wtil_1 \Vert_{L^{2}(\Gampl)}^{2}}
+ F[f,\tilde{h}]  + F[f_t,\tilde{h}_t]
\Bigr)
\end{aligned}
\end{equation}
\end{enumerate}

\noindent The constants $C$ in (i), (ii), (iii) depend on the coefficients $b$, $c$, $\rho$, $\delta$, $\kappa$, 
$\Vert\frac{1}{\alpha}\Vert_{L^\infty(\zeroT L^\infty(\Omega))}$,  
$\Vert\alpha_t\Vert_{L^1(\zeroT L^\infty(\Omega))}$, but not directly on $T$.
\end{Proposition}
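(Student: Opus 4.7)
The plan is a Faedo--Galerkin construction using a basis of the constrained test/Ansatz space $V$ from \eqref{eqn:V}, since members of $V$ already encode both the interface condition $\Dnu\phi=-\rho\psi$ on $\Gampl$ and the Neumann condition $\Dnu\phi=0$ on $\Gamma_N$. I would fix a Hilbert basis $\{(\phi_j,\psi_j)\}_{j\in\mathbb{N}}$ of $V$ (e.g., the eigenfunctions of the positive self-adjoint operator naturally associated with the bilinear form in \eqref{eqn:var}), project the variational equation \eqref{eqn:var} onto the $n$-dimensional span $V_n$, and solve the resulting linear ODE system in the coefficients; solvability is immediate from the coercivity of the principal second-order-in-time part plus $\alpha\in L^\infty$, $\tfrac{1}{\alpha}\in L^\infty$. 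The $n$-th Galerkin solution $(p^n,\wtil^n)$ has initial data given by the $V_n$-projection of $(p_0,p_1,\wtil_0,\wtil_1)$.

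For the a priori estimate in (i), the multiplier I would use is $(\phi,\psi)=(p^n_t,\wtil^n_t)$, which lies in $V_n$ by construction. Every $\tfrac{d}{dt}$-free term in \eqref{eqn:var} then becomes either a perfect derivative of an energy component or a non-negative damping quantity: $\int_\Omega\nabla p^n_{tt}\cdot\nabla p^n_t\,dx=\tfrac12\tfrac{d}{dt}\Vert\nabla p^n_t\Vert^2$, while
$\int_\Omega\tfrac{c^2}{\alpha}\Delta p^n\,\Delta p^n_t\,dx=\tfrac{d}{dt}\int_\Omega\tfrac{c^2}{2\alpha}|\Delta p^n|^2\,dx+\int_\Omega\tfrac{c^2\alpha_t}{2\alpha^2}|\Delta p^n|^2\,dx$,
and similarly on the plate. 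The last remainder is the critical one; it is controlled via the hypothesis $\bar\alpha=\Vert\alpha_t/\alpha\Vert_{L^1(L^\infty)}<1/4$ and Grönwall in integral form. The right-hand side term $\int\tfrac{f}{\alpha}(-\Delta p^n_t)\,dx$ is absorbed into the strong damping $\tfrac{b}{\alpha}\Vert\Delta p^n_t\Vert^2$ by Young's inequality, yielding the bound $\mathcal{E}[p^n,\wtil^n](t)\leq C(\mathcal{E}(0)+F[f,\tilde h])$ uniformly in $n$. Weak--$\ast$ compactness in the energy space $U$ produces a subsequential limit $(p,\wtil)$, and Lemma~\ref{lem:HDelta} identifies the weak $L^2$-limit of $\Delta p^n$ as $\Delta p$, so $p(t)\in H^2_\Delta(\Omega)$ and the limit solves \eqref{eqn:var_weak}. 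In the case $\Gamma_D=\emptyset$ the Poincaré--Friedrichs inequality \eqref{PF} is unavailable with $\Gamma=\Gamma_D$, so the lower-order control of $\Vert p\Vert$ has to be recovered either from $\Gampl$ via the boundary term $\Vert\wtil\Vert$ or from writing $p(t)=p_0+\int_0^t p_t$; the extra smallness of $\tilde\alpha_2,\tilde\alpha_\infty$ appears precisely in closing a Grönwall loop where the cross-term $\nabla\alpha\cdot\nabla p/\alpha^{3/2}$ has to be absorbed.

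For (ii), the additional factor $\int_0^t\mathcal{E}[p,\wtil](s)\,ds$ is extracted by combining (i) with a second multiplier estimate obtained from testing with $(p,\wtil)$ itself, where the strong damping $b\Vert\Delta p_t\Vert^2$ and absorbing boundary dissipation $\Vert\partial_\nu p_t\Vert^2_{L^2(\Gamma_a)}$ provide the coercivity needed after integration by parts in time; the boundary terms at $t=0$ account for $b\Vert\Delta p_0\Vert^2$, $\Vert\partial_\nu p_0\Vert^2_{L^2(\Gamma_a)}$, and $\beta\Vert(-\Delpl)^{\gamma/2}\wtil_0\Vert^2$ on the right. Part (iii) is handled by time-differentiating \eqref{eqn:var_weak}: the compatibility conditions \eqref{eqn:compat_wtil} together with the explicit definitions of $p_2,\wtil_2$ ensure that $(p_t,\wtil_t)$ solves the same type of system with data in the same function spaces, so that (i) applied to the differentiated problem gives \eqref{enest_prime}; (iv) is the analogous integrated version. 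Uniqueness in $U'$ is then a direct consequence of the linearity and the estimate in (iii) applied to the difference of two putative solutions.

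The main obstacle throughout is that the mixed boundary conditions preclude full $H^2$ elliptic regularity, so $p(t)$ only belongs to $H^2_\Delta(\Omega)$; this is why the passage to the limit must go through Lemma~\ref{lem:HDelta} rather than compact embedding, why traces of $\nabla p$ on $\Gampl$ are interpreted through Lemma~\ref{lem:div}, and why the standard semigroup techniques of \cite{KL09Westervelt,MeyerWilke2011} cannot be transferred directly. A secondary, more technical obstacle is that the variable coefficient $\alpha$ enters as a weight inside the top-order term $\tfrac{c^2}{\alpha}\Delta p\,\Delta\phi$, so that the energy identity produces a nonlinear remainder $\tfrac{\alpha_t}{\alpha^2}|\Delta p|^2$ that must be controlled by the smallness of $\bar\alpha$ rather than by interpolation; this is what drives the threshold $\bar\alpha<1/4$ in the hypotheses.
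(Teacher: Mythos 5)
Your overall architecture (Galerkin approximation in a basis built into the constrained space $V$, multiplier $(p_t,\wtil_t)$ for (i), multiplier $(p,\wtil)$ for the equipartition estimate (ii), time differentiation for (iii)--(iv), weak-$*$ limits identified via Lemma~\ref{lem:HDelta}) is the same as the paper's, but several steps that carry the actual difficulty are missing or would fail as sketched. The most serious one is the case $\Gamma_D=\emptyset$: what must be controlled separately there is $\Vert p_t\Vert_{L^2(\Omega)}$ (the energy contains the full $H^1(\Omega)$ norm of $p_t$), and neither of your two proposed remedies achieves this. Writing $p=p_0+\int_0^t p_t$ bounds $\Vert p\Vert$ by $\Vert p_t\Vert$, i.e.\ the wrong direction, and would in any case introduce a factor $T$, which the proposition explicitly avoids; and on $\Gampl$ only the normal derivative $\partial_\nu p=-\rho\wtil$ is prescribed, so the Poincar\'e--Friedrichs inequality \eqref{PF}, which requires the Dirichlet trace on a boundary portion, cannot be run ``via $\Vert\wtil\Vert$''. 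The paper closes this step (Step 2.b) with a dedicated multiplier: it tests \eqref{eqn:var} with $(\phi,0)\in V$ where $-\Delta\phi=p^n_t$ and the Neumann data on $\Gamma_a$ are tailored to the absorbing operator, leading to \eqref{enest0}, with \eqref{PF} applied with $\Gamma=\Gamma_a$, the interface terms involving $\wtil^n,\wtil^n_t$ estimated by $H^{\pm1/2}$ duality and the trace estimate, and the $\nabla\alpha$ cross terms absorbed by the smallness of $\tilde{\alpha}_2,\tilde{\alpha}_\infty$; this is precisely where those hypotheses are consumed, and your sketch supplies no argument that actually closes this loop.

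Three further points are glossed over in ways that matter. In (ii), testing with $(p,\wtil)$ produces the sign-indefinite terms $-\int_0^t\Vert\nabla p_t\Vert^2\,ds$ and $-\rho\int_0^t\Vert\wtil_t\Vert^2_{L^2(\Gampl)}\,ds$, which cannot be bounded by the $L^\infty$-in-time energy without a factor $T$; the paper absorbs the plate term by the $\beta$-damping from \eqref{enest1} and the acoustic term via the elliptic estimate \eqref{estI} for the mixed Laplace problem (converting $\Vert\nabla p_t\Vert$ into $\Vert\Delta p_t\Vert$ plus boundary terms controlled by the $\Gamma_a$ dissipation and, through $\partial_\nu p_t=-\rho\wtil_t$, the plate damping), with the $\Gamma_D=\emptyset$ case additionally using Step 2.b; your phrase ``coercivity after integration by parts in time'' does not identify this mechanism, and your attribution of the right-hand side terms solely to data at $t=0$ misses it. In (iii), the time-differentiated problem is not ``the same system with data in the same spaces'': since $\alpha$ is time dependent, the source becomes $f_t-\frac{\alpha_t}{\alpha}\bigl(f+c^2\Delta p+b\Delta p_t\bigr)$, and this commutator must be estimated using $\bar{\alpha}$ together with the level-(i) bounds. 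Finally, your Galerkin set-up is underspecified: the second-order-in-time form $\Vert\nabla\phi\Vert^2_{L^2(\Omega)}+\tfrac{\rho^2}{\kappa}\Vert\psi\Vert^2_{L^2(\Gampl)}$ is not coercive on $V$ (it vanishes on $(\mathrm{const},0)\in V$), so solvability of the Galerkin ODE system is not ``immediate from coercivity''; the paper's particular basis (mixed Neumann/Dirichlet Laplacian eigenfunctions together with harmonic extensions of Laplace--Beltrami eigenfunctions) is chosen exactly so that the coupling constraint holds pointwise and the resulting system \eqref{ODEp}--\eqref{ODEwtil} is triangularly coupled, hence solvable. These gaps are of the ``step that would fail or is missing'' type rather than stylistic differences.
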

\begin{proof} 

\noindent{\bf \underline{Step 1} Galerkin approximation:} (note that this is still done on \eqref{eqn:var} rather than \eqref{eqn:var_weak})\\
We use an $L^2$ orthonormal eigensystem $(\phi_j,\lambda_j)_{j\in\mathbb{N}}$ of the negative Laplacian on $\Omega$ with homogeneous Neumann values on $\Gampl\cup\Gamma_{N}$
and homogeneous Dirichlet values on $\Gamma_{D}$
\[
-\Delta \phi_j=\lambda_j\phi_j\text{ in }\Omega, \qquad \Dnu{\phi}_j=0 \text{ on }\Gampl\cup\Gamma_{N}\,, \qquad\phi =0 \text{ on }~\Gamma_{D}
\]
and 
an $L^2_\rho$ orthonormal eigensystem $(\psi_i,\mu_i)_{i\in\mathbb{N}}$ of the negative Laplace Beltrami operator with Dirichlet boundary conditions on $\Gampl$, 
which we harmonically extend to functions $\tilde{\phi}_i$ on $\Omega$ such that
\[
-\Delta \tilde{\phi}_i=0\text{ in }\Omega, \quad \Dnu{\tilde{\phi}}_i=-\rho\psi_i \text{ on }\Gampl\,,\quad
\Dnu{\tilde{\phi}}_i=0 \text{ on }\Gamma_{a}\cup\Gamma_{N}\,, \quad\tilde{\phi}_i =0 \text{ on }~\Gamma_{D}.
\]
Our Galerkin spaces are then defined by $V^n:=\mbox{span}((\phi_1,0),\ldots,(\phi_n,0),\,(\tilde{\phi}_1,\psi_1),\ldots,(\tilde{\phi}_n,\psi_n))$.
For any fixed $n\in\mathbb{N}$ we make the ansatz
\[
p^n:= \sum_{j=1}^n p^n_j(t)\phi_j+\sum_{i=1}^n \wtil_i^n(t) \tilde{\phi}_i, \qquad
\wtil^n:= \sum_{i=1}^n \wtil_i^n(t) \psi_i
\]
so that in particular $\Dnu{p}^n=-\rho\wtil^n$ holds on $\Gampl$ and due to smoothness of the involved eigenfunctions, $(p^n,\wtil^n)\, \in V$.
Inserting into \eqref{eqn:var} and testing with $(\phi_\ell,0)$, $(\tilde{\phi}_k,\psi_k)$, we end up with the following ODE system for the coefficient vector functions $\underline{p}^n$, $\underline{\wtil}^n$:
\begin{eqnarray}
\label{ODEp}
&&\text{diag}(\lambda_1,\ldots,\lambda_n)(\underline{p}^n)'' 
+ b M^n(t) (\underline{p}^n)'
+ c^2 M^n(t) (\underline{p}^n)
+ D^n (\underline{\wtil}^n)'' + C^n (\underline{\wtil}^n)'(t)= f^{p,n}\\
\label{ODEwtil}
&&\rho(\underline{\wtil}^n)''
\damppl{+\beta \text{diag}(\mu_1^\gamma,\ldots,\mu_n^\gamma) (\underline{\wtil}^n)'}
+\delta\text{diag}(\mu_1^2,\ldots,\mu_n^2) (\underline{\wtil}^n)= f^{\wtil,n}
\end{eqnarray}
where  
\[
\begin{aligned}
&M^n_{j,\ell}(t)= \lambda_j\lambda_\ell  \int_\Omega\tfrac{1}{\alpha}\phi_j\,\phi_\ell\,dx
+ c\int_{\Gamma_{a}} \Dnu{\phi}_j\,\Dnu{\phi}_\ell\, dS \,, \quad
\\
&C^n_{i,\ell}=c \int_{\Gamma_{a}} \Dnu{\tilde{\phi}}_i\,\Dnu{\phi}_\ell\, dS \,, \quad
D^n_{i,\ell}=\rho\int_{\Gampl} \psi_i\,\phi_\ell\, dS \,, \\
& f^{p,n}_\ell(t)=\lambda_\ell\int_\Omega\tfrac{f}{\alpha}\phi_\ell\, dx, \qquad
f^{\wtil,n}_k(t)=\int_{\Gampl}\rho\, \tilde{h}\, \psi_k\, dS,
\end{aligned}
\]
due to the fact that
\[
\int_\Omega\nabla\tilde{\phi}_i\,\nabla\phi_\ell\,dx =\int_{\Gampl}\Dnu{\tilde{\phi}_i}\phi_\ell\,dS = - \rho\int_{\Gampl} \psi_i\,\phi_\ell\, dS, \quad
\int_\Omega\tfrac{b}{\alpha}\Delta\tilde{\phi}_i\,\Delta\phi_\ell\,dx =0, \quad
\int_\Omega\tfrac{c^2}{\alpha}\Delta\tilde{\phi}_i\,\Delta\phi_\ell\,dx =0. 
\]
{Note that $\alpha$ depends on space and time so $M^n$ is in general not a diagonal matrix but clearly positive definite.}
Existence of a solution \eqref{ODEp}, \eqref{ODEwtil} on $(0,T)$ follows from standard ODE results and the fact that {this ODE system is only triangularly coupled so that \eqref{ODEwtil} can be solved for $\underline{\wtil}^n$ independently of \eqref{ODEp} and then $\underline{\wtil}^n$ inserted into \eqref{ODEp}}.

\noindent{\bf \underline{Step 2} energy estimates:}\\
{(We here aim for $T$ independence of estimates in view of our goal of proving global in time well-posednenss.)}
\\
Replacing $\phi$ with $p^{n}_{t}$ and $\psi$ with $\frac{\kappa}{\rho}\wtil^n_t$ in \eqref{eqn:var}, abbreviating $\Vert\cdot\Vert=\Vert\cdot\Vert_{L^2(\Omega)}$ and using the identity 
\begin{equation}\label{iddt}
\frac{1}{2} \frac{d}{dt} \int_\Omega \frac{1}{\alpha} v^2\, dx = \int_\Omega \Bigl(\frac{1}{\alpha} v\, v_t - \frac{1}{2}\frac{\alpha_t}{\alpha^2} v^2\Bigr)\, dx
\end{equation}
we obtain 
\begin{equation}\label{enid}
\begin{aligned}
&\frac{1}{2} \frac{d}{dt} \Vert \nabla p^{n}_{t} \Vert^{2}
+\frac{1}{2} c^2 \frac{d}{dt} \left\Vert \frac{1}{\sqrt{\alpha}} \Delta p^{n} \right\Vert^{2}
+ b\, \left\Vert \frac{1}{\sqrt{\alpha}} \Delta p^{n}_{t} \right\Vert^{2}
+ c\, \Vert \Dnu p^{n}_{t} \Vert_{L^{2}(\Gamma_{a})}^{2}\\
&+ \frac{1}{2} \rho \frac{d}{dt} \Vert \wtil^{n}_{t} \Vert_{L^{2}(\Gampl)}^{2}
+ \frac{1}{2} \delta \frac{d}{dt} \Vert \Delpl \wtil^{n} \Vert_{L^{2}(\Gampl)}^{2}
\damppl{+\beta \Vert (-\Delpl)^{\gamma/2} \wtil^{n} \Vert_{L^{2}(\Gampl)}^{2}}
\\
&= \int_\Omega \Bigl( -\frac{c^2}{2}\frac{\alpha_t}{\alpha^2}(\Delta p^{n})^2 - \frac{f}{\alpha}\Delta p^{n}_{t}\Bigr)\, dx
+\int_{\Gampl} \tilde{h}\, \wtil^{n}_{t}\, dS 
\\
&\leq 
\frac{c^2}{2} \left\Vert \frac{\alpha_t}{\alpha} \right\Vert_{L^\infty(\Omega)}  \left\Vert \frac{1}{\sqrt{\alpha}} \Delta p^{n} \right\Vert^{2}
+ \frac{1}{2b} \left\Vert \frac{1}{\sqrt{\alpha}} f \right\Vert^{2}
+ \frac{b}{2} \left\Vert \frac{1}{\sqrt{\alpha}} \Delta p^{n}_{t} \right\Vert^{2}\\
&\qquad + \Vert \tilde{h} \Vert_{L^{2}(\Gampl)} \, \Vert \wtil^{n}_{t} \Vert_{L^{2}(\Gampl)}
\end{aligned}
\end{equation}

After integrating with respect to time and taking the supremum with respect to $t$ on both sides of \eqref{enid} we obtain
\begin{equation}\label{enest1}
\begin{aligned}
&\Vert \nabla p^{n}_{t} \Vert_{L^\infty(\zeroT L^{2}(\Omega))}^2
+c^2(1-\bar{\alpha}) \left\Vert \frac{1}{\sqrt{\alpha}} \Delta p^{n} \right\Vert_{L^\infty(\zeroT L^2(\Omega))}^{2}
\hspace*{-0.5cm}+ b\, \left\Vert \frac{1}{\sqrt{\alpha}} \Delta p^{n}_{t} \right\Vert_{L^2(\zeroT L^2(\Omega))}^{2}
\hspace*{-0.5cm}+ 2c\, \Vert \Dnu p^{n}_{t} \Vert_{L^2(\zeroT L^{2}(\Gamma_{a}))}^{2}\\
&+\frac{\rho}{2} \Vert \wtil^{n}_{t} \Vert_{L^\infty(\zeroT L^{2}(\Gampl))}^{2}
+ \delta \Vert \Delpl \wtil^{n} \Vert_{L^\infty(\zeroT L^{2}(\Gampl))}^{2}
\damppl{+ \beta \Vert (-\Delpl)^{\gamma/2} \wtil_{t} \Vert_{L^{2}(\zeroT L^{2}(\Gampl))}^{2}}
\\
&\leq 
\Vert \nabla p_1 \Vert^2
+c^2\left\Vert \frac{1}{\sqrt{\alpha(0)}} \Delta p_0 \right\Vert^2
+ \rho \Vert \wtil_1 \Vert_{L^{2}(\Gampl)}^{2}
+ \delta \Vert \Delpl \wtil_0 \Vert_{L^{2}(\Gampl)}^{2}
\\
&\qquad+\frac{1}{b} \left\Vert \frac{1}{\sqrt{\alpha}} f \right\Vert_{L^1(\zeroT L^2(\Omega))}^{2}
+ \frac{2}{\rho} \Vert\tilde{h}\Vert_{L^1(\zeroT L^2(\Gampl))}^2
\end{aligned}
\end{equation}
where we have used Young's inequality in  $ \int_0^t\Vert \tilde{h} \Vert_{L^{2}(\Gampl)} \, \Vert \wtil^{n}_{t} \Vert_{L^{2}(\Gampl)}\, ds
\leq \Vert\tilde{h}\Vert_{L^1(\zeroT L^2(\Gampl))} \Vert\wtil^{n}_{t}\Vert_{L^\infty(\zeroT L^2(\Gampl))}
\leq \frac{1}{\rho} \Vert\tilde{h}\Vert_{L^1(\zeroT L^2(\Gampl))}^2 +\frac{\rho}{4}\Vert\wtil^{n}_{t}\Vert^{2}_{L^\infty(\zeroT L^2(\Gampl))}$.

\medskip

In case of $\Gamma_{D}\not=\emptyset$, {this together with the Poincar\'{e}-Friedrichs inequality {\eqref{PF}} already yields the energy estimate \eqref{enest} for $(p^{n},\wtil^{n})$ in place of $(p,\wtil)$}. Otherwise we need to bound the $L^2(\Omega)$ norm of $p^{n}_t(t)$ separately in order to arrive at its full $H^1(\Omega)$ norm.

\medskip

\noindent
{\bf \underline{Step 2.b} control of $L^{2}(\Omega)$ norm in case $\Gamma_{D}=\emptyset$:}

Taking $(\phi, \psi) \in V $ in \eqref{eqn:var} such that 
$-\Delta \phi= p^{n}_t$, 
$\Dnu{\phi}=\begin{cases}0 &\text{if}~~ B_ap^n_t=0\\
-\frac{B_ap^n_t}{|B_ap^n_t|^2}\frac{1}{\alpha}(c^2 B_ap^n +b B_ap^n_t)&\text{ otherwise }\end{cases}$ 
with $B_a v:=c\Dnu{v}+ v_t$, 
$\Dnu{\phi}=0$ on $
\Gamma_{N}\cup\Gampl$ 
and accordingly $\psi=0$, we obtain 
{
\[
\begin{aligned}
&\frac{1}{2} \frac{d}{dt} \Vert p^{n}_{t} \Vert^{2}
+\frac{1}{2} c^2 \Bigl(\frac{d}{dt} \left\Vert \frac{1}{\sqrt{\alpha}} \nabla p^{n} \right\Vert^{2}
	+\int_\Omega \frac{\alpha_t}{\alpha^2}|\nabla p^{n}|^2\, dx\Bigr)
+ b\, \left\Vert \frac{1}{\sqrt{\alpha}} \nabla p^{n}_{t} \right\Vert^{2}
\\
&= \int_\Omega \Bigl(\frac{f}{\alpha} p^{n}_t+\frac{1}{\alpha^2}p^n_t\nabla\alpha\cdot(c^2\nabla p^n+b\nabla p^n_t)\Bigr)\, dx
-\int_{\partial\Omega} \Bigl(B_a p^n_t\, \Dnu{\phi}-(c^2\Dnu{p^n}+b\Dnu{p^n_t})\frac{1}{\alpha}p^n_t\Bigr)\, dS, 
\end{aligned}
\]
where 
\[
\int_{\Gamma_a} \Bigl(B_a p^n_t\, \Dnu{\phi}-(c^2\Dnu{p^n}+b\Dnu{p^n_t})\frac{1}{\alpha}p^n_t\Bigr)\, dS
= c\int_{\Gamma_a}\frac{1}{\alpha} p_t^2\, dS+ \frac{b}{c}\int_{\Gamma_a}\frac{1}{\alpha} \frac{d}{dt}[p_t^2]\, dS.
\]
Integrating in time yields
}
\begin{equation}
\begin{aligned}
\label{L2}
& \frac{1}{2} \Vert p^{n}_{t}(t) \Vert^{2} 
+ \frac{c^2}{2} \Bigl(\left\Vert \frac{1}{\sqrt{\alpha}} \nabla p^{n}(t) \right\Vert^{2}
+ \int_0^t\int_\Omega \frac{\alpha_t}{\alpha^2}|\nabla p^{n}|^2\, dx\, ds\Bigr)
+\int_{0}^{t} b  \left\Vert \frac{1}{\sqrt{\alpha}} \nabla p^{n}_t \right\Vert^{2} \,dx \,ds\\
& 
+c\int_0^t\left\Vert \frac{1}{\sqrt{\alpha}} p^{n}_t\right\Vert^{2}_{L^2(\Gamma_a)}\, ds
+\frac{b}{c} \Bigl(\left\Vert \frac{1}{\sqrt{\alpha}} p^{n}_t(t)\right\Vert^{2}_{L^2(\Gamma_a)}
+ \int_0^t\int_{\Gamma_a} \frac{\alpha_t}{\alpha^2}|p^{n}|^2\, ds\Bigr) \\
& =
\frac{1}{2} \Vert p^{n}_1 \Vert^{2}
+\frac{c^2}{2} \left\Vert \frac{1}{\sqrt{\alpha}} \nabla p^{n}_0 \right\Vert
+\frac{b}{c} \left\Vert \frac{1}{\sqrt{\alpha}} p^{n}_1\right\Vert^{2}_{L^2(\Gamma_a)}
+\underbrace{\int_0^t\int_\Omega \Bigl(\frac{f}{\alpha} p^{n}_t+\frac{1}{\alpha^2}p^n_t\nabla\alpha\cdot(c^2\nabla p^n+b\nabla p^n_t)\Bigr)\, dx\, ds}_{(0)}\\
&\quad
-c^2\rho\underbrace{{\int_0^t}\int_{\Gampl}\frac{1}{\alpha}{\wtil^n} p^n_t\, dS\, ds}_{(I)}
-b\rho\underbrace{{\int_0^t}\int_{\Gampl}\frac{1}{\alpha}{\wtil^n_t} p^n_t\, dS\, ds}_{(II)}.
\end{aligned}
\end{equation}
We estimate 
\[
\begin{aligned}
(O)\leq &
\Vert \tfrac{f}{\alpha} \Vert_{L^1(0,t;L^2(\Omega))}
\Vert p^{n}_t \Vert_{L^\infty(0,t;L^2(\Omega))}
+c^2 \Vert \tfrac{1}{\sqrt{\alpha}} \nabla p^{n} \Vert_{L^\infty(0,t;L^2(\Omega))}
\Vert p^{n}_t \Vert_{L^2(0,t;L^6(\Omega))}
\Vert \alpha^{-3/2}\nabla\alpha \Vert_{L^2(0,t;L^3(\Omega))}\\
&+b \Vert \tfrac{1}{\sqrt{\alpha}} \nabla p^{n}_t \Vert_{L^2(0,t;L^2(\Omega))}
\Vert p^{n}_t \Vert_{L^2(0,t;L^6(\Omega))}
\Vert \alpha^{-3/2}\nabla\alpha \Vert_{L^\infty(0,t;L^3(\Omega))}
\end{aligned}
\]
where due to \eqref{PF} 
\[
\Vert p^{n}_t \Vert_{L^2(0,t;L^6(\Omega))}
\leq C_{H^1,L^6}^\Omega \Vert p^{n}_t \Vert_{L^2(0,t;H^1(\Omega))}
\leq C_{H^1,L^6}^\Omega C^\Omega_{\Gamma_a} 
\Bigl(\Vert \nabla p^{n}_t \Vert_{L^2(0,t;L^2(\Omega))}^2+\Vert p^{n}_t \Vert_{L^2(0,t;L^2(\Gamma_a))}^2\Bigr)^{1/2}\,.
\]
The terms (I) and (II) can be estimated by
\[
\begin{aligned}
(I)\leq&\frac{c^2\rho}{\underline{\alpha}}\Vert \wtil^n\Vert_{L^2(0,t;H^{1/2}(\Gampl))} \Vert p^n_t\Vert_{L^2(0,t;H^{-1/2}(\Gampl))} \\
(II)\leq&\frac{b\rho}{\underline{\alpha}}\Vert \wtil^n_t\Vert_{L^2(0,t;H^{1/2}(\Gampl))} \Vert p^n_t\Vert_{L^2(0,t;H^{-1/2}(\Gampl))} 
\end{aligned}
\]
where the terms containing ${\wtil^n}$ can be estimated by the 8th and 9th terms in \eqref{enest_eqp_n} and 
\[
\begin{aligned}
    \Vert p^n_t\Vert_{L^2(0,t;H^{-1/2}(\Gampl))}
&\leq C^{\Gampl}_{H^{1/2},H^{-1/2}} C^{tr}_{1/2}\Vert p^n_t\Vert_{L^2(0,t;H^1(\Omega){)}}\\
&\leq C^{\Gampl}_{H^{1/2},H^{-1/2}} C^{tr}_{1/2} C^\Omega_{\Gamma_a} 
\Bigl(\Vert \nabla p^{n}_t \Vert_{L^2(0,t;L^2(\Omega))}^2+\Vert p^{n}_t \Vert_{L^2(0,t;L^2(\Gamma_a))}^2\Bigr)^{1/2}\,.
\end{aligned}
\]

After using Young's inequality, this provides us with the lower order energy estimate
\begin{equation}\label{enest0}
\begin{aligned}
&\frac12\Vert p^{n}_{t} \Vert_{L^\infty(\zeroT L^{2}(\Omega))}^2
+c^2(1-\bar{\alpha}-\tilde{\alpha}_2) \left\Vert \frac{1}{\sqrt{\alpha}} \nabla p^{n} \right\Vert_{L^\infty(\zeroT L^2(\Omega))}^{2}\\
&\ + \Bigl(\frac{\min\{b(1-\tilde{\alpha}_\infty),c\}}{\Vert\alpha\Vert_{L^\infty(\Omega)}(C^\Omega_{\Gamma_a})^2} - (c^2 \tilde{\alpha}_2 + b\tilde{\alpha}_\infty)(C^\Omega_{H^1,L^6})^2\Bigr)
\Vert p^{n}_{t} \Vert_{L^2(\zeroT H^1(\Omega))}^{2}
+ \tfrac{b}{c} (1-\bar{\alpha})\left\Vert \frac{1}{\sqrt{\alpha}}  p^{n}_{t} \right\Vert_{L^2(\zeroT L^2(\Gamma_a))}^{2}
\\
&\leq 
\Vert p_1 \Vert^2
+c^2\left\Vert \frac{1}{\sqrt{\alpha(0)}} \nabla p_0 \right\Vert^2
+\frac12\left\Vert \frac{1}{\sqrt{\alpha}} f \right\Vert_{L^1(\zeroT L^2(\Omega))}^{2}
+ \tilde{C}
(\mathcal{E}[p,\wtil](0) +F[f,\tilde{h}])
\end{aligned}
\end{equation}
where 
$\tilde{\alpha}_2=\Vert \alpha^{-3/2}\nabla\alpha \Vert_{L^2(0,t;L^3(\Omega))}$,
$\tilde{\alpha}_\infty=\Vert \alpha^{-3/2}\nabla\alpha \Vert_{L^\infty(0,t;L^3(\Omega))}$.

\medskip

\noindent
{\bf \underline{Step 2.c} equipartition of energy:} 
To show (ii), we substitute $\phi=p^{n}$ and $\psi= \frac{\kappa}{\rho} \wtil^n$ in \eqref{eqn:var} and use \eqref{iddt}, which yields the equipartition of energy identity
\[
\begin{aligned}
& - \Vert \nabla p^{n}_{t} \Vert^{2} + \frac{d}{dt}\int_\Omega \nabla p^{n}_{t}\cdot\nabla p^{n}\, dx
+ c^2  \left\Vert \frac{1}{\sqrt{\alpha}} \Delta p^{n} \right\Vert^{2}
+ \frac{1}{2} b\, \frac{d}{dt} \left\Vert \frac{1}{\sqrt{\alpha}} \Delta p^{n} \right\Vert^{2}
+ \frac{1}{2} c\, \frac{d}{dt} \Vert \Dnu p^{n} \Vert_{L^{2}(\Gamma_{a})}^{2}\\
& - \rho \Vert \wtil^{n}_{t} \Vert_{L^{2}(\Gampl)}^{2} + \rho \frac{d}{dt} \int_{\Gampl} \wtil^{n}_{t} \wtil^{n} \, dS 
+  \delta  \Vert \Delpl \wtil^{n} \Vert_{L^{2}(\Gampl)}^{2}
\damppl{+ \frac12 \beta\frac{d}{dt}\Vert (-\Delpl)^{\gamma/2} \wtil \Vert_{L^{2}(\Gampl)}^{2}}
\\
&= \int_\Omega \Bigl( -b\frac{\alpha_t}{{2 }\alpha^2}(\Delta p^{n})^2 - \frac{f}{\alpha}\Delta p^{n}\Bigr)\, dx
+ \int_{\Gampl} \, \tilde{h}\, \wtil^{n}\, dS 
\end{aligned}
\]
and, similarly to above, 
{
using $\int_{\Gampl} \, \tilde{h}\, \wtil^{n}\, dS$ $\leq 
\Vert \tilde{h}\Vert_{\HminustwoGampl}\Vert\wtil^{n}\Vert_{H^2(\Gampl)}$ \\
$\leq \Vert\tilde{h}\Vert_{\HminustwoGampl}
\Vert\Delpl\wtil^{n}\Vert_{L^2(\Gampl)}
\Vert(-\Delpl)^{-1}\Vert_{L^2(\Omega)\to H^2(\Omega)} $,
} 
the estimate
\begin{equation}\label{eqpid}
\begin{aligned}
&c^2 \left\Vert \frac{1}{\sqrt{\alpha}} \Delta p^{n} \right\Vert_{L^2(\zeroT L^2(\Omega))}^{2}
+ \frac{b}{4} (1-2\bar{\alpha}) \, \left\Vert \frac{1}{\sqrt{\alpha}} \Delta p^{n} \right\Vert_{L^\infty(\zeroT L^2(\Omega))}^{2}
+ \frac{c}{2} \, \Vert \Dnu p^{n} \Vert_{L^\infty(\zeroT L^{2}(\Gamma_{a}))}^{2}\\
&+ \frac{\delta}{2} \Vert \Delpl \wtil^{n} \Vert_{L^2(\zeroT L^{2}(\Gampl))}^{2}
\damppl{+\frac12\beta \Vert (-\Delpl)^{\gamma/2} \wtil^{n} \Vert_{L^\infty(\zeroT L^{2}(\Gampl))}^{2}}
\\
&\leq
\underbrace{ \Vert \nabla p^{n}_{t} \Vert_{L^2(\zeroT L^{2}(\Omega))}^2 }_{{I}}
+ \int_\Omega \nabla p^{n}_1\cdot\nabla p^{n}_0\, dx
+ \sup_{t\in(0,T)}\int_\Omega -\nabla p^{n}_{t}\cdot\nabla p^{n}\, dx\\ 
&+\underbrace{ \rho \Vert \wtil^{n}_{t} \Vert_{L^2(\zeroT L^{2}(\Gampl))}^{2} }_{{II}}
+\rho\int_{\Gampl}\wtil^{n}_1\, \wtil^{n}_0\, dS
+\rho\sup_{t\in(0,T)}\int_{\Gampl}-\wtil^{n}_{t}\, \wtil^{n}\, dS\\
&+\frac{b}{2}\, \left\Vert \frac{1}{\sqrt{\alpha(0)}} \Delta p_0 \right\Vert^2
+ \frac{c}{2}\, \Vert \Dnu p_0 \Vert_{L^{2}(\Gamma_{a})}^{2}
\damppl{+\frac12\beta \Vert (-\Delpl)^{\gamma/2} \wtil_0 \Vert_{L^{2}(\Gampl)}^{2}}
\\
&\qquad+\frac{1}{b} \left\Vert \frac{1}{\sqrt{\alpha}} f \right\Vert_{L^1(\zeroT L^2(\Omega))}^{2}
+ \frac{1}{2\delta \rho^{2}} \Vert(-\Delpl)^{-1}\Vert_{L^2(\Omega)\to H^2(\Omega)}  \Vert|\tilde{h}\Vert|_{L^2(\zeroT \HminustwoGampl)}^2.
\end{aligned}
\end{equation}
In case $\Gamma_{D} \neq \emptyset $, we may estimate the underlined term $I$ by the following

\begin{equation}\label{estI}
\begin{aligned}
&\Vert \nabla p^n_{t} \Vert_{L^{2}(\zeroT L^{2}(\Omega)} 
\leq  C_\Delta( 
\Vert \Delta p^n_{t} \Vert_{L^{2}(\zeroT H^{-1}(\Omega))} +
\Vert \Dnu p^n_{t} \Vert_{L^{2}(\zeroT H^{-1/2}(\Gamma_{a}\cup\Gamma_{N}\cup\Gampl)} 
) \\
&\leq  C_\Delta( 
C^\Omega_{L^2,H^{-1}}\Vert \Delta p^n_{t} \Vert_{L^{2}(\zeroT L^{2}(\Omega)} 
+\Vert \Dnu p^n_{t} \Vert_{L^{2}(\zeroT H^{-1/2}(\Gamma_{a})} 
+\rho\Vert \wtil^n_{t} \Vert_{L^{2}(\zeroT H^{-1/2}(\Gamma_{a})} 
), 
\end{aligned}
\end{equation}
which together with \eqref{BCptil} allows us to bound I by the damping terms on the left hand side of \eqref{enest1}, that is, the third,  fourth, and seventh terms.
Estmiate \eqref{estI}
follows from elliptic estimates applied to the Laplace equation with mixed Neumann Dirichlet boundary conditions
\begin{align*}
{-}\Delta \phi &=f \text{ in }\Omega, \\
\Dnu \phi &=g \text{ on }\Gamma_{a} \cup \Gamma_{N} \cup \Gampl, \\
\phi&=0 \text{ on }\Gamma_{D}, 
\end{align*}
whose unique solution by the Lax-Milgram Theorem satisfies the inequality
\begin{equation}
\Vert \phi \Vert_{H^{1}} \leq {C_\Delta} (\Vert f \Vert_{H^{-1}}+ \Vert g \Vert_{H^{-1/2}(\Gamma_{N} \cup \Gampl \cup \Gamma_{a})})
\end{equation} 
see, e.g., \cite[p.~ 125]{Kesavan2003}.

Note that in case $\Gamma_{D} = \emptyset $, 
considering instead
\begin{align*}
-\Delta \phi +\phi&=f+\phi \text{ in }\Omega, \\
\Dnu \phi &=g \text{ on }\Gamma_{a} \cup \Gamma_{N} \cup \Gampl, 
\end{align*}
we only have 
\begin{equation}
\begin{aligned}
&\Vert \nabla p^n_{t} \Vert_{L^{2}(\zeroT L^{2}(\Omega){)}} \\
&\leq 
C_\Delta( 
C^\Omega_{L^2,H^{-1}}\Vert \Delta p^n_{t} \Vert_{L^{2}(\zeroT L^{2}(\Omega)} +
\Vert \Dnu p^n_{t} \Vert_{L^{2}(\zeroT H^{-1/2}(\Gamma_{a}\cup\Gamma_{N}\cup\Gampl)}
+\Vert p_{t} \Vert_{L^{2}(\zeroT H^{-1}(\Omega))})
\end{aligned}
\end{equation} 
which requires an additional estimate to control the $L^{2}$ norm of $p_{t}$. 
For this purpose we use the third term in \eqref{enest0}.

The term II in \eqref{eqpid} is bounded by the plate damping term with coefficient $\beta$  in \eqref{enest1}.

We recall that $\Delpl$ is the Laplace-Beltrami operator equipped with homogeneous Dirichlet boundary conditions, cf. \eqref{BCw}, which by elliptic regularity allows us to recover the full $H^2$ norm of $\wtil^{n}$. 

{
Combining \eqref{eqpid} with \eqref{enest1} {and the control of the $L^2$ norm by either Poincar\'e-Friedrichs or Step 2.b} yields the energy estimate
\begin{equation}\label{enest_eqp_n}
\begin{aligned}
&\Vert p^n_{t} \Vert_{L^\infty(0,t;H^1(\Omega))}^2
+(1+b)\Vert \Delta p^n \Vert_{L^\infty(0,t;L^2(\Omega))}^{2}
+ b\Vert \Delta p^n_{t} \Vert_{L^2(0,t;L^2(\Omega))}^{2}
+ \Vert \Delta p^n \Vert_{L^2(0,t;L^2(\Omega))}^{2}\\ 
&+ \Vert \Dnu p^n \Vert_{L^\infty(0,t;L^{2}(\Gamma_{a}))}^{2} 
+ \Vert \Dnu p^n_{t} \Vert_{L^2(0,t;L^{2}(\Gamma_{a}))}^{2}\\
&+ \Vert \wtil^n_{t} \Vert_{L^\infty(\zeroT L^{2}(\Gampl))}^{2}
+ \Vert \Delpl \wtil^n \Vert_{L^\infty(\zeroT L^{2}(\Gampl))}^{2}
+ {\Vert \Delpl \wtil^n \Vert_{L^2(\zeroT L^{2}(\Gampl))}^{2}}\\
&{}\damppl{+ \beta \Vert (-\Delpl)^{\gamma/2} \wtil^n_{t} \Vert_{L^{2}(0,t;L^{2}(\Gampl))}^{2}
+\beta \Vert (-\Delpl)^{\gamma/2} \wtil^n \Vert_{L^{\infty}(0,t;L^{2}(\Gampl))}^{2}}
\\
&\leq 
C \Bigl(\Vert p_1 \Vert_{H^1(\Omega)}^2 + (1+b)\Vert \Delta p_0 \Vert_{L^{2}(\Omega)}^2 
+ \Vert \Dnu p_0 \Vert_{L^{2}(\Gamma_{a})}^{2}
+ \Vert \wtil_1 \Vert_{L^{2}(\Gampl)}^{2} + \Vert \Delpl \wtil_0 \Vert_{L^{2}(\Gampl)}^{2}\\
&\qquad\qquad
\damppl{+\beta \Vert (-\Delpl)^{\gamma/2} \wtil_0 \Vert_{L^{2}(\Gampl)}^{2}}
+ F[f,\tilde{h}] 
+ \|\tilde{h}\|_{L^1(0,t;\HminustwoGampl)}^2 
\Bigr)
\end{aligned}
\end{equation}
whose weak limit can be written as \eqref{enest_eqp_E}.
}

\medskip

\noindent{\bf \underline{Step {3}} weak limits:}\\
According to Step 1, for each $n\in\mathbb{N}$, $\nabla p^{n}_{tt}$ and $\wtil^{n}_{tt}$ exist and lie in $L^2(\zeroT L^2(\Omega))$ and $L^2(\zeroT L^2(\Gampl))$, respectively (even though we cannot estimate these norms uniformly in $n$). Therefore, we can take the $L^2(0,T)$ inner product of \eqref{ODEp}, \eqref{ODEwtil} (that is, the Galerkin projection of \eqref{eqn:var_weak}) with arbitrary $C_c^\infty(0,T)$ functions $v$ and integrate by parts with respect to time to arrive at
\begin{equation}\label{var_weak_Galerkin}
\begin{aligned}
&\text{for all }(\phi^{n},\psi^{n})\in V^n, \ v\in C^\infty(0,T), \, v(T)=0\, : \\
&\int_0^T\Bigl\{\int_\Omega \Bigl(-\nabla p^{n}_{t}\cdot\nabla \phi^{n} v' +\tfrac{c^2}{\alpha}\Delta p^{n}\,\Delta\phi^{n}v+\tfrac{b}{\alpha}
\Delta p^{n}_t\,\Delta \phi^{n}v\Bigr)\, dx\\
&\qquad +\tfrac{\rho}{\kappa}\int_{\Gampl} \Bigl(-\rho \wtil^{n}_{t}\psi^{n} v'+\delta\Delpl\wtil^{n}\, \Delpl\psi^{n} v
\damppl{+\beta (-\Delpl)^\gamma\wtil^{n}_t\,\psi^{n} v}\Bigr) \,dS
+\int_{\Gamma_{a}} c\partial_\nu p^{n}_t\, \partial_\nu \psi^{n} v\, dS\Bigr\}\,dt\\
&= \int_0^T \Bigl\{\int_\Omega \tfrac{f}{\alpha}\,(-\Delta \phi^{n})\,dx +\tfrac{\rho}{\kappa}\int_{\Gampl} \tilde{h}\, \psi^{n}\, dS 
\Bigr\}\, v\,dt 
+ \Bigl(\int_\Omega \nabla p^{n}_1\cdot\nabla \phi^{n}\, dx +\int_{\Gampl} {\tfrac{ \rho^{2}}{\kappa}}  \Bigl(-\wtil^{n}_1\psi^{n}\,dS\Bigr)v(0)\,.
\end{aligned}
\end{equation}
The energy estimates from Step 2 allow us to extract a subsequence of the Galerkin approximations that weakly *  converges in $U$ (which is the dual of a separable space) to some $(p,\wtil)\in U$. Taking limits along this subsequence in \eqref{var_weak_Galerkin} 
relying on Lemma~\ref{lem:HDelta}
yields \eqref{eqn:var_weak} and by weak* lower semicontinuity of the energy functional on the left hand side of \eqref{enest}, the limit $(p,\wtil)$ also satisfies the energy estimate \eqref{enest}.

\medskip

\noindent{\bf \underline{Step {4}} attainment of the initial conditions:}\\
By uniform boundedness of 
$(p^{n},\wtil^{n})$ 
in $L^\infty(\zeroT {H_{\Delta}^2(\Omega)} \times H^2(\Gampl))\cap C([0,T],H^1(\Omega)\times L^2(\Gampl))
$, respectively, using \cite[Lemma 3.1.7]{zheng2004nonlinear} we can conclude that along the subsequence from Step 3 we have weak convergence of $(p^{n}(0),\wtil^{n}(0))$ in ${H_{\Delta}^2(\Omega)} \times H^2(\Gampl)$ to $(p(0),\wtil(0))$ and since at the same time $(p^{n}(0),\wtil^{n}(0))$ (the Galerkin projected initial data) converges to $(p_0,\wtil_0)$ even strongly in ${H_{\Delta}^2(\Omega)} \times H^2(\Gampl)$, the initial conditions are satisfied by $(p,\wtil)$ in ${H_{\Delta}^2(\Omega)} \times H^2(\Gampl)$.

\medskip

\noindent{{\bf \underline{Step {5}} higher regularity in time; {proof of (iii)}:}}\\
To prove (iii) we use the fact that by time differentiation, $(p',\tilde{w}'):=(p^{(n)}_t,\tilde{w}^n_t)$ solves \eqref{eqn:var} with $(f,\tilde{h})$ replaced by $(f',\tilde{h}')$, where $f'=f_t-\frac{\alpha_t}{\alpha}(f+c^2\Delta p^{n}+b\Delta p^{n}_t)$, $\tilde{h}'=\tilde{h}_t$.
Applying (i) with 
\[
\begin{aligned}
\Vert f'\Vert_{L^1(\zeroT L^2(\Omega))}
&\leq \Vert f_t\Vert_{L^1(\zeroT L^2(\Omega))}
+\Vert \tfrac{\alpha_t}{\alpha}\Vert_{L^1(\zeroT L^\infty(\Omega))}
\Vert f+c^2\Delta p^{n}+b\Delta p^{n}_t\Vert_{L^\infty(\zeroT L^2(\Omega))}\\
&\leq \Vert f_t\Vert_{L^1(\zeroT L^2(\Omega))}
+\bar{\alpha}\Bigl(\Vert f \Vert_{L^\infty(\zeroT L^2(\Omega))}
+ C \Bigl( \mathcal{E}[p,\wtil](0) + F[f,\tilde{h}]\Bigr)\Bigr)
\Bigr)
\end{aligned}
\]
and taking weak limits implies the assertion.

\medskip

\noindent{\bf \underline{Step {6}} uniqueness:}\\
In view of linearity it only remains to prove that the solution to the homogeneous equation vanishes.
To this end, note that for $(p,\wtil)\in U$ we can perform the testing from Step 2 of the proof of Proposition~\ref{prop:wellposed_lin_weak} to the infinite dimensional version of \eqref{eqn:var}. In the homogeneous case $f=0$, 
$\tilde{h}=0$, $(p_0,\wtil_0)=(0,0)$, $(p_1,\wtil_1)=(0,0)$, this results in $\mathcal{E}[p,\wtil]\equiv0$, thus $(p,\wtil)\equiv0$. 

\medskip

\noindent{{\bf \underline{Step 7}} proof of (iv):}\\
The proof of (iv) goes analogously to the one of (ii), 
{by testing the time differentiated equation (cf. Step 5) with $(p^n_t,\wtil^n_t)$.}
\end{proof}

\begin{Remark}[exponential decay]
In particular, if $f=0$, $\tilde{h}=0$, 
the energy estimate \eqref{enest_eqp_E} in Proposition~\ref{prop:wellposed_lin_weak} implies exponential decay of $\Vert \nabla p_{t} \Vert_{L^2(\Omega)}^2+\Vert \Delta p \Vert_{L^2(\Omega)}^2$ 
and --- via \eqref{BCptil} as well as  
$\Vert \Dnu{p} \Vert_{H^{-1/2}(\Gampl)} \leq C^{tr}_{1/2} \Bigl(\Vert \nabla p \Vert_{L^{2}(\Omega)} + \Vert \Delta p \Vert_{L^{2}(\Omega)}\Bigr)$, cf. Lemma~\ref{lem:div} --- 
also of $\Vert\wtil\Vert_{H^{-1/2}(\Gampl)}$.
\end{Remark}

To extend the result to inhomogeneous Dirichlet, Neumann, and impedance conditions, 
we make use of the decomposition $p=\bar{p}+\tilde{p}$, estimating $\bar{p}$ by Lemma~\ref{lem:extension} and $\tilde{p}$ by Proposition~\ref{prop:wellposed_lin_weak}. To obtain energy estimates on $p=\bar{p}+\tilde{p}$, we use the fact that $\sqrt{\Ep}$ is a seminorm, in particular it satisfies the triangle inequality. For the definitions of $\mathcal{E}$, $\Ep$, $F[f,\tilde{h}]$ and $G[a,g_D,g_N]$, we refer to \eqref{energy}, \eqref{F}, \eqref{G}.

\begin{Corollary}\label{cor:wellposed_lin}
Let, the regularity assumptions on the data of Lemma~\ref{lem:extension} and of Proposition~\ref{prop:wellposed_lin_weak} (i), (iii) 
as well as the compatibility conditions 
\begin{equation}\label{compat_cor}
\begin{aligned}
p_1+c\Dnu{p_0}=a(0), \ p_2+c\Dnu{p_1}=a_t(0)\ &\text{ on }\Gamma_a\\
p_0 =g_D(0), \ p_1 =g_{D\,t}(0), \ p_2 =g_{D\,tt}(0)\ &\text{ on }\Gamma_D\\
\Dnu{p_0} =g_N(0), \ \Dnu{p_1} =g_{N\,t}(0)\ \Dnu{p_2} =g_{N\,tt}(0)\ &\text{ on }\Gamma_N\\
\partial_\nu p_0 = -\rho\wtil_0, \  \partial_\nu p_1 = -\rho\wtil_1, \  \partial_\nu p_2 = -\rho\wtil_2\ &\text{ on }\Gampl
\end{aligned}
\end{equation}
where $p_2 := \tfrac{1}{\alpha(0)}\Bigl(c^2\Delta p_{0}+b\Delta p_1+f(0)\Bigr)$,
$\tilde{w}_2\:=\frac{1}{\rho}\Bigl(-\delta\Delpl^2\tilde{w}_0\damppl{-\beta(-\Delpl)^\gamma\tilde{w}_1}+\kappa p_2+\tilde{h}(0)\Bigr)$

be satisfied.

Then there exists a solution $(p,\wtil)$ to \eqref{pwlin} in $U'$ and the estimate 
\begin{equation}\label{enest_prime_inhom}
\begin{aligned}
\Vert\mathcal{E}_1[p,\wtil]\Vert_{L^\infty(0,T)}
\leq C \Bigl( \mathcal{E}_1[p,\wtil](0) + F[f,\tilde{h}] + F[f_t,\tilde{h}_t]
+ G[a,g_D,g_N]+ G[a_t,g_{D\,t},g_{N\,t}]\Bigr)
\end{aligned}
\end{equation}
holds.
The solution is unique in $U'$.

If additionally the assumptions of Proposition~\ref{prop:wellposed_lin_weak} (ii), (iv) hold, then 
\begin{equation}\label{enest_prime_inhom_eqp}
\begin{aligned}
&\Vert\mathcal{E}_1[p,\wtil]\Vert_{L^\infty(0,T)}
+\Vert\mathcal{E}_1[p,\wtil]\Vert_{L^1(0,T)} \\
&\leq 
C \Bigl(\mathcal{E}_1[p,\wtil](0)
+ b \Vert \Delta p_0 \Vert_{L^2(\Omega)}^{2}
+ b \Vert \Delta p_1 \Vert_{L^2(\Omega)}^{2}
+\Vert\Dnu{p_0}\Vert_{L^2(\Gamma_a)}^2
+\Vert\Dnu{p_1}\Vert_{L^2(\Gamma_a)}^2
\\
&\qquad\qquad\damppl{+\beta \Vert (-\Delpl)^{\gamma/2} \wtil_0 \Vert_{L^{2}(\Gampl)}^{2}}
\damppl{+\beta \Vert (-\Delpl)^{\gamma/2} \wtil_1 \Vert_{L^{2}(\Gampl)}^{2}}
\\
&\qquad\qquad+ F[f,\tilde{h}]  + F[f_t,\tilde{h}_t]
+ G[a,g_D,g_N]+ G[a_t,g_{D\,t},g_{N\,t}]
\Bigr)
\end{aligned}
\end{equation}
The constant $C$ depends on the coefficients $b$, $c$, $\rho$, $\delta$, $\kappa$, 
$\Vert\frac{1}{\alpha}\Vert_{L^\infty(\zeroT L^\infty(\Omega))}$,  
$\Vert\alpha_t\Vert_{L^1(\zeroT L^\infty(\Omega))}$, but not directly on $T$.
\end{Corollary}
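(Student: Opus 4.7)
The plan is to carry out the decomposition $p=\bar{p}+\tilde{p}$ already set up before Lemma~\ref{lem:extension}: the piece $\bar{p}$ absorbs the inhomogeneous boundary data $(a,g_D,g_N)$ by solving the auxiliary IBVP \eqref{pbar} with zero initial data and vanishing coupling trace, while $(\tilde{p},\wtil)$ solves \eqref{pwtil} with homogeneous boundary data on $\Gamma_a\cup\Gamma_D\cup\Gamma_N$, original initial data $(p_0,p_1,\wtil_0,\wtil_1)$, and modified plate forcing $\tilde{h}+\kappa\bar{p}_{tt}$. The first step is to invoke Lemma~\ref{lem:extension} directly to obtain $\bar{p}\in W^{3,\infty}(\zeroT H^1(\Omega))\cap H^3(\zeroT H^2_\Delta(\Omega))$ together with the estimate \eqref{enest_eqp_pbar} controlling $\mathcal{E}_1[\bar{p}]$ and its time integral by $G[a,g_D,g_N]+G[a_t,g_{D\,t},g_{N\,t}]$.

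Next, I verify that the reduced system \eqref{pwtil} (with $\tilde{h}$ replaced by $\tilde{h}+\kappa\bar{p}_{tt}$) satisfies the hypotheses of Proposition~\ref{prop:wellposed_lin_weak} (i), (iii). Since $\bar{p}(0)=\bar{p}_t(0)=0$, the reduced initial data of $\tilde{p}$ coincides with $(p_0,p_1)$; the compatibility conditions \eqref{eqn:compat} and \eqref{eqn:compat_wtil} for the homogeneous subproblem follow from \eqref{compat_cor} and the facts that $\Dnu{\bar{p}_j(0)}=0$ on $\Gampl$ and $\bar{p}_j(0)=0$ on $\Gamma_D$, $\Dnu{\bar{p}_j(0)}=0$ on $\Gamma_N$ for $j=0,1,2$. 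The new forcing $\tilde{h}+\kappa\bar{p}_{tt}$ lies in $W^{1,1}(\zeroT L^2(\Gampl))$ because $\bar{p}_{tt},\bar{p}_{ttt}\in L^\infty(\zeroT H^1(\Omega))$ via Lemma~\ref{lem:extension}, whose $L^2(\Gampl)$ traces are then controlled by the boundary-data quantity $G[a,g_D,g_N]+G[a_t,g_{D\,t},g_{N\,t}]$ via \eqref{enest_eqp_pbar}. Applying Proposition~\ref{prop:wellposed_lin_weak} (iii) yields existence of $(\tilde{p},\wtil)\in U'$ with
\[
\mathcal{E}_1[\tilde{p},\wtil](t)\leq C\Bigl(\mathcal{E}_1[\tilde{p},\wtil](0)+F[f,\tilde{h}]+F[f_t,\tilde{h}_t]+G[a,g_D,g_N]+G[a_t,g_{D\,t},g_{N\,t}]\Bigr).
\]

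To combine these into an estimate on $p=\bar{p}+\tilde{p}$, I use that $\sqrt{\Ep}$ and $\sqrt{\Ew}$ are seminorms satisfying the triangle inequality, so $\mathcal{E}_1[p,\wtil]\leq 2\mathcal{E}_1[\bar{p},0]+2\mathcal{E}_1[\tilde{p},\wtil]$ pointwise in $t$; both terms on the right are already controlled, giving \eqref{enest_prime_inhom}. For the second conclusion \eqref{enest_prime_inhom_eqp}, I additionally invoke Proposition~\ref{prop:wellposed_lin_weak} (iv) on the reduced system to obtain an $L^1$-in-time bound on $\mathcal{E}_1[\tilde{p},\wtil]$, and add the corresponding time integral of $\mathcal{E}_1[\bar{p}]$ that is delivered by the left-hand side of \eqref{enest_eqp_pbar}. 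Uniqueness in $U'$ follows because the difference of two solutions to \eqref{pwlin} with identical data solves the homogeneous version of \eqref{pwtil} with zero boundary data and zero initial data, so the uniqueness part of Proposition~\ref{prop:wellposed_lin_weak} applies directly.

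The main obstacle I anticipate is bookkeeping of the compatibility conditions and of the temporal regularity chain: one must verify that $\bar{p}_{tt}|_{\Gampl}$ and its time derivative sit in $L^1(\zeroT L^2(\Gampl))$ with norms bounded by the stated $G$-quantities, which is why Lemma~\ref{lem:extension} was formulated with the somewhat heavy $W^{3,\infty}(\zeroT H^1(\Omega))$ temporal regularity on $\bar{p}$. All other steps are essentially combinations of linear estimates already proved.
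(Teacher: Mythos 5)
Your proposal is correct and follows essentially the same route as the paper: the paper's own (very brief) argument is precisely the decomposition $p=\bar{p}+\tilde{p}$, with $\bar{p}$ estimated by Lemma~\ref{lem:extension}, the reduced problem \eqref{pwtil} (with plate forcing $\tilde{h}+\kappa\bar{p}_{tt}$) handled by Proposition~\ref{prop:wellposed_lin_weak} (iii)/(iv), and the two pieces combined via the triangle inequality for the seminorm $\sqrt{\Ep}$. You even spell out the point the paper only alludes to, namely that the $W^{3,\infty}$-in-time regularity of $\bar{p}$ is exactly what makes $\tilde{h}+\kappa\bar{p}_{tt}\in W^{1,1}(\zeroT L^{2}(\Gampl))$ and lets the compatibility conditions \eqref{eqn:compat}, \eqref{eqn:compat_wtil} for the homogenized subproblem follow from \eqref{compat_cor}.
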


\section{{Local and Global in Time Well-posedness of} the Nonlinear Model} \label{sec:nl}

To {first of all} prove local in time well-posedness of the quasilinear coupled system
\begin{equation}\label{pw}
\begin{aligned}
(1-2k p)\, p_{tt}  - c^{2}\Delta p - b \Delta \tilde{p}_{t}  &= 2k (p_{t})^{2} ~~\mbox{in}~~ \Omega \times [0,T] \\
 c\Dnu{ p}+p_{t} &= a  ~~\mbox{in}~~ \Gamma_{a} \times [0,T]  \\
 p&= g_D  ~~\mbox{on}~~ \Gamma_{D} \times [0,T] \\
\Dnu{ p}&=g_N ~~\mbox{on}~~ \Gamma_{N} \times [0,T] \\
\Dnu{ p} &= - \rho \wtil  ~~\mbox{on}~~ \Gampl \times [0,T] \\
\rho\wtil_{tt} + \delta\Delpl^{2} \wtil \damppl{+\beta(-\Delpl)^{\gamma} \wtil_{t}} &= \kappa p_{tt} +\tilde{h}  ~~\mbox{on}~~ \Gampl \times [0,T] \\
\tilde{p}(0)&= p_{0}, ~~\tilde{p}_{t}(0)= p_{1}.
\end{aligned}
\end{equation}
we construct $(p,\wtil)$ as a fixed point of the map 
\[
\Lambda: W\mapsto U', \quad (q,v) \to (p, \wtil) \text{ solving \eqref{pwlin} with $\alpha =1-2k q$, $f=2k (q_{t})^{2}$} 
\]
on the following subset $W$ of the solution space $U'$ (cf. \eqref{Uprime}, \eqref{U})
\begin{align*}
W = \{ ( q, v) \in U': & \Vert q \Vert_{L^\infty(\zeroT L^\infty(\Omega))} \leq M_{1},\
  	{\Vert\mathcal{E}[q,v]\Vert_{L^\infty(0,T)}} \leq M_{2}, \ 
	{\Vert\mathcal{E}[q_t,v_t]\Vert_{L^\infty(0,T)}}\leq M_{3},\\
 & (q(0),q_{t}(0))= (p_{0}, p_{1}), \
  (v(0),v_{t}(0))= (\wtil_{0}, \wtil_{1})\\
& c\Dnu{q}+q=a\text{ on }\Gamma_a, \ 
	q=g_D \text{ on }\Gamma_D, \ 
	\Dnu{q}=g_N \text{ on }\Gamma_N, \ 
	\Dnu{q}=-\rho v \text{ on }\Gampl 
 \},
\end{align*}
where $0<M_{1} <\frac{1}{2k}$ and $0<M_{2}$, $0<M_{3}$  are to be chosen in a suitable way, where the first of these requirements is supposed to avoid degeneracy of the coefficient $\alpha =1-2k q$.

For the latter purpose, a crucial step in the proof invariance of the set $W$ under the map $\Lambda$ is to control the $L^\infty([0,T]\times \Omega)$ norm of $q$ in terms of its energy.
This is here impeded by the fact that the reduced boundary regularity limits applicability of elliptic regularity in the sense that we cannot conclude $H^2(\Omega)\subseteq H^2_\Delta(\Omega)$ (and then use continuity of the embedding $H^2(\Omega)\to L^\infty(\Omega)$).
We thus proceed with some results allowing us to obtain an $L^\infty(\Omega)$ bound on $p(t)$ from reasonable assumptions on the boundary data.

{
To this end, in case of nonempty Dirichlet boundary, we make use of the following result, 
\begin{Lemma}\label{Plum}(\cite[Theorem 5]{Plum1992})
Suppose that $\Omega$, $\Gamma_D$ and $\partial\Omega\setminus\Gamma_D$ are regular in the sense that 
for any $r$ in a dense subset of $L^2(\Omega)$, the elliptic problem $-\Delta u + u= r$ has an $H^2(\Omega)$ solution such that $u\vert_{\Gamma_D}$, $\Dnu{u}\vert_{\Gamma_D}$ are essentially bounded
and $z\in H^2(\Omega)$ exist such that
\[
\text{essinf}_{x\in\Omega} z(x) >0, \quad
\text{esssup}_{x\in\Omega} \Delta z(x) <\infty, \quad
z\geq1 \text{ a.e. on }\Gamma_D, \quad
\Dnu{z}\geq1 \text{ a.e. on }\partial\Omega\setminus\Gamma_D
\]
Then there exist $K>0$ such that any solution $u$ to 
\begin{equation}\label{mixedbc}
\begin{aligned}
-\Delta u + u&= \ff \textup{ in }\Omega\\
u &= \gg_D \textup{ on }\Gamma_D\\
\Dnu{u} &= \gg_N \textup{ on }\partial\Omega\setminus\Gamma_D
\end{aligned}
\end{equation}
satisfies the estimate
\[
\Vert u\Vert_{L^\infty(\Omega)}\leq K (\Vert \ff\Vert_{L^2(\Omega)} +\Vert \gg_D\Vert_{L^\infty(\Gamma_D)}+\Vert \gg_N\Vert_{L^\infty(\Gamma_D)})
\]
\end{Lemma}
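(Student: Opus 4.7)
The plan is to reduce the $L^\infty$ estimate to a uniform bound on the Green's function of the adjoint mixed problem, and to obtain that bound by comparison with the given barrier $z$. Fix $x_0\in\Omega$ and let $G(\cdot,x_0)$ denote the Green's function satisfying, in the distributional sense,
\begin{equation*}
-\Delta G + G = \delta_{x_0} \text{ in } \Omega, \qquad G = 0 \text{ on } \Gamma_D, \qquad \Dnu{G} = 0 \text{ on } \partial\Omega \setminus \Gamma_D.
\end{equation*}
Testing \eqref{mixedbc} against $G$ and integrating by parts twice formally yields the representation
\begin{equation*}
u(x_0) = \int_\Omega \ff\, G\, dx - \int_{\Gamma_D} \gg_D\, \Dnu{G}\, dS + \int_{\partial\Omega \setminus \Gamma_D} \gg_N\, G\, dS,
\end{equation*}
so the claim reduces to showing that $\Vert G(\cdot,x_0)\Vert_{L^2(\Omega)}$, $\Vert \Dnu{G}(\cdot,x_0)\Vert_{L^1(\Gamma_D)}$, and $\Vert G(\cdot,x_0)\Vert_{L^1(\partial\Omega \setminus \Gamma_D)}$ are bounded by constants independent of $x_0$.

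The core step is to use $z$ as a pointwise majorant of $G$ outside a small ball $B_\varepsilon(x_0)$. From the hypotheses $\inf z > 0$ and $\sup \Delta z < \infty$ I would construct a function of the form $\Phi = K_0 z + K_1$ with constants chosen so that $-\Delta\Phi + \Phi \geq 0$ pointwise in $\Omega$, $\Phi \geq 0 = G$ on $\Gamma_D$ (which uses $z\geq 1$ there and $K_1>0$), $\Dnu{\Phi}\geq 0 = \Dnu{G}$ on $\partial\Omega\setminus\Gamma_D$ (which uses $\Dnu{z}\geq 1$), and $\Phi\geq G$ on $\partial B_\varepsilon(x_0)$ (where $G\sim \varepsilon^{-1}$). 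The weak maximum principle on $\Omega\setminus B_\varepsilon(x_0)$ then gives $G\leq \Phi$ there, and inside $B_\varepsilon(x_0)$ the Green's function is controlled by the Newton potential $1/(4\pi |x-x_0|)$ up to bounded corrections. Since this potential is locally $L^2$ in three space dimensions, one obtains $\Vert G\Vert_{L^2(\Omega)}\leq K$ uniformly in $x_0$. The $L^1$ bounds on the boundary traces of $G$ and $\Dnu{G}$ are handled analogously: both boundary pieces are at positive distance from $x_0$, and a trace inequality applied to the already controlled $G$ on $\Omega\setminus B_\varepsilon(x_0)$ — together with the interior elliptic regularity for the $(-\Delta + 1)$ problem away from the singularity — yields uniform bounds.

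The principal obstacle is the absence of classical $H^2$ elliptic regularity at the interface between $\Gamma_D$ and $\partial\Omega\setminus\Gamma_D$, which prevents a naive Sobolev-embedding argument and leaves $\Dnu{G}\vert_{\Gamma_D}$ a priori undefined as an $L^1$ object. This is precisely what the regularity hypothesis on $(\Omega,\Gamma_D,\partial\Omega\setminus\Gamma_D)$ is tailored to repair: it guarantees that the mixed problem $-\Delta u + u = r$ has essentially bounded boundary data for a dense class of right-hand sides, so the representation formula extends by approximation and the boundary pairings make sense. The barrier $z$ then substitutes for the missing global regularity by providing an explicit geometric enclosure of $G$, from which the three uniform Green's-function estimates — and hence the claimed $L^\infty$ bound on $u$ — follow.
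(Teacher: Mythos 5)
First, note that the paper itself does not prove this statement: it is quoted as \cite[Theorem 5]{Plum1992}, so the only ``proof'' on record is the citation, and your argument has to stand on its own. As written it has a genuine gap at its core. Your representation formula requires the Green's function $G(\cdot,x_0)$ of the mixed problem to satisfy, uniformly in $x_0$, $\partial_\nu G(\cdot,x_0)\vert_{\Gamma_D}\in L^1(\Gamma_D)$ and $G(\cdot,x_0)\vert_{\partial\Omega\setminus\Gamma_D}\in L^1$; otherwise the pairings with $\gg_D,\gg_N\in L^\infty$ are not even defined. Under the lemma's weak hypotheses (only $C^1$/Lipschitz geometry, the density assumption and the bound function $z$) this is precisely the hard part: near the junction of $\Gamma_D$ and the Neumann part, solutions of mixed problems generically fail to be in $H^2$, so $\partial_\nu G\vert_{\Gamma_D}$ is a priori only an $H^{-1/2}$-type distribution. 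Your proposed fix --- interior elliptic regularity away from $x_0$ plus a trace inequality --- cannot deliver an $L^1(\Gamma_D)$ bound on $\partial_\nu G$, since a trace of $\nabla G$ needs regularity up to the boundary, which is exactly what is missing at the Dirichlet--Neumann interface. The appeal to the density hypothesis ``by approximation'' does not close this either: approximating $\delta_{x_0}$ by elements $r_n$ of the dense set produces solutions whose traces are essentially bounded, but with no uniform bound in $n$, so you cannot pass to the limit in the boundary terms. In short, the uniform Green's-function estimates you reduce to are at least as hard as the statement being proved, and the hypotheses of the lemma give you no direct handle on them.

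There are also smaller unproven steps in the barrier part. With $\Phi=K_0z+K_1$ you need $-\Delta\Phi+\Phi\ge 0$, but the hypotheses allow $-\Delta z+z$ to be negative (only $\operatorname{essinf}z>0$ and $\operatorname{esssup}\Delta z<\infty$ are assumed), so $K_1$ must in addition dominate $K_0\bigl(\operatorname{esssup}\Delta z-\operatorname{essinf}z\bigr)^+$; moreover the bound $G\lesssim\varepsilon^{-1}$ on $\partial B_\varepsilon(x_0)$ and the weak maximum principle for the mixed problem on $\Omega\setminus B_\varepsilon(x_0)$ both need justification at this boundary regularity, and even once granted they only control the volume term $\int_\Omega \ff\,G\,dx$, not the two boundary terms, which are the crux. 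Plum's hypotheses (a dense class of right-hand sides $r$ whose $H^2$ solutions have essentially bounded traces, together with the bound function $z$) are tailored to an argument that avoids the Green's function altogether, working by duality with these regular test problems and comparison with $z$; if you want a self-contained proof you should follow that route, or simply cite \cite{Plum1992} as the paper does.
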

We mention in passing that \cite[Theorem 5]{Plum1992} also gives an explicit expression of $K$ in terms of $z$, as well as examples of domains satisfying these conditions referring also to \cite[Chapter 3, Section 9] {LadyzhenskayaUraltseva}.
We also point to \cite{Dauge1992,DekkersRozanovaTeplyaev,Savare1997}.
}

\begin{Lemma}\label{Linfty}
Let $s\in(0,\frac12)$, $\ff\in L^2(\Omega)$, {$\gg_D\in L^\infty(\Gamma_D)$,} ${\gg_N}\in X_{\gg}$, and either
\begin{itemize}
\item[(a)] $\Gamma_D\not=\emptyset$ and the regularity conditions of Lemma~\ref{Plum} on $\partial\Omega$, $\Gamma_D$ hold and $X_{\gg}= H^s(\partial\Omega\setminus\Gamma_D)\cap L^\infty(\partial\Omega\setminus\Gamma_D)$ or
\item[(b)] $\Gamma_D=\emptyset$, $X_{\gg}= H^s(\partial\Omega)$. 
\end{itemize}
Then there exist a constant $K$  independent of ${\gg_D}$, ${\gg_N}$, $\ff$, such that for any solution $u$ to \eqref{mixedbc}, the estimate
\[
\Vert u\Vert_{L^\infty(\Omega)}\leq K (\Vert \ff\Vert_{L^2(\Omega)} 
{
+\Vert \gg_D\Vert_{L^\infty(\Gamma_D)}+\Vert \gg_N\Vert_{X_{\gg}}
})
\]
holds. In case (b) we additionally have 
\[
\Vert u\Vert_{H^{3/2+s}(\Omega)}\leq K (\Vert \ff\Vert_{L^2(\Omega)} 
+\Vert \gg_N\Vert_{X_{\gg}}
).
\]
\end{Lemma}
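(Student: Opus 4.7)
The plan is to treat the two cases separately: case (a) reduces directly to Lemma~\ref{Plum}, while case (b) is handled via fractional elliptic shift regularity for the pure Neumann problem together with a Sobolev embedding.

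For case (a), I would note that the hypotheses on $\Omega$, $\Gamma_D$, and $\partial\Omega\setminus\Gamma_D$ required by Lemma~\ref{Plum} are assumed to hold, so any solution $u$ to \eqref{mixedbc} satisfies
\[
\Vert u\Vert_{L^\infty(\Omega)}\leq K(\Vert \ff\Vert_{L^2(\Omega)}+\Vert \gg_D\Vert_{L^\infty(\Gamma_D)}+\Vert \gg_N\Vert_{L^\infty(\partial\Omega\setminus\Gamma_D)}).
\]
Since $X_\gg=H^s(\partial\Omega\setminus\Gamma_D)\cap L^\infty(\partial\Omega\setminus\Gamma_D)$ embeds continuously into $L^\infty(\partial\Omega\setminus\Gamma_D)$ by construction, the claim in case (a) follows immediately, with a constant absorbing the norm of that embedding.

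For case (b), the problem becomes the pure Neumann problem
\[
-\Delta u+u=\ff\ \text{in }\Omega,\qquad \Dnu{u}=\gg_N\ \text{on }\partial\Omega,
\]
which admits a unique weak solution $u\in H^1(\Omega)$ by Lax--Milgram, the $+u$ term providing coercivity without any compatibility condition on $\gg_N$. I would then raise the regularity to $H^{3/2+s}(\Omega)$ for $s\in(0,1/2)$ by invoking a fractional elliptic shift estimate on Lipschitz/$C^1$ domains, as in \cite{Savare1997}, yielding
\[
\Vert u\Vert_{H^{3/2+s}(\Omega)}\leq K\bigl(\Vert \ff\Vert_{L^2(\Omega)}+\Vert \gg_N\Vert_{H^s(\partial\Omega)}\bigr).
\]
The $L^\infty$ bound then follows from the Sobolev embedding $H^{3/2+s}(\Omega)\hookrightarrow L^\infty(\Omega)$, valid because $3/2+s>d/2=3/2$ in the ambient dimension $d=3$.

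The main technical obstacle is justifying the shift estimate on a merely $C^1$ boundary. The restriction $s<1/2$ is exactly what is needed to keep the target regularity strictly below $H^2(\Omega)$, the scale at which the classical shift theorem demands $C^{1,1}$ domains; below this scale, the results of Savar\'e type cover our situation. Once this estimate is secured, the remainder of the argument is a routine embedding, so the entire proof reduces to correctly citing the appropriate elliptic regularity and $L^\infty$-estimate results for the two boundary condition regimes.
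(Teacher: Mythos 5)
Your proposal is correct and follows essentially the same route as the paper: case (a) is a direct application of Lemma~\ref{Plum}, and case (b) uses standard elliptic regularity for the Neumann problem to reach $H^{3/2+s}(\Omega)$ combined with the embedding $H^{3/2+s}(\Omega)\to L^\infty(\Omega)$. Your write-up simply makes explicit (Lax--Milgram solvability, the Savar\'e-type shift estimate, and the role of $s<1/2$) what the paper leaves as a citation to standard results.
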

\begin{proof}
In case (a) we apply Lemma~\ref{Plum}.
In case (b) the assertion follows from standard results for the Laplace Neumann problem together with continuity of the embedding $H^{3/2+s}(\Omega)\to L^\infty(\Omega)$
\end{proof}

We use Lemma~\ref{Linfty} in order to bound $\|\alpha\|_{L^\infty}$, $\|\frac{1}{\alpha}\|_{L^\infty}$, $\|\alpha_t\|_{L^1(\zeroT L^\infty(\Omega))}$ and in case of $\Gamma_D=\emptyset$ also 
$\|\nabla\alpha\|_{L^2(\zeroT L^3(\Omega))}$, 
$\|\nabla\alpha\|_{L^\infty(\zeroT L^3(\Omega))}$, 
as required by Proposition~\ref{prop:wellposed_lin_weak}.

\begin{Lemma}
\label{alpha}
Let $M_{1}\leq \frac{1}{4k}$ and $M_{2}(1+T^{1/2})$, $M_{3}(1+T)$ as well as 
$\Vert g_D\Vert_{L^\infty(\zeroT H^{1+s}(\Gamma_D))}$, 
$\Vert g_N\Vert_{L^\infty(\zeroT H^s(\Gamma_N))}$, 
{$\Vert g_{N\,t}\Vert_{L^1(\zeroT H^s(\Gamma_N))}$} 
be small enough for some $s\in(0,1/2)$, where 
\\
in case (a) $\Gamma_D\not=\emptyset$ we assume that $\Gamma_a=\emptyset$ and the regularity conditions from  Lemma~\ref{Plum} on $\partial\Omega$, $\Gamma_D$ on $\partial\Omega$, $\Gamma_D$ are satisfied 

Then for any $(q,v) \in W$,
the functions $\alpha = 1-2kq$, $f = 2kq_t^2$ satisfy the following.
\begin{enumerate}
\item[1.]
For all $(t,x) \in \Omega \times [0,T]$, we have $\frac{1}{2}  \leq  \alpha(t,x) \leq \frac{3}{2}$.
\item[2.]
$ \alpha_{t}  \in L^{1}(\zeroT  L^{\infty}( \Omega))$ with 
$\bar{\alpha} = \Vert\frac{\alpha_t}{\alpha}\Vert_{L^1(\zeroT L^\infty(\Omega))}<1/2$
\item[3.]
In case $\Gamma_D=\emptyset$, $\nabla\alpha  \in L^{2}(\zeroT  L^{3}( \Omega))$ with 
{
$\tilde{\alpha}_2 = \Vert\alpha^{-3/2}\nabla \alpha \Vert_{L^2(\zeroT L^3(\Omega))}<1/4$,\\
$\tilde{\alpha}_\infty = \Vert\alpha^{-3/2}\nabla \alpha \Vert_{L^\infty(\zeroT L^3(\Omega))}<1/4$
}
\item[4.] 
$f\in W^{1,1}(\zeroT L^2(\Omega))$.
\end{enumerate}
\end{Lemma}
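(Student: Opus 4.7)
The plan is to dispatch the four items in turn: item 1 follows directly from the membership $q\in W$; items 2 and 3 rest on Lemma~\ref{Linfty} applied pointwise in time (to $q_t$ and to $q$ respectively); item 4 is then a consequence of items 1--2 together with a Sobolev embedding. Item 1 is immediate: since $\Vert q\Vert_{L^\infty(\zeroT L^\infty(\Omega))}\leq M_1\leq 1/(4k)$, we have $|2kq(t,x)|\leq 1/2$ a.e., so $\alpha(t,x)=1-2kq(t,x)\in[1/2,3/2]$; in particular $\alpha\geq 1/2$ and $\alpha^{-3/2}\leq 2^{3/2}$ a.e.

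For item 2, I would write $\alpha_t=-2kq_t$ and bound $\Vert q_t\Vert_{L^1(\zeroT L^\infty(\Omega))}$. At each time $t$, $q_t$ solves $(-\Delta+1)u=(-\Delta+1)q_t(t)$ in $\Omega$ with boundary conditions obtained by differentiating the trace constraints that define $W$: $u=g_{D,t}$ on $\Gamma_D$, $\Dnu u=g_{N,t}$ on $\Gamma_N$, $\Dnu u=-\rho v_t$ on $\Gampl$ (case (a) excludes $\Gamma_a$). Lemma~\ref{Linfty} then gives, pointwise in $t$,
\[
\Vert q_t(t)\Vert_{L^\infty(\Omega)}\leq K\bigl(\Vert\Delta q_t(t)\Vert_{L^2(\Omega)}+\Vert q_t(t)\Vert_{L^2(\Omega)}+\Vert g_{D,t}(t)\Vert_{L^\infty(\Gamma_D)}+\Vert g_{N,t}(t)\Vert_{X_{\gg}}+\rho\Vert v_t(t)\Vert_{X_{\gg}(\Gampl)}\bigr),
\]
which I integrate over $(0,T)$ via Cauchy--Schwarz, using the damping part of $\mathcal{E}[q,v]\leq M_2$ (which yields $\int_0^T\Vert\Delta q_t\Vert_{L^2}\,dt\leq\sqrt{T/b}\,M_2^{1/2}$), the pointwise bound $\Vert q_t\Vert_{L^\infty(L^2)}\leq M_2^{1/2}$ (which gives $T\,M_2^{1/2}$ for the second term), and the embedding $H^2(\Gampl)\hookrightarrow X_{\gg}(\Gampl)$ applied to $v_t\in L^\infty(\zeroT H^2(\Gampl))$, the latter arising from $(q_t,v_t)\in U$ with energy bound $M_3$. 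Combining with the hypothesized smallness of $M_2(1+T^{1/2})$, $M_3(1+T)$ and of the data norms yields $\bar\alpha\leq 4k\Vert q_t\Vert_{L^1(L^\infty)}<1/2$.

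For item 3 (only in case $\Gamma_D=\emptyset$), I would apply Lemma~\ref{Linfty}(b) pointwise in time to $q$ with source $(-\Delta+1)q$ and Neumann data $(g_N,-\rho v)$; the Sobolev embedding $H^{3/2+s}(\Omega)\hookrightarrow W^{1,3}(\Omega)$ (which is valid for $s\in(0,1/2)$ in three space dimensions, since $H^{1/2+s}\hookrightarrow L^3$) controls $\Vert\nabla q(t)\Vert_{L^3}$ by $K(M_2^{1/2}+\text{data})$. Combined with $\alpha^{-3/2}\leq 2^{3/2}$ from item 1, this gives the $L^\infty$-in-time estimate on $\alpha^{-3/2}\nabla\alpha$ in $L^3$ and, after integration in time with an extra $\sqrt T$, the corresponding $L^2$-in-time estimate; both $\tilde\alpha_\infty$ and $\tilde\alpha_2$ can then be pushed below $1/4$ by the smallness of $M_2(1+T^{1/2})$ and of the Neumann data. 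For item 4, $\Vert f(t)\Vert_{L^2}=2k\Vert q_t(t)\Vert_{L^4}^2\leq 2kC\Vert q_t(t)\Vert_{H^1}^2\leq 2kCM_2$ via $H^1(\Omega)\hookrightarrow L^4(\Omega)$, so $f\in L^1(\zeroT L^2(\Omega))$, while $\Vert f_t\Vert_{L^1(L^2)}\leq 4k\Vert q_t\Vert_{L^1(L^\infty)}\Vert q_{tt}\Vert_{L^\infty(L^2)}$, where the first factor is controlled by item 2 and the second by the energy bound $M_3$.

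The main obstacle is item 2, because the boundary trace of $q_t$ on $\Gampl$ is $v_t$ rather than $v$; this forces the use of both energies $M_2$ and $M_3$ and of the space $U'$ to obtain an $X_{\gg}$-trace bound on $v_t$. The time integration must also be structured so that the combined $T$-dependence matches exactly the hypothesized smallness of $M_2(1+T^{1/2})$ and $M_3(1+T)$, and the smallness of the time derivatives of the Dirichlet and Neumann data has to be used to push $\bar\alpha$ strictly below $1/2$.
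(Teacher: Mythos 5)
Your overall strategy coincides with the paper's: item 1 directly from the definition of $W$, items 2 and 3 by applying Lemma~\ref{Linfty} pointwise in time to $q_t$ and $q$ with right-hand side $-\Delta q_t+q_t$ (resp.\ $-\Delta q+q$) and boundary data read off from the trace constraints in $W$, and item 4 via $H^1(\Omega)\hookrightarrow L^4(\Omega)$ together with the $L^1(0,T;L^\infty(\Omega))$ bound on $q_t$ from item 2. However, there is a concrete gap: in case (b), $\Gamma_D=\emptyset$, the absorbing boundary $\Gamma_a$ is in general nonempty, and you drop it entirely from the elliptic problems for $q_t$ and $q$ (your displayed pointwise estimate has no $\Gamma_a$ term). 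On $\Gamma_a$ the Neumann datum is not a prescribed function but involves the unknown: differentiating the constraint $c\Dnu{q}+q_t=a$ gives $\Dnu{q_t}=\tfrac1c(a_t-q_{tt})$ (and $\Dnu{q}=\tfrac1c(a-q_t)$ for item 3), so it cannot be "assumed small" like $g_{N\,t}$; it must be estimated. The paper does this in \eqref{frakgfrakf}: the $H^s(\Gamma_a)$ norm of $q_{tt}$ is controlled by the trace of $q_{tt}\in H^{s+1/2}(\Omega)\supset H^1(\Omega)$, which the energy $\mathcal{E}[q_t,v_t]\le M_3$ provides. Related to this, applying Lemma~\ref{Linfty}(b) requires the piecewise-defined boundary datum (pieces on $\Gamma_a$, $\Gampl$, $\Gamma_N$) to lie in $H^s(\partial\Omega)$ globally; this concatenation is exactly where the restriction $s<1/2$ enters (the paper cites Grisvard for this), and your write-up does not address it.

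Two smaller points. First, your bound $\int_0^T\Vert\Delta q_t\Vert_{L^2}\,dt\le\sqrt{T/b}\,M_2^{1/2}$ via the $b$-damping part of $\mathcal{E}[q,v]$ produces a quantity of size $\sqrt{TM_2}$, which is not made small by the hypothesized smallness of $M_2(1+T^{1/2})$ when $T$ is large; the paper instead uses $\Vert\Delta q_t(t)\Vert_{L^2}^2\le\mathcal{E}[q_t,v_t](t)$, tying this term to the smallness of $(1+T)M_3$, which is what the lemma's hypotheses are calibrated to (and what the $T$-uniform global argument in Theorem~\ref{theo:wellposed} relies on). Second, in case (a) your estimate contains $\Vert g_{D\,t}(t)\Vert_{L^\infty(\Gamma_D)}$, whose time-integrated smallness is not among the hypotheses; the paper glosses over the same term, so this is a shared imprecision rather than a defect specific to your argument, but you should at least flag that case (a) also uses $\Gamma_a=\emptyset$ precisely because the energy does not give $q_{tt}(t)$ in $H^r(\Gamma_a)$, $r>1$, i.e.\ no $L^\infty(\Gamma_a)$ control as required by Lemma~\ref{Plum}.
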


\begin{proof}

The estimate 1. immediately follows from the definition of $W$ with $M_{1}\leq \frac{1}{4k}$ and 
\[
1-\Vert\alpha-1\Vert_{L^\infty(\zeroT L^\infty(\Omega))}\leq \alpha(t,x)\leq 
1+\Vert\alpha-1\Vert_{L^\infty(\zeroT L^\infty(\Omega))}
\]
with 
\[
\Vert\alpha-1\Vert_{L^\infty(\zeroT L^\infty(\Omega))}
= 2k\Vert q\Vert_{L^\infty(\zeroT L^\infty(\Omega))} \leq 2k M_{1}\leq\frac12
\]

\medskip

To prove 2. we invoke the two cases of Lemma~\ref{Linfty} separately:

In case (b) $\Gamma_D=\emptyset$, relying on the control of the $L^2(\Omega)$ norm of $q_t$ and $\Delta q_t$ as established in the proof of Proposition~\ref{prop:wellposed_lin_weak}), we can simply set
\begin{equation}\label{frakgfrakf}
{\gg_N}=\begin{cases} 
-\tfrac{1}{c} q_{tt}(t) &\textup{ on }\Gamma_a\\
-\rho v_t(t) &\textup{ on }\Gampl\\
g_{N\, t}&\textup{ on }\Gamma_N
\end{cases}
\qquad \ff = -\Delta q_t(t)+q_t(t).
\end{equation}
to obtain
\begin{equation}\label{alphat}
\begin{aligned}
&\Vert\alpha_t\Vert_{L^1(\zeroT L^\infty(\Omega))}
=2k\Vert q_t\Vert_{L^1(\zeroT L^\infty(\Omega))}\\
&\leq 2k K\Bigl(\Vert -\Delta q_t+q_t\Vert_{L^1(\zeroT L^2(\Omega))} 
+C^{tr}_{H^s}\tfrac{1}{c}\Vert q_{tt}\Vert_{L^1(\zeroT H^{s+1/2}(\Omega))}
+\rho\Vert v_{t}\Vert_{L^1(\zeroT H^s(\Gampl))}
+\Vert g_{N\,t}\Vert_{L^1(\zeroT H^s(\Gamma_N))}
\Bigr)\\
&\leq 2kK (\tilde{C} \sqrt{\sup_{t\in(0,T)}\mathcal{E}[q_t,v_t]+\int_0^T\mathcal{E}[q_t,v_t]\, ds} +  \Vert g_{N\,t}\Vert_{L^1(\zeroT H^s(\Gamma_N))})\\
&\leq 2kK (\tilde{C} \sqrt{(1+T)\sup_{t\in(0,T)}\mathcal{E}[q_t,v_t]} + \Vert g_{N\,t}\Vert_{L^1(\zeroT H^s(\Gamma_N))}).
\end{aligned}
\end{equation}
Here we have used the fact that for $s\in[0,1/2)$ the concatenation of $H^s$ boundary functions is again an $H^s$ boundary function, see, e.g., 
\cite[Corollary 1.4.4.5.]{Grisvard} combined with the arguments in \cite[Appendix]{BK-Peichl}.
Thus by choosing 
{$s\geq0$}
$(1+T)M_3$ and $\Vert g_{N\,t}\Vert_{L^1(\zeroT H^s(\Gamma_N))}$ small enough, so that $2kK (\tilde{C} M_3 + \Vert g_{N\,t}\Vert_{L^1(\zeroT H^s(\Gamma_N))})\leq 1/2$, we obtain 2.

Likewise 
{
\[
\begin{aligned}
\Vert\nabla\alpha\Vert_{L^\infty(\zeroT L^3(\Omega))}
&\leq C^\Omega_{H^{s+1/2},L^3}\Vert\alpha\Vert_{L^\infty(\zeroT H^{s+3/2}(\Omega))}
= 2k  C^\Omega_{H^{s+1/2},L^3}\Vert q\Vert_{L^\infty(\zeroT H^{s+3/2}(\Omega))}\\
&\leq 2kK  C^\Omega_{H^{s+1/2},L^3} (\tilde{C} (1+T)\sup_{t\in(0,T)}\mathcal{E}[q,v](t) + \Vert g_N\Vert_{L^\infty(\zeroT H^s(\Gamma_N)}).
\end{aligned}
\]
and similarly for $\Vert\nabla\alpha\Vert_{L^2(\zeroT L^3(\Omega))}$ in $\tilde{\alpha}_2$}
thus implying 3. provided $(1+T)M_2$, $(1+T)M_3$, $\Vert g_N\Vert_{L^\infty(\zeroT H^s(\Gamma_N))}$ are small enough.

\medskip

In case (a) $\Gamma_D\not=\emptyset$, the energy controls $\Vert q_t(t)\|_{L^2(\Omega)}$ due to Poincar\'e's inequality and we set ${\gg_N}$, $\ff$ according to \eqref{frakgfrakf}.
From $v_t(t)\in H^r(\Gampl)\subseteq L^\infty(\Gampl)$ and $q_{tt}\in H^r(\Gamma_a)\subseteq L^\infty(\Gamma_a)$ with $r>1$ we would obtain ${\gg_N}\in H^s(\Gamma_N)\cap L^\infty(\Gamma_N)$ for any $s\in(0,\frac12)$ and could use  Lemma~\ref{Linfty} (a) to conclude 2.

However, the available energy estimates do not provide $q_{tt}(t)\in H^r(\Gamma_a)$ for any $r>1$.
To achieve this, we would have to derive higher order energy estimates, which would lead to stronger assumptions on smoothness of the data.
In order to avoid this problem we assume $\Gamma_a=\emptyset$ in case (a) $\Gamma_D\not=\emptyset$.  
Therewith it suffices to bound 
\[
\Vert v_t(t)\Vert_{H^r(\Gampl)}\leq C^{\Gampl}_{H^2,H^r}\mathcal{E}[q_t,v_t](t), 
\]
and the assertion follows analogously to case (b).

\medskip

For $f$ and $f_t$ we obtain 
\[
\Vert f\Vert_{L^1(\zeroT L^2(\Omega))}
= 2k \Vert q_t\Vert_{L^1(\zeroT L^4(\Omega))}^2
\leq 2k (C^{\Omega}_{H^1,H^4})^2 \Vert q_t\Vert_{L^1(\zeroT H^1(\Omega))}^2,
\]
\[
\Vert f_t\Vert_{L^1(\zeroT L^2(\Omega))}
= 4k \Vert q_{tt} q_t\Vert_{L^1(\zeroT L^2(\Omega))}
\leq 4k \Vert q_{tt}\Vert_{L^\infty(\zeroT L^2(\Omega))} \Vert q_t\Vert_{L^1(\zeroT L^\infty(\Omega))}
\]
where {$\Vert q_t\Vert_{L^1(\zeroT H^1(\Omega))}$} and $\Vert q_{tt}\Vert_{L^\infty(\zeroT L^2(\Omega))}$
can be bounded by some constant multiple of 
{$\int_0^T\mathcal{E}[q,v](t)\, dt$} and $\sup_{t\in(0,T)}\mathcal{E}[q_t,v_t](t)$, respectively, and 
$\Vert q_t\Vert_{L^1(\zeroT L^\infty(\Omega))}$ has already been bounded in the proof of 2.
\end{proof}

\begin{Theorem}\label{theo:wellposed}
In case $\Gamma_D\not=\emptyset$ assume that $\Gamma_a=\emptyset$ and the regularity conditions from  Lemma~\ref{Plum} on $\partial\Omega$, $\Gamma_D$ are satisfied.

Let $T>0$ be arbitrary, then there exists constants  $m_{1} >0$, $m_{2} >0$ and $m_{3} >0$ possibly depending on $T$ 
such that 
for  initial conditions $(p_{0}, p_{1}, \wtil_{0}, \wtil_{1})$ and data {$(a,g_D,g_N,\tilde{h})$} satisfying the regularity assumptions of Corollary~\ref{cor:wellposed_lin} 
as well as the compatibility conditions \eqref{compat_cor}
with $p_2 := \tfrac{1}{1-2kp_0}\Bigl(c^2\Delta p_{0}+b\Delta p_1+2kp_1^2\Bigr)$,
$\tilde{w}_2\:=\frac{1}{\rho}\Bigl(-\delta\Delpl^2\tilde{w}_0\damppl{-\beta(-\Delpl)^\gamma\tilde{w}_1}+\kappa p_2+\tilde{h}(0)\Bigr)$
and 
\[
\begin{aligned}
&\mathcal{E}_1[p,\wtil](0) \leq m_{1},\\
&F[{2k\bar{p}_t^2,\tilde{h}+\bar{p}_{tt}}]
+F[{4k\bar{p}_t\bar{p}_{tt},\tilde{h}_t+\bar{p}_{ttt}}] + G[a,g_D,g_N]  {+ G[a,g_{D\, t},g_{N\, t}]} \leq m_{2}, \\ 
&\Vert g_D\Vert_{L^\infty(\zeroT H^{1+s}(\Gamma_D))} 
+ \Vert g_N\Vert_{L^\infty(\zeroT H^s(\Gamma_N))}
{+\Vert g_{N\,t}\Vert_{L^1(\zeroT H^s(\Gamma_N))}} 
\leq m_{3}
\end{aligned}
\]
for some $s\in(0,1/2)$,
there exists a unique solution $(p, \wtil)$ in the set $W$ to 
\eqref{pw}.

This solution satisfies the energy estimate
\begin{equation}\label{enest_eqp_E1_nl}
\begin{aligned}
&\mathcal{E}_1[p,\wtil](t)+\int_0^t \mathcal{E}_1[p,\wtil](s)\, ds\leq 
C \Bigl(\int_0^t \mathcal{E}_1[p,\wtil](s)\, ds\Bigr)^2 \\
&+C \Bigl(\mathcal{E}_1[p,\wtil](0)
+ b \Vert \Delta p_0 \Vert_{L^2(\Omega)}^{2}
+ b \Vert \Delta p_1 \Vert_{L^2(\Omega)}^{2}
+\Vert\Dnu{p_0}\Vert_{L^2(\Gamma_a)}^2
+\Vert\Dnu{p_1}\Vert_{L^2(\Gamma_a)}^2
\\
&\qquad\qquad\damppl{+\beta \Vert (-\Delpl)^{\gamma/2} \wtil_0 \Vert_{L^{2}(\Gampl)}^{2}}
\damppl{+\beta \Vert (-\Delpl)^{\gamma/2} \wtil_1 \Vert_{L^{2}(\Gampl)}^{2}}
+ F[0,\tilde{h}]  + F[0,\tilde{h}_t]
\Bigr)
\end{aligned}
\end{equation}

If additionally 
$\tilde{h}+\bar{p}_{tt}\in W^{1,1}(0,\infty; \HminustwoGampl)$, 
$g_{N\,t}\in W^{2,1}(0,{\infty};H^{-1/2}(\Gamma_N))$,
then $m_1,m_2,m_3$ can be chosen independently of $T$ and the solution exists globally in time, that is, the assertion above is valid with $T=\infty$.

{If additionally the forcing data $a,g_D,g_N,\tilde{h}$ vanish, then $\mathcal{E}_1[p,\wtil](t)$ decays exponentially
}
\end{Theorem}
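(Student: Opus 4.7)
The plan is to realize $(p,\wtil)$ as a fixed point of the map $\Lambda$ on the set $W$ via Banach's contraction principle, and then to separately extract global existence and exponential decay from the equipartition-style estimate \eqref{enest_prime_inhom_eqp}. Four steps are carried out: well-definedness of $\Lambda$, invariance $\Lambda(W)\subseteq W$, contractivity in a suitable metric, and finally the asymptotic analysis.

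First I would verify that $\Lambda$ is well defined on $W$. Given $(q,v)\in W$, Lemma~\ref{alpha} guarantees that the coefficient $\alpha=1-2kq$ is bounded away from zero, that $\bar{\alpha}<1/4$, and (in case $\Gamma_D=\emptyset$) that $\tilde{\alpha}_2,\tilde{\alpha}_\infty$ are small, while the source $f=2kq_t^2$ and its time derivative lie in $L^1(\zeroT L^2(\Omega))$. Combined with the compatibility conditions \eqref{compat_cor} (which, being imposed on the initial data, automatically propagate to the linearized problem with shifted right-hand side $\tilde{h}+\kappa\bar{p}_{tt}$), Corollary~\ref{cor:wellposed_lin} produces $(p,\wtil)=\Lambda(q,v)\in U'$ together with the estimate \eqref{enest_prime_inhom_eqp}. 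For invariance, I would insert the bounds from Lemma~\ref{alpha} on $F[f,\tilde{h}+\kappa\bar{p}_{tt}]$ and $F[f_t,\tilde{h}_t+\kappa\bar{p}_{ttt}]$; the $f$-contributions are quadratic in the energy and therefore absorbed as $o(M_2{+}M_3)$ for small $M_2,M_3$, while the $L^\infty$ bound $\Vert p\Vert_{L^\infty}\leq M_1$ is recovered from Lemma~\ref{Linfty} applied to the elliptic problem $-\Delta p+p=-p_{tt}+p+\tfrac{1}{\alpha}(c^2\Delta p+b\Delta p_t+f)/\alpha$ (or a suitable rearrangement) with the prescribed Dirichlet/Neumann/plate data. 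Choosing $M_1\leq\tfrac{1}{4k}$ and then $m_1,m_2,m_3,M_2,M_3$ small enough in the proper order, $\Lambda$ maps $W$ into itself.

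Contractivity is checked by taking $(q_i,v_i)\in W$ and subtracting the linearized systems satisfied by $(p_i,\wtil_i)=\Lambda(q_i,v_i)$. The difference $(P,V)$ solves the linearized problem with coefficient $\alpha=1-2kq_1$, zero initial and boundary data, and forcing
\[
f=2k(q_{1,t}{-}q_{2,t})(q_{1,t}{+}q_{2,t})+2k(q_1{-}q_2)\,p_{2,tt},\qquad \tilde{h}=0.
\]
Applying Corollary~\ref{cor:wellposed_lin} and using that $q_{i,t}$, $p_{2,tt}$ are bounded in the energy norm by $M_2,M_3$, one gets
\[
\Vert\mathcal{E}_1[P,V]\Vert_{L^\infty(0,T)}\leq C(M_2,M_3)\,\Vert\mathcal{E}_1[q_1{-}q_2,v_1{-}v_2]\Vert_{L^\infty(0,T)},
\]
with $C(M_2,M_3)\to0$ as $M_2,M_3\to0$. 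Hence $\Lambda$ is a contraction on $W$ in the energy metric and the Banach fixed point theorem delivers a unique $(p,\wtil)\in W$ solving \eqref{pw}. The energy estimate \eqref{enest_eqp_E1_nl} then follows by reinserting $\alpha=1-2kp$ and $f=2kp_t^2$ into \eqref{enest_prime_inhom_eqp} and estimating $F[f,\cdot]$, $F[f_t,\cdot]$ by the quadratic term $C(\int_0^t\mathcal{E}_1(s)\,ds)^2$.

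For global-in-time existence, the key observation is that the constant $C$ in \eqref{enest_prime_inhom_eqp} does not depend on $T$ and that the stronger $W^{1,1}(0,\infty;\HminustwoGampl)$ and $W^{2,1}(0,\infty;H^{-1/2}(\Gamma_N))$ assumptions make $F,G$ and their time-differentiated counterparts finite uniformly in $T$; the smallness thresholds $m_1,m_2,m_3$ and $M_1,M_2,M_3$ can therefore be chosen independently of $T$ and the fixed point argument is run on $[0,\infty)$. For exponential decay with vanishing forcing, \eqref{enest_eqp_E1_nl} reduces to $\mathcal{E}_1(t)+\int_0^t\mathcal{E}_1(s)\,ds\leq C(\int_0^t\mathcal{E}_1(s)\,ds)^2+C\mathcal{E}_1(0)$; smallness of $\mathcal{E}_1(0)$ absorbs the quadratic term into the left-hand side, giving $\int_0^\infty\mathcal{E}_1<\infty$, and rerunning the same estimate on an arbitrary interval $[s,t]$ yields $\mathcal{E}_1(t)+\int_s^t\mathcal{E}_1\leq C'\mathcal{E}_1(s)$, from which a standard Datko/Pazy-type argument produces exponential decay. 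The main obstacle throughout is the tight interplay between the $L^\infty$ control of $p$ (essential to avoid degeneracy of $1{-}2kp$) and the reduced elliptic regularity dictated by the mixed boundary conditions; this is precisely where the geometric hypotheses of Lemma~\ref{Plum} and the dichotomy $\Gamma_a=\emptyset$ vs.\ $\Gamma_D=\emptyset$ enter, and where the quadratic term on the right-hand side of \eqref{enest_eqp_E1_nl} must be absorbed with care in both the fixed-point step and the decay argument.
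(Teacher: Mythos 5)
Your overall architecture (fixed point of $\Lambda$ on $W$, invariance via Lemma~\ref{alpha} and Corollary~\ref{cor:wellposed_lin}, $L^\infty$ control via Lemma~\ref{Linfty}, quadratic absorption of $F[f,\cdot]$ for the final estimate) matches the paper, but your contraction step contains a genuine gap. You claim
$\Vert\mathcal{E}_1[\hat p,\hat w]\Vert_{L^\infty(0,T)}\leq C(M_2,M_3)\Vert\mathcal{E}_1[\hat q,\hat v]\Vert_{L^\infty(0,T)}$, i.e.\ contractivity in the $U'$-metric, and then invoke Banach's fixed point theorem on $W$. To estimate $\mathcal{E}_1$ of the difference you would have to apply the higher-order part (iii) of Proposition~\ref{prop:wellposed_lin_weak} (or Corollary~\ref{cor:wellposed_lin}), which requires $\hat f_t\in L^1(\zeroT L^2(\Omega))$ with a bound proportional to the difference norm. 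But with $\hat f= f_1-f_2+2k\hat q\, p_{2\,tt}$ one has $\hat f_t \ni 2k\hat q\, p_{2\,ttt}$, and $p_{2\,ttt}$ is controlled neither by $\mathcal{E}_1[p_2,\wtil_2]$ nor by membership of $(p_2,\wtil_2)$ in $W\subset U'$ (the energies \eqref{energy}, \eqref{E1} contain at most $\Delta p_{tt}$ and $p_{tt}$, never a third time derivative); moreover the homogeneous-data difference system need not satisfy the higher compatibility conditions demanded in (iii). So the contraction in the $\mathcal{E}_1$-metric cannot be established from the available linear estimates. The paper's remedy is exactly the standard one for quasilinear problems: prove contraction only in the weaker $U$-metric (only $\Vert\hat f\Vert_{L^1(\zeroT L^2(\Omega))}$ is needed, cf.\ \eqref{fhat}), which already gives uniqueness in $W$; then, since the Picard iterates stay in the ball $W$ of $U'$, use Banach--Alaoglu (weak-$*$ compactness of $W$ in $U'$) and uniqueness of limits to conclude that the $U$-limit actually lies in $W\subset U'$. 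Your proof as written skips this two-norm argument, and the step as stated would fail.

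A secondary point concerns the global-in-time statement: it is not enough to note that the constant in \eqref{enest_prime_inhom_eqp} and the data norms are $T$-independent. The smallness hypotheses of Lemma~\ref{alpha} involve quantities like $\Vert q_t\Vert_{L^1(\zeroT L^\infty(\Omega))}$ and $\tilde{\alpha}_2$, which in the proof are bounded by $\sqrt{(1+T)\sup_t\mathcal{E}_1}$-type expressions and hence degenerate as $T\to\infty$ unless one also controls $\int_0^T\mathcal{E}$ and $\int_0^T\mathcal{E}_1$. This is why the paper redefines $W$ for the global result, adding $\Vert\mathcal{E}[q,v]\Vert_{L^1(0,T)}$ and $\Vert\mathcal{E}[q_t,v_t]\Vert_{L^1(0,T)}$ to the constraints and feeding them through the equipartition estimates (ii), (iv); without this modification your choice of $M_1,M_2,M_3$ "independently of $T$" is not justified. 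Your Datko-type decay argument from $\mathcal{E}_1(t)+\int_s^t\mathcal{E}_1\leq C'\mathcal{E}_1(s)$ is fine and essentially the barrier-type argument the paper cites.
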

\begin{proof}
\noindent{\bf \underline{Step 1} Invariance of solution map:}\\
We let $(q,v) \in W$ and consider equation \eqref{pwlin}
with
$ \alpha = 1- 2kq$
and 
$ f = 2k (q_{t})^{2}$.
By Lemma \ref{alpha}, $\alpha$ and $f$ satisfy the assumptions of Proposition \ref{prop:wellposed_lin_weak} (i), (iii) and thus, due to our assumptions on the data there exists 
a solution $\Lambda(q,v)=(p, \wtil)\in U'$. 
To show that $\Lambda$ maps $W$ into itself for suitably chosen $m_{1}, m_{2} >0$, we employ the energy estimate \eqref{enest_prime_inhom} in Corollary ~\ref{cor:wellposed_lin}, that together with our assumptions on smallness of the data implies 
\[
\mathcal{E}[p,\wtil](t)+\mathcal{E}[p_t,\wtil_t](t) 
\leq C \Bigl(m_1+m_2).
\]
Choosing $m_1$, $m_2$ small enough thus allows us to conclude $\mathcal{E}[p,\wtil](t)\leq M_2$, $\mathcal{E}[p_t,\wtil_t](t)\leq M_3$
Additionally, analogously to Lemma~\ref{alpha} 2., we estimate the $L^\infty(\zeroT L^\infty(\Omega))$ norm of $p$ using Lemma ~\ref{Linfty} to obtain 
\[
\Vert p \Vert_{L^\infty(\zeroT L^\infty(\Omega))}\leq C( m_{1}+m_{2}+m_{3} )
\] 
{thus, by choosing $m_{1}, m_{2}, m_{3}$ small enough, $\Vert p \Vert_{L^\infty(\zeroT L^\infty(\Omega))}\leq M_{1}$.}

\noindent{\bf \underline{Step 2} Contraction estimates:}\\ 
We next show that the solution map $\Lambda: (q,v) \to (p,\wtil)$ is a contraction on $W$ in the topology of the space $U$.
In particular, let $(p_1,\wtil_1)$ and $(p_2, \wtil_2)$ be the solutions {of \eqref{pwlin}}corresponding to $(q_1,v_1)$ and $(q_2,v_2) $ in $W$. We denote the differences
of solutions by $(\hat{p}, \hat{w})$ and $(\hat{q},\hat{v})$; that is $\hat{p}=p_1-p_2$, $\hat{w} = \wtil_1-\wtil_2$, $\hat{q}=q_1-q_2$, $\hat{v}=v_1-v_2$. We also denote by $\alpha_1 = 1-2kq_1$, $\alpha_2 = 1-2kq_2$ while  $f_{1}= 2kq_{1\,t}^2$ and $f_2=2kq_{2\,t}^2$.
Then $(\hat{p}, \hat{w})$ satisfies \eqref{pwtil} with $(f,\tilde{h})$ replaced by $(\hat{f},0)$ where 
$\hat{f}=f_1-f_2-\frac{\alpha_1-\alpha_2}{\alpha_1\alpha_2}(c^2\Delta p_2+b\Delta p_{2\,t})$ and homogeneous initial conditions.
We apply Proposition~\ref{prop:wellposed_lin_weak} (i) with the estimate
\[
\begin{aligned}
\Vert\hat{f}\Vert_{L^1(\zeroT L^2(\Omega))}
\leq &2k \Vert q_{1\,t}+q_{2\,t}\Vert_{L^2(\zeroT L^4(\Omega))}\Vert \hat{q}_t\Vert_{L^2(\zeroT L^4(\Omega))}
\\
&+8k \Vert \hat{q}\Vert_{L^2(\zeroT L^\infty(\Omega))}(c^2{
\Vert\Delta p_2\Vert_{L^2(\zeroT L^2(\Omega))}}+b\Vert\Delta p_{2\,t}\Vert_{L^2(\zeroT L^2(\Omega))}),
\end{aligned}
\]
where we have used 
\[
f_1-f_2=2k (q_{1\,t}+q_{2\,t})(q_{1\,t}-q_{2\,t}),\qquad
\left|\frac{\alpha_1-\alpha_2}{\alpha_1\alpha_2}\right| \leq 4 |\alpha_1-\alpha_2| = 8k|q_1-q_2|,
\] 
Together with continuity of the embedding $H^1(\Omega)\to L^4(\Omega)$ and Lemma~\ref{Linfty} this yields
{
\begin{equation}\label{fhat}
\begin{aligned}
\Vert (\hat{p},\hat{w})\Vert_U 
=& \sqrt{\Vert\mathcal{E}[\hat{p},\hat{w}]\Vert_{L^\infty(0,T)}}
\leq \sqrt{C F[\hat{f},0]} = \sqrt{C} \Vert\hat{f}\Vert_{L^1(\zeroT L^2(\Omega))}\\
\leq& 2k  \sqrt{C} \Vert (\hat{q},0)\Vert_U 
\sqrt{ 
\mathcal{E}[q_1,0]+\mathcal{E}[q_2,0]
+\Vert\mathcal{E}[p_2,0]\Vert_{L^\infty(0,T)}+\Vert\mathcal{E}[p_2,0]\Vert_{L^2(0,T)}
}\\
\leq& 2k \sqrt{C} \Vert (\hat{q},\hat{v})\Vert_U (1+\sqrt{T})\sqrt{3M_2}.
\end{aligned}
\end{equation}
We then choose $M_{2}$ sufficiently small so that 
\[
2k \sqrt{C} (1+\sqrt{T})\sqrt{3M_2} < 1
\]
and we get a contraction estimate.
} 

\medskip

\noindent
{\bf Local in time well-posedness:}\\ 
The Picard sequence $(p_n,\wtil_n)$ therefore converges to a fixed point of $\Lambda$ (that is, a solution $(p,\wtil)$ to \eqref{pw}) in the norm topology of the weaker space $U$. 
Moreover, since $W$ is a ball in $U'$ and thus weakly-* compact in $U'$ due to the Banach-Alaoglu Theorem, we can  extract a weakly* convergent subsequence $(p_{n_k},\wtil_{n_k})$ with limit $(\bar{p},\bar{\wtil})\in W$. Due to uniqueness of limits we have $(p,\wtil)=(\bar{p},\bar{\wtil})\in W$.

The energy estimate \eqref{enest_eqp_E1_nl} follows from \eqref{enest_eqp_E1_nl} together with 
$F(f,\tilde{h}]=F[f,0]+F[0,\tilde{h}]$ cf. \eqref{F} and the estimate
\[
\begin{aligned}
F[f,0]  + F[f_t,0]
= \Vert f \Vert_{L^1(0,t;L^2(\Omega))}^{2} +\Vert f_t \Vert_{L^1(0,t;L^2(\Omega))}^{2}
& =4k^2 \Bigl(\Vert p_t\Vert_{L^2(0,t;L^4(\Omega))}^4+2\Vert p_t\, p_{tt}\Vert_{L^1(0,t;L^2(\Omega))}^2\Bigr)  \\
& \leq 12k^2 (C^\Omega_{H^1,L^4})^2 \Bigl(\int_0^t \mathcal{E}_1(s)\, ds\Bigr)^2
\end{aligned}
\]

\medskip

\noindent
{\bf Global in time well-posedness:}\\ 
If additionally 
$\tilde{h}\in W^{1,1}(0,\infty; \HminustwoGampl)$, 
$g_{N\,t}\in W^{2,1}(0,\infty;H^{-1/2}(\Gamma_N))$, 
we can replace the definition of $W$ by 
\begin{align*}
W = \{ ( q, v) \in U': & \Vert q \Vert_{L^\infty(\zeroT L^\infty(\Omega))} \leq M_{1},\
  {\Vert\mathcal{E}[q,v]\Vert_{L^\infty(0,T)}+\Vert\mathcal{E}[q,v]\Vert_{L^1(0,T)}} \leq M_{2}, \\ 
  &{\Vert\mathcal{E}[q_t,v_t]\Vert_{L^\infty(0,T)}+\Vert\mathcal{E}[q_t,v_t]\Vert_{L^1(0,T)}}\leq M_{3},\\
 & (q(0),q_{t}(0))= (p_{0}, p_{1}), \
  (v(0),v_{t}(0))= (\wtil_{0}, \wtil_{1})\\
& c\Dnu{q}+q=a\text{ on }\Gamma_a, \ 
	q=g_D \text{ on }\Gamma_D, \ 
	\Dnu{q}=g_N \text{ on }\Gamma_D, \ 
	\Dnu{q}=-\rho v \text{ on }\Gampl 
 \},
\end{align*}
and use the fact that Lemma~\ref{alpha} then holds with
$M_{2}$, $M_{3}$ as well as 
$\Vert g_D\Vert_{L^\infty(\zeroT H^{1+s}(\Gamma_D))}$, $\Vert g_N\Vert_{L^\infty(\zeroT H^s(\Gamma_N))}$, 
$\Vert g_{N\,t}\Vert_{L^1(\zeroT H^s(\Gamma_N))}$ 
small enough, cf., e.g., the next to last estimates in \eqref{alphat}, \eqref{fhat}.

Exponential decay follows from the energy estimate \eqref{enest_eqp_E1_nl} by barrier's method, cf., e.g., \cite{KL09Westervelt}.
\end{proof}

\begin{Remark}
\label{rem:regularity}
When dropping the possibility of having Dirichlet and absorbing boundary conditions, that is, consider the case $\partial\Omega=\Gamma_N\cup\Gampl$, $\partial\Gamma_N=\partial\Gampl$, we can actually achieve more spatial regularity.
Indeed, replacing part of the hinged boundary condition by the trace of the Neumann data  
\[
\rho\wtil=-g_N\vert_{\partial\Gamma_N} \, \quad \Delta \wtil =0 \quad \text{ on }\partial\Gampl,
\]
and assuming $g_N(t)\in H^{1/2}(\Gamma_N)$,
via \eqref{BCptil} we achieve $\Dnu{p}\in H^{1/2}(\partial\Omega)$ and thus, together with $p(t)\in H^2_\Delta(\Omega)$, recover $p(t)$ in $H^2(\Omega)$ by means of elliptic regularity. The same holds true for the first and second time derivative of $p$ so that we end up with the solution spaces
\[
\begin{aligned}
U=&(W^{1,\infty}(\zeroT H^1(\Omega))\cap H^1(\zeroT H^2(\Omega)))\times (W^{1,\infty}(\zeroT L^2(\Gampl))\cap L^\infty(\zeroT H^2(\Gampl)))\\
U'=&\{(p,\wtil)\in U\, : \, (p_t,\wtil_t)\in U\}
\end{aligned}
\]
in place of \eqref{U}, \eqref{Uprime}. 
\end{Remark}

{
\section{Outlook}\label{sec:outlook}
We have kept track of the damping coefficients $b$ and $\beta$ in the estimates in order to open up the possibility of considering vanishing damping (see, e.g., \cite{btozero} for Westervelt alone). While this appears hopeless for $b$, it might be possible to recover some of estimates in a $\beta$ uniform way.  
A similar question arises for boundary damping if we replace \eqref{BCa} by $p_{t} +\sigma\Dnu{p}=a$ considering $\sigma\to0$.
\\
As already mentioned above, $g_N$, and $h$ could be used as controls and an analysis of related optimization problems along the lines of, e.g., \cite{ClasonBK:2009} could be of interest.
\\
Likewise, optimizing the shape of $\Gampl$ could be of practical relevance. 
}
\section*{Appendix}
\begin{proof}(Lemma~\ref{lem:extension})\\  
Since the Galerkin discretiztion step can be done very similarly to, e.g., 
\cite{Nikolic:2015}, we focus on energy estimates here. 

Differentiating \eqref{pbar} twice with respect to time and testing with $-\frac{1}{\alpha}\Delta \bar{p}_{ttt}$, we obtain 
\[
\begin{aligned}
0=& \frac12 \frac{d}{dt} \Vert \nabla \bar{p}_{ttt} \Vert^{2} 
-\int_{\partial\Omega} \bar{p}_{tttt}\Dnu{\bar{p}_{ttt}}\, dS
-2\int_\Omega \frac{\alpha_t}{\alpha}\bar{p}_{ttt} \Delta \bar{p}_{ttt}\, dx
-\int_\Omega \frac{\alpha_{tt}}{\alpha}\bar{p}_{tt} \Delta \bar{p}_{ttt}\, dx
\\
&+\frac{1}{2} c^2 \frac{d}{dt} \left\Vert \frac{1}{\sqrt{\alpha}} \Delta \bar{p}_{tt}\right\Vert^2 
+\frac{1}{2} c^2 \int_\Omega \frac{\alpha_t}{\alpha^2} |\Delta \bar{p}_{tt}|^2\, dx
+b \left\Vert \frac{1}{\sqrt{\alpha}} \Delta \bar{p}_{ttt} \right\Vert^{2}
\end{aligned}
\]
where
\[
\begin{aligned}
-\int_{\partial\Omega} \bar{p}_{tttt}\Dnu{\bar{p}_{ttt}}\, dS
=& 
\frac{1}{c} \Bigl(\Vert\bar{p}_{tttt}\Vert_{L^2(\Gamma_a)}^2
-\int_{\Gamma_a}\bar{p}_{tttt}a_{ttt}\, dS\Bigr)
-\int_{\Gamma_D} g_{D\,tttt}\Dnu{\bar{p}_{ttt}}\, dS\\
&-\frac{d}{dt}\int_{\Gamma_N} \bar{p}_{ttt}g_{N\,ttt}\, dS
+\int_{\Gamma_N} \bar{p}_{ttt}g_{N\,tttt}\, dS
\end{aligned}
\]

Thus, integrating with respect to time we obtain
\begin{equation}\label{enest_pbarAppendix}
\begin{aligned}
&\frac12\Vert \nabla \bar{p}_{ttt} \Vert_{L^\infty(0,t;L^2(\Omega))}^{2} 
+ \frac12 c^2 \left\Vert \frac{1}{\sqrt{\alpha}} \Delta \bar{p}_{tt} \right\Vert_{L^\infty(0,t;L^2(\Omega))}^{2}
+ b \left\Vert \frac{1}{\sqrt{\alpha}} \Delta \bar{p}_{ttt} \right\Vert_{L^2(0,t;L^2(\Omega))}^{2}
+\frac{1}{c} \Vert \bar{p}_{tttt} \Vert_{L^2(0,t;L^2(\Gamma_a))}^{2}\\
&\leq
\frac{1}{c} \Vert \bar{p}_{tttt} \Vert_{L^2(0,t;L^2(\Gamma_a))}
\Vert a_{ttt} \Vert_{L^2(0,t;L^2(\Gamma_a))}
+\Vert \Dnu{\bar{p}_{ttt}}\Vert_{{L^\infty}(0,t;H^{-1/2}(\Gamma_D))} 
\Vert g_{D\,tttt}\Vert_{{L^1}(0,t;H^{1/2}(\Gamma_D))}
\\
&\quad 
+\Vert \bar{p}_{ttt}\Vert_{L^\infty(0,t;H^{1/2}(\Gamma_N))} 
\Bigl(\Vert g_{N\,ttt}\Vert_{L^\infty(0,t;H^{-1/2}(\Gamma_N))}
+\Vert g_{N\,tttt}\Vert_{L^1(0,t;H^{-1/2}(\Gamma_N))}\Bigr)
\\
&\quad
+c^2\tfrac{1}{\underline{\alpha}}
\Vert\tfrac{\alpha_{t}}{\alpha}\Vert_{L^1(\zeroT L^\infty(\Omega))}
\Vert\Delta \bar{p}_{tt}\Vert_{L^\infty(0,t;L^2(\Omega))}^2
+2 \Vert\tfrac{\alpha_{t}}{\alpha}\Vert_{L^2(\zeroT L^3(\Omega))}
\Vert\bar{p}_{ttt}\Vert_{L^\infty(0,t;L^6(\Omega))}
\Vert\Delta \bar{p}_{ttt}\Vert_{L^2(0,t;L^2(\Omega))}
\\
&\quad
+\Vert\tfrac{\alpha_{tt}}{\alpha}\Vert_{L^2(\zeroT L^3(\Omega))}
\Vert\bar{p}_{tt}\Vert_{L^\infty(0,t;L^6(\Omega))}^2
\Vert\Delta \bar{p}_{ttt}\Vert_{L^2(0,t;L^2(\Omega))}^2
\end{aligned}
\end{equation}
with $\tfrac{1}{\underline{\alpha}}=\left\Vert\frac{1}{\alpha}\right\Vert_{L^\infty(\zeroT L^\infty(\Omega))}$.

We also test with $\frac{1}{\alpha}\bar{p}_{ttt}$, which yields
\[
\begin{aligned}
0=& \frac12 \frac{d}{dt} \Vert \bar{p}_{ttt} \Vert^{2} 
-2\int_\Omega \frac{\alpha_t}{\alpha}\bar{p}_{ttt}^2 \, dx
-\int_\Omega \frac{\alpha_{tt}}{\alpha}\bar{p}_{tt} \bar{p}_{ttt}\, dx
\\&\quad
+\frac{1}{2} c^2 \frac{d}{dt} \left\Vert \frac{1}{\sqrt{\alpha}} \nabla \bar{p}_{tt}\right\Vert^2 
+\frac{1}{2} c^2 \int_\Omega \frac{\alpha_t}{\alpha^2} |\nabla \bar{p}_{tt}|^2\, dx
+b \left\Vert \frac{1}{\sqrt{\alpha}} \nabla \bar{p}_{ttt} \right\Vert^{2}\\
&-\int_\Omega (c^2\nabla{\bar{p}_{tt}}+b\nabla{\bar{p}_{ttt}})\cdot\nabla\alpha\,
\tfrac{1}{\alpha^2}p_{ttt}
-\int_{\partial\Omega} (c^2\Dnu{\bar{p}_{tt}}+b\Dnu{\bar{p}_{ttt}})\tfrac{1}{\alpha}p_{ttt} \, dS,
\end{aligned}
\]
where 
\[
\begin{aligned}
&\int_{\partial\Omega} (c^2\Dnu{\bar{p}_{tt}}+b\Dnu{\bar{p}_{ttt}})\tfrac{1}{\alpha}p_{ttt} \, dS
= c \Bigl(\Vert\tfrac{1}{\sqrt{\alpha}}\bar{p}_{ttt}\Vert_{L^2(\Gamma_a)}^2
-\int_{\Gamma_a}\tfrac{1}{\alpha}\bar{p}_{ttt}a_{tt}\, dS\Bigr)\\
&\quad+\frac{b}{c}\Bigl( \frac12\frac{d}{dt} \Vert\tfrac{1}{\sqrt{\alpha}}\bar{p}_{ttt}\Vert_{L^2(\Gamma_a)}^2
+\int_{\Gamma_a}\bigl(\tfrac{\alpha_t}{\alpha^2} |\bar{p}_{ttt}|^2
-\tfrac{1}{\alpha}\bar{p}_{ttt}a_{ttt}\bigr)\, dS\Bigr)\\
&
-\int_{\Gamma_D} (c^2\Dnu{\bar{p}_{tt}}+b\Dnu{\bar{p}_{ttt}})\tfrac{1}{\alpha}g_{D\,ttt}\, dS
-\int_{\Gamma_N} (c^2g_{N\,tt}+bg_{N\,ttt})\tfrac{1}{\alpha}\bar{p}_{ttt}\, dS,
\end{aligned}
\]
thus, after integration with respect to time,
\begin{equation}\label{enest_pbar0}
\begin{aligned}
&\frac12(1-2\bar{\alpha})\Vert \bar{p}_{ttt} \Vert_{L^\infty(0,t;L^2(\Omega))}^{2} 
+ \frac12 c^2(1-\bar{\alpha}) \left\Vert \frac{1}{\sqrt{\alpha}} \nabla \bar{p}_{tt} \right\Vert_{L^\infty(0,t;L^2(\Omega))}^{2}
+ b \left\Vert \frac{1}{\sqrt{\alpha}} \nabla \bar{p}_{ttt} \right\Vert_{L^2(0,t;L^2(\Omega))}^{2}\\
&\hspace*{5cm}+ c\Vert \frac{1}{\sqrt{\alpha}}\bar{p}_{ttt} \Vert_{L^2(0,t;L^2(\Gamma_a))}^{2}
+ \frac{b}{c}(1-\bar{\alpha})\Vert \frac{1}{\sqrt{\alpha}}\bar{p}_{ttt} \Vert_{L^\infty(0,t;L^2(\Gamma_a))}^{2}
\\
&\leq
\Vert\tfrac{\alpha_{tt}}{\alpha}\Vert_{L^2(\zeroT L^2(\Omega))}
\Vert\bar{p}_{tt}\Vert_{L^\infty(0,t;L^4(\Omega))}
\Vert\bar{p}_{ttt}\Vert_{L^2(0,t;L^4(\Omega))}
\\&\quad
+\tfrac{1}{\underline{\alpha}}\Vert\tfrac{\nabla\alpha}{\alpha}\Vert_{L^2(0,t;L^3(\Omega)}
\Vert\bar{p}_{ttt}\Vert_{L^\infty(0,t;L^6(\Omega))}
\Bigl(c^2\Vert\nabla\bar{p}_{tt}\Vert_{L^2(0,t;L^2(\Omega))}
+b\Vert\nabla\bar{p}_{ttt}\Vert_{L^2(0,t;L^2(\Omega))}
\Bigr)
\\&\quad
+c \Vert\tfrac{1}{\sqrt{\alpha}} \bar{p}_{ttt} \Vert_{L^2(0,t;L^2(\Gamma_a))}
\Vert \tfrac{1}{\sqrt{\alpha}} a_{tt} \Vert_{L^2(0,t;L^2(\Gamma_a))}
+\tfrac{b}{c} \Vert\tfrac{1}{\sqrt{\alpha}} \bar{p}_{ttt} \Vert_{L^2(0,t;L^2(\Gamma_a))}
\Vert \tfrac{1}{\sqrt{\alpha}} a_{ttt} \Vert_{L^2(0,t;L^2(\Gamma_a))}
\\&\quad
+\tfrac{1}{\underline{\alpha}}\Bigl(
{
c^2\Vert \Dnu{\bar{p}_{tt}}\Vert_{L^\infty(0,t;H^{-1/2}(\Gamma_D))} 
\Vert g_{D\,ttt}\Vert_{L^1(0,t;H^{1/2}(\Gamma_D))}
+b\Vert \Dnu{\bar{p}_{ttt}}\Vert_{L^\infty(0,t;H^{-1/2}(\Gamma_D))} 
\Vert g_{D\,ttt}\Vert_{L^1(0,t;H^{1/2}(\Gamma_D))}
}
\Bigr)
\\
&\quad 
+\tfrac{1}{\underline{\alpha}}\Vert \bar{p}_{ttt}\Vert_{L^\infty(0,t;H^{1/2}(\Gamma_N))} 
\Bigl(c^2\Vert g_{N\,tt}\Vert_{L^1(0,t;H^{-1/2}(\Gamma_N))}
+b\Vert g_{N\,ttt}\Vert_{L^1(0,t;H^{-1/2}(\Gamma_N))}\Bigr)
\end{aligned}
\end{equation}

{
In case $\Gamma_D\not=\emptyset$ we can estimate 
\[
\Vert \Dnu{\bar{p}_{ttt}}\Vert_{L^\infty(0,t;H^{-1/2}(\Gamma_D))}
\leq C^{tr}_{-1/2}
\Vert \bar{p}_{ttt}\Vert_{L^\infty(0,t;H^1)}
\leq C^{tr}_{-1/2} C^\Omega_{\Gamma_D}
\Bigl(\Vert \nabla \bar{p}_{ttt} \Vert_{L^\infty(0,t;L^2(\Omega))}^2+\Vert g_{D\,ttt} \Vert_{L^\infty(0,t;L^2(\Gamma_D))}^2\Bigr)^{1/2} 
\]
using \eqref{PF} and employing the higher order energy estimate \eqref{enest_pbarAppendix}.
\\ 
If $\Gamma_D=\emptyset$ an estimate of $\Vert \Dnu{\bar{p}_{ttt}}\Vert_{L^\infty(0,t;H^{-1/2}(\Gamma_D))}$
is anyway not needed and actually the lower order energy estimate \eqref{enest_pbar0} suffices to obtain $p_{ttt}\in L^1(\zeroT L^2(\Omega))$, which suffices for our purposes.
}

We {combine \eqref{enest_pbarAppendix}, \eqref{enest_pbar0},} assume $\bar{\alpha}=\Vert\tfrac{\alpha_{t}}{\alpha}\Vert_{L^1(\zeroT L^\infty(\Omega))}$  to be small enough and apply Young's inequality as well as continuity of the embedding $H^1(\Omega)\to L^6(\Omega)$, to arrive at the energy estimate
\[
\begin{aligned}
&\Vert \bar{p}_{ttt} \Vert_{L^\infty(0,t;H^1(\Omega))}^{2} 
+ \left\Vert \frac{1}{\sqrt{\alpha}} \Delta \bar{p}_{tt} \right\Vert_{L^\infty(0,t;L^2(\Omega))}^{2}
+ b\left\Vert \frac{1}{\sqrt{\alpha}} \Delta \bar{p}_{ttt} \right\Vert_{L^2(0,t;L^2(\Omega))}^{2}
+ \Vert \bar{p}_{tttt} \Vert_{L^2(0,t;L^2(\Gamma_a))}^{2}
\\
&\leq C \Bigl(\Vert a_{ttt} \Vert_{L^2(0,t;L^2(\Gamma_a))}^2
{+\Vert g_{D\,ttt}\Vert_{L^\infty(\zeroT H^{1/2}(\Gamma_D))}^2
+\Vert g_{D\,tttt}\Vert_{L^1(\zeroT H^{1/2}(\Gamma_D))}^2}
+\Vert g_{N\,ttt}\Vert_{W^{1,1}(0,t;H^{-1/2}(\Gamma_N))}^2
\Bigr)
\end{aligned}
\]

Analogous estimates obtained by testing the original equation and its time differentiated version with $-\frac{1}{\alpha}\Delta \bar{p}_{t}$ and $-\frac{1}{\alpha}\Delta \bar{p}_{tt}$, respectively, yield 
\begin{equation}\label{enest_pbar}
\Ep_1[\bar{p}](t)\leq C\Bigl(\Ep_1[\bar{p}](0)
+ G[a,g_D,g_N]+ G[a_{t},g_{D\,t},g_{N\,t}]
\Bigr).
\end{equation}

Likewise, with two equipartition of energy estimates obtained by testing the original equation with $-\frac{1}{\alpha}\Delta \bar{p}$ and its time differentiated version by $-\frac{1}{\alpha}\Delta \bar{p}_{t}$, we obtain \eqref{enest_eqp_pbar}.  

\end{proof}


\begin{thebibliography}{10}

\bibitem{AvalosGeredeli2016}
George Avalos and Pelin~G. Geredeli.
\newblock Uniform stability for solutions of a structural acoustics pde model
  with no added dissipative feedback.
\newblock {\em Mathematical Methods in the Applied Sciences}, 39:5497--5512,
  2016.

\bibitem{AvalosGeredeli2018}
George Avalos and Pelin~G. Geredeli.
\newblock Stability analysis of coupled structural acoustics pde models under
  thermal effects and with no additional dissipation.
\newblock {\em Mathematische Nachrichten}, 292(5):939--960, 2019.

\bibitem{BecklinRammaha2021}
Andrew~R. Becklin and Mohammad~A. Rammaha.
\newblock Hadamard well-posedness for a structure acoustic model with a
  supercritical source and damping terms.
\newblock {\em Evolution Equations and Control Theory}, 10(4):797--836, 2021.

\bibitem{Blackstock1963}
D.T. Blackstock.
\newblock Approximate equations governing finite-amplitude sound in
  thermoviscous fluids.
\newblock Tech Report GD/E Report GD-1463-52 (General Dynamics Corp.,
  Rochester, New York, 1963.

\bibitem{BrownCox:2019}
M.~D. Brown, E.~Z. Zhang, B.~E. Treeby, P.~C. Beard, and B.~T. Cox.
\newblock Reverberant cavity photoacoustic imaging.
\newblock {\em Optica}, 6(6):821--822, Jun 2019.

\bibitem{BrunnhuberJordan2016}
R.~Brunnhuber and P.M. Jordan.
\newblock {On the reduction of Blackstock's model of thermoviscous compressible
  flow via Becker's assumption}.
\newblock {\em Int. J. Non-Linear Mech.}, 78:131--132, 2016.

\bibitem{ClasonBK:2009}
Christian Clason, Barbara Kaltenbacher, and Slobodan Veljovi\'{c}.
\newblock Boundary optimal control of the {W}estervelt and the {K}uznetsov
  equations.
\newblock {\em J. Math. Anal. Appl.}, 356(2):738--751, 2009.

\bibitem{Crighton1979}
D.~G. Crighton.
\newblock Model equations of nonlinear acoustics.
\newblock {\em Ann. Rev. Fluid Mech.}, 11:11--33, 1979.

\bibitem{Dauge1992}
Monique Dauge.
\newblock Neumann and mixed problems on curvilinear polyhedra.
\newblock {\em Integral Equations Operator Theory}, 15(2):227--261, 1992.

\bibitem{DekkersRozanova2019}
A.~Dekkers and A.~Rozanova-Pierrat.
\newblock Cauchy problem for the {K}uznetsov equation.
\newblock {\em Discrete Contin. Dyn. Syst.}, 39(1):277--307, 2019.

\bibitem{DekkersRozanova2020}
A.~Dekkers and A.~Rozanova-Pierrat.
\newblock Models of nonlinear acoustics viewed as approximations of the
  {N}avier-{S}tokes and {E}uler compressible isentropic systems.
\newblock {\em Commun. Math. Sci.}, 18(8):2075--2119, 2020.

\bibitem{DekkersRozanovaTeplyaev}
Adrien Dekkers, Anna Rozanova-Pierrat, and Alexander Teplyaev.
\newblock Mixed boundary valued problems for linear and nonlinear wave
  equations in domains with fractal boundaries.
\newblock {\em Calc. Var. Partial Differential Equations}, 61(2):Paper No. 75,
  44, 2022.

\bibitem{DoerflerGernerSchnaubelt:2016}
Willy D{\"o}rfler, Hannes Gerner, and Roland Schnaubelt.
\newblock Local well-posedness of a quasilinear wave equation.
\newblock {\em Applicable Analysis}, 95(9):2110--2123, 2016.

\bibitem{GivoliBookChapter08}
Dan Givoli.
\newblock Computational absorbing boundaries.
\newblock In S.~Marburg and B.~Nolte, editors, {\em Computational Acoustics of
  Noise Propagation in Fluids}, chapter~5, pages 145--166. Springer-Verlag,
  Berlin Heidelberg, 2008.

\bibitem{Grisvard}
P.~Grisvard.
\newblock {\em Elliptic problems in nonsmooth domains}.
\newblock Pitman Advanced Pub. Program Boston, 1985.

\bibitem{BanksSmith1995}
Max~D. Gunzburger, editor.
\newblock {\em Active Control of Acoustic Pressure Fields Using Smart Material
  Technologies}, New York, NY, 1995. Springer New York.

\bibitem{FahrooWang1999}
Max~D. Gunzburger, editor.
\newblock {\em Active Control of Acoustic Pressure Fields Using Smart Material
  Technologies}, New York, NY, 1995. Springer New York.

\bibitem{Hagstrom1999}
Thomas Hagstrom.
\newblock Radiation boundary conditions for the numerical simulation of waves.
\newblock {\em Acta Numerica}, 8:47--106, 1999.

\bibitem{Jordan2014}
P.~M. Jordan.
\newblock Second-sound phenomena in inviscid, thermally relaxing gases.
\newblock {\em Discrete Contin. Dyn. Syst.}, 19(7):2189, 2014.

\bibitem{wellposednessBJ}
B.~Kaltenbacher.
\newblock {Well-posedness of a general higher order model in nonlinear
  acoustics}.
\newblock {\em Appl. Math. Lett.}, 63:21--27, 2017.

\bibitem{KL12_Neumann}
B.~Kaltenbacher and I.~Lasiecka.
\newblock {Well-posedness of the Westervelt and the Kuznetsov equation with
  nonhomogeneous Neumann boundary conditions}.
\newblock {\em Discrete Contin. Dyn. Syst. Suppl.}, pages 763--773, 2011.

\bibitem{KL12_Kuznetsov}
B.~Kaltenbacher and I.~Lasiecka.
\newblock {An analysis of nonhomogeneous Kuznetsov's equation: Local and global
  well-posedness; exponential decay}.
\newblock {\em Math. Nachr.}, 285(2-3):295--321, 2012.
\newblock DOI 10.1002/mana.201000007.

\bibitem{KLM12_MooreGibson}
B.~Kaltenbacher, I.~Lasiecka, and R.~Marchand.
\newblock {Wellposedness and exponential decay rates for the
  Moore-Gibson-Thompson equations arising in high intensity ultrasound.}
\newblock {\em Control Cybern.}, 2012.
\newblock (invited volume).

\bibitem{KLP12_JordanMooreGibson}
B.~Kaltenbacher, I.~Lasiecka, and M.A. Pospieszalska.
\newblock {Wellposedness and exponential decay of the energy in the nonlinear
  Jordan-Moore-Gibson-Thompson equation arising in high intensity ultrasound.}
\newblock {\em Math. Models Methods Appl. Sci.}, 22(11):1250035, 2012.

\bibitem{KLV11_inhomDir}
B.~Kaltenbacher, I.~Lasiecka, and S.~Veljovic.
\newblock {Well-posedness and exponential decay for the Westervelt equation
  with inhomogeneous Dirichlet boundary data}.
\newblock In {\em J. Escher et al (Eds): Parabolic Problems: Herbert Amann
  Festschrift}, pages 357--387. Birkhaeuser, Basel, 2011.
\newblock refereed (Progress in Nonlinear Differential Equations and Their
  Applications, Vol. 60).

\bibitem{b2zeroJMGT}
B.~Kaltenbacher and V.~Nikoli\'{c}.
\newblock {The inviscid limit of third-order linear and nonlinear acoustic
  equations}.
\newblock {\em SIAM J. Appl. Math.}, 81:1461--1482, 2021.
\newblock see also arXiv:2101.05488 [math.AP].

\bibitem{splittingModelsNonlinearAcoustics}
B.~Kaltenbacher and M.~Thalhammer.
\newblock {Fundamental models in nonlinear acoustics part I. Analytical
  comparison}.
\newblock {\em Math. Models Methods Appl. Sci.}, 28:2403--2455, 2018.
\newblock arxiv:1708.06099 [math.AP].

\bibitem{KL09Westervelt}
Barbara Kaltenbacher and Irena Lasiecka.
\newblock Global existence and exponential decay rates for the {W}estervelt
  equation.
\newblock {\em Discrete and Continuous Dynamical Systems?` Series S}, 2(3):503,
  2009.

\bibitem{btozero}
Barbara Kaltenbacher and Vanja Nikoli\'{c}.
\newblock {Parabolic approximation of quasilinear wave equations with
  applications in nonlinear acoustics}.
\newblock {\em SIAM Journal on Mathematical Analysis}, 54:1593--1622, 2022.
\newblock see also arXiv:2011.07360.

\bibitem{BK-Peichl}
Barbara Kaltenbacher and Gunther Peichl.
\newblock The shape derivative for an optimization problem in lithotripsy.
\newblock {\em Evol. Equ. Control Theory}, 5(3):399--429, 2016.

\bibitem{Kesavan2003}
S~Kesavan.
\newblock {\em Topics in Functional Analysis and Applications}.
\newblock New Age Publishers, 2003.

\bibitem{Kuznetsov71}
V.~Kuznetsov.
\newblock Equations of nonlinear acoustics.
\newblock {\em Soviet Physics-Acoustics}, 16(4):467--470, 1971.

\bibitem{LadyzhenskayaUraltseva}
O.~A. Ladyzhenskaya and N.~N. {Ural'tseva}.
\newblock {\em Linear and Quasilinear Equations of Elliptic Type}.
\newblock Izd. Nauka, Moscow, 1964. (Engl. Transl.: Acad. Press, New York,
  1968.).

\bibitem{Lasiecka2002}
Irena Lasiecka.
\newblock {\em Mathematical control theory of coupled PDEs}.
\newblock SIAM, 2002.

\bibitem{LasieckaRodrigues2021}
Irena Lasiecka and Jos\'{e}~H. Rodrigues.
\newblock Weak and strong semigroups in structural acoustic
  {K}irchhoff-{B}oussinesq interactions with boundary feedback.
\newblock {\em J. Differential Equations}, 298:387--429, 2021.

\bibitem{LischkeMeerschaert2020}
Anna Lischke, Guofei Pang, Mamikon Gulian, Fangying Song, Christian Glusa,
  Xiaoning Zheng, Zhiping Mao, Wei Cai, Mark~M. Meerschaert, Mark Ainsworth,
  and George~Em Karniadakis.
\newblock What is the fractional laplacian? a comparative review with new
  results.
\newblock {\em Journal of Computational Physics}, 404:109009, 2020.

\bibitem{Liu2022}
Yu-Xiang Liu.
\newblock Exact boundary controllability of the structural acoustic model with
  variable coefficients.
\newblock {\em Applicable Analysis}, 102(9):2524--2539, 2023.

\bibitem{MeyerWilke13}
S.~Meyer and M.~Wilke.
\newblock Global well-posedness and exponential stability for {K}uznetsov's
  equation in {$L_p$}-spaces.
\newblock {\em Evol. Eq. Control Theory}, 2:365--378, 2013.

\bibitem{MeyerWilke2011}
Stefan Meyer and Mathias Wilke.
\newblock Optimal regularity and long-time behavior of solutions for the
  {W}estervelt equation.
\newblock {\em Applied Mathematics \& Optimization}, 64(2):257--271, 2011.

\bibitem{MizohataUkai1993}
Kiyoshi Mizohata and Seiji Ukai.
\newblock The global existence of small amplitude solutions to the nonlinear
  acoustic wave equation.
\newblock {\em Journal of Mathematics of Kyoto University}, 33(2):505--522,
  1993.

\bibitem{MooreGibson1960}
F.~Moore and W.~Gibson.
\newblock Propagation of weak disturbances in a gas subject to relaxation
  effects.
\newblock {\em J. Aerospace Sci.}, 27(2):117--127, 1960.

\bibitem{Nikolic:2015}
Vanja Nikoli\'{c}.
\newblock Local existence results for the {W}estervelt equation with nonlinear
  damping and {N}eumann as well as absorbing boundary conditions.
\newblock {\em J. Math. Anal. Appl.}, 427(2):1131--1167, 2015.

\bibitem{Olson:1988}
L.G. Olson.
\newblock Finite element model for ultrasonic cleaning.
\newblock {\em Journal of Sound and Vibration}, 126(3):387--405, 1988.

\bibitem{Plum1992}
Michael Plum.
\newblock Explicit {$H_2$}-estimates and pointwise bounds for solutions of
  second-order elliptic boundary value problems.
\newblock {\em J. Math. Anal. Appl.}, 165(1):36--61, 1992.

\bibitem{RackeSaidHouari2020}
Reinhard Racke and Belkacem Said-Houari.
\newblock Global well-posedness of the cauchy problem for the 3d
  jordan--moore--gibson--thompson equation.
\newblock {\em Communications in Contemporary Mathematics}, 23(07):2050069,
  2023/07/27 2020.

\bibitem{Savare1997}
Giuseppe Savar\'{e}.
\newblock Regularity and perturbation results for mixed second order elliptic
  problems.
\newblock {\em Comm. Partial Differential Equations}, 22(5-6):869--899, 1997.

\bibitem{SimonettWilke2017}
G.~Simonett and M.~Wilke.
\newblock Well-posedness and longtime behavior for the {W}estervelt equation
  with absorbing boundary conditions of order zero.
\newblock {\em J. Evol. Eq.}, 17(1):551--571, Mar 2017.

\bibitem{Thompson1972}
Ph. Thompson.
\newblock {\em Compressible Fluid Dynamics}.
\newblock McGraw-Hill, New York, NY, 1972.

\bibitem{Westervelt63}
Peter~J Westervelt.
\newblock Parametric acoustic array.
\newblock {\em The Journal of the Acoustical Society of America},
  35(4):535--537, 1963.

\bibitem{YangYaoChen2018}
Fengyan Yang, Pengfei Yao, and Goong Chen.
\newblock Boundary controllability of structural acoustic systems with variable
  coefficients and curved walls.
\newblock {\em Mathematics of Control, Signals, and Systems}, 30(1):5, 2018.

\bibitem{zheng2004nonlinear}
Songmu Zheng.
\newblock {\em Nonlinear evolution equations}.
\newblock CRC Press, 2004.

\end{thebibliography}
\end{document}